\makeatletter \@addtoreset{equation}{section}
\newtheorem{theorem}{Theorem}[section]
\newtheorem{lemma}[theorem]{Lemma}
\newtheorem{proposition}[theorem]{Proposition}
\theoremstyle{remark}
\newtheorem{remark}[theorem]{Remark}
\newcommand{\mc}[1]{{\mathcal #1}}
\newcommand{\bb}[1]{{\mathbb #1}}
\newcommand{\<}{\langle}
\renewcommand{\>}{\rangle}
\newcommand{\eps}{\varepsilon}
\DeclareMathOperator{\divergence}{div}
\DeclareMathOperator{\Osc}{Osc}
\DeclareMathOperator{\TV}{TV}
\newcommand{\tdn}{{\bb T^d_n}}
\newcommand{\omn}{{\Omega_n}}
\newcommand{\etant}{{\eta^n(t)}}
\newcommand{\etanxt}{{\eta_x^n(t)}}
\newcommand{\epso}{{\varepsilon_0}}
\newcommand{\sumxy}{{\!\!\sum_{\substack{x, y \in \tdn\\x \sim y}}\!\!}}
\newcommand{\sumyx}{{\sum_{\substack{y \in \tdn\\y \sim x}}}}
\newcommand{\sumx}{{\sum_{x \in \tdn}}}
\newcommand{\sumy}{{\sum_{y \in \tdn}}}
\newcommand{\sumyz}{{{\!\!\sum_{\substack{y, z \in \tdn\\y \sim z}}\!\!}}}
\newcommand{\muss}{{\mu_{\mathrm{ss}}^n}}
\newcommand{\mussr}[1]{{\mu_{\mathrm{ss}}^{n,#1}}}
\newcommand{\fuss}{{f_{\mathrm{ss}}^n}}
\newcommand{\nurn}{{\nu_\rho^n}}
\newcommand{\rast}{{\rho_\ast}}
\newcommand{\nurnast}{{\nu_{\!\rho_\ast}^n}}
\newcommand{\nurnastr}[1]{{\nu_{\rho_\ast}^{n, #1}}}
\newcommand{\bareta}{{\bar{\eta}}}
\newcommand{\musspr}{{\mu_{\mathrm{ss}}^{n'}}}
\newcommand{\veta}{{\overset{\rightarrow}{\smash{\eta}\vphantom{{}_l}}}}
\newcommand{\vetax}[2]{{\veta^{#1}_{\!#2}}}
\newcommand{\vveta}{{\overset{\leftarrow}{\smash{\eta}\vphantom{{}_l}}}}
\newcommand{\vvetax}[2]{{\vveta^{#1}_{\!#2}}}
\begin{document}
	
\title[CLT for NESS of a reaction-diffusion model]{CLT for NESS of a reaction-diffusion model}

\author{P. Gon\c calves}
\address[P. Gon\c calves]{Center for Mathematical Analysis, Geometry and Dynamical Systems, Instituto Superior T\'{e}cnico, Universidade de Lisboa, 1049-001 Lisboa, Portugal}
\email{pgoncalves@tecnico.ulisboa.pt}

\author{M. Jara}
\address[M. Jara]{Instituto de Matem\'atica Pura e Aplicada, Estrada Dona Castorina 110, 22460-320
	Rio de Janeiro, Brazil}
\email{mjara@impa.br}

\author{R. Marinho}
\address[R. Marinho]{Universidade Federal de Santa Maria, Campus Cachoeira do Sul, Rod. Taufik Germano, 3013, 96503-205, Cachoeira do Sul, Brasil}
\email{rodrigo.marinho@ufsm.br}

\author{O. Menezes}
\address[O. Menezes]{UFBA, Instituto de Matem\'atica, Campus de Ondina, Av. Adhemar de Barros, S/N. CEP 40170-110, Salvador, Brazil
}
\email{ommenezes@gmail.com}

\begin{abstract}
We study the scaling properties of the non-equilibrium stationary states (NESS) of a reaction-diffusion model. Under a suitable smallness condition, we show that the density of particles satisfies a law of large numbers with respect to the NESS, with an explicit rate of convergence, and we also show that at mesoscopic scales the NESS is well approximated by a local equilibrium (product) measure, in the total variation distance. In addition, in dimensions $d \leq 3$ we show a central limit theorem (CLT) for the density of particles under the NESS. The corresponding Gaussian limit can be represented as an independent sum of a white noise and a massive Gaussian free field, and in particular it presents macroscopic correlations.
\end{abstract}

\keywords{exclusion process, non-equilibrium state, fluctuations,  SPDEs}

\maketitle

\section{Introduction}
\label{s1}

Non-equilibrium stationary states (NESS) describe the large-time behavior of stochastic interacting systems that are kept out of equilibrium by the action of external forces.
A main characteristic of NESS is the presence of \emph{steady flows}, which can manifest themselves as energy flows, particle flows or mass flows. NESS typically appear when a closed system is kept in contact with several \emph{reservoirs} with different thermodynamic parameters, such as temperature or chemical potential.

When stochastic interacting systems are modelled by Markov chains, the NESS have a simple probabilistic interpretation. An invariant, ergodic measure of a Markov chain is a NESS if it is not reversible with respect to the generator of the Markov chain.

In this article, we propose a general mathematical framework to describe the NESS of \emph{driven diffusive systems}. In order to keep the technical parts at an acceptable level, we consider one of the simplest models of driven diffusive systems, the so-called \emph{reaction-diffusion model} introduced in \cite{D-MFerLeb}, with \emph{quadratic} reaction term. This model has a single order parameter, the density of particles. Our main result is a description at the level of the central limit theorem (CLT) of the fluctuations of the density of particles with respect to the NESS. We show that, in dimension $d \leq 3$ and under a near-equilibrium condition, the scaling limit of these fluctuations is described by a Gaussian process, which is a mixture of a white noise and a massive Gaussian free field, and in particular it presents non-local spatial correlations. These non-local correlations are a signature of NESS, and their precise description is one of the main goals of a mathematical treatment of NESS. 

At the level of the law of large numbers and the large deviations principle, the so-called \emph{macroscopic fluctuation theory} (MFT) provides a fairly complete description of the fluctuations of the density of particles with respect to the NESS, see \cite{BerD-SGabJ-LLan, LanTsu}. However, at the level of the CLT, a description of the fluctuations is only available for a handful of models, \cite{Spo, LanMilOll, FGN, GonJarMenNeu}. Although we prove our main result only for an example of a reaction-diffusion model, we claim that our framework can be used to derive a CLT for NESS of general driven diffusive models in dimensions $d \leq 3$ under a near-equilibrium condition.~The restriction on the  dimension is necessary, but technical, while the near-equilibrium condition is necessary in the following sense. For reaction-diffusion models with cubic interactions, as the one considered in \cite{D-MFerLeb}, a phase transition appears at bifurcation points of the effective reaction function defined in \eqref{chanaral}. Since our methodology also applies for the model considered in \cite{D-MFerLeb}, it is natural to expect a restriction in terms of the parameters of the model. 

The proof of the CLT for NESS is based on an explicit estimate of the \emph{relative entropy} between the NESS and a product Bernoulli measure, valid for any dimension $d$. Our proof uses Yau's relative entropy method \cite{Yau}, recently improved in \cite{JarMen} and that yields quantitative estimates for the convergence results. In dimension $d=1$, the entropy estimate is uniform on the size of the system. In dimensions $d \geq 2$, the estimate is not uniform on the size of the system. As a consequence of our results we are also able to show that the fluctuations of the NESS are absolutely continuous with respect to a white noise for any $d\leq 3$. 

Since our entropy estimates are explicit, they allow  \emph{quantitative} estimates on the law of large numbers for the NESS in every dimension $d \geq 1$. As far as we know, these estimates are the first example in the literature of what we call \emph{quantitative hydrostatics}. Using the translation invariance of the NESS, we also show that in boxes of mesoscopic size, the structure of the NESS is indistinguishable from a Bernoulli product measure, a fact known in the literature as \emph{local equilibrium}. 

\subsection*{Outline} In Section \ref{s2} we present the reaction-diffusion model we consider here and we state our main results. The main objective of Section 3 is to present a complete proof of Theorem \ref{t2}, which is the estimate of the relative entropy of the NESS with respect to the Bernoulli product measure $\nurnast$, where $\rast$ is the unique zero in $[0,1]$  of the function $F$ defined in \eqref{chanaral}. The proof combines Yau's inequality, reviewed in Section \ref{s3.1}, with the log-Sobolev inequality, reviewed in Section \ref{s3.2}, and with  Lemma \ref{main}, introduced in \cite{JarMen} and called \emph{main lemma}. We present here a complete proof of the main lemma for two reasons. First, in our setting we only use translation-invariant reference measures, which makes the proof of Theorem \ref{main} easier to follow in our context. And second, we need to keep track of all constants in the estimates, so that later on we can tune the parameter $\lambda$ to make the constants in Lemma \ref{main} small enough. With the entropy estimate, the proof of Theorems \ref{t1} and \ref{loceq} can be completed.
In Section \ref{s4} we prove Theorem \ref{t3}. In order to prove it, we use the \emph{dynamical approach} introduced in \cite{BerD-SGabJ-LLan} and used in \cite{FarLanMou}
to derive a large deviations principle for the NESS of boundary-driven diffusive systems. In dimension $d=1$, the idea is simple. The entropy estimate of Theorem \ref{t2} implies tightness of the fluctuations of the density of particles around its hydrostatic limit. Moreover, any convergent subsequence satisfies the hypothesis of Theorem 2.4 in \cite{JarMen}. From these two facts, the proof of Theorem \ref{t3} follows as in \cite{GonJarMenNeu}. In dimensions $d=2, 3$, the entropy estimate of Theorem \ref{t2} is not good enough to imply tightness directly. Therefore, we use a more indirect approach, based on Duhamel's formula for solutions of the SPDE \eqref{SPDE}.

\section{Model and statement of results}
\label{s2}
Let $n \in \bb N$ be a scaling parameter and let $\tdn := \bb Z^d / n \bb Z^d$
be the discrete torus of size $n$ and dimension $d$. Let $\omn:= \{0,1\}^\tdn$ be
the state space of a Markov chain $(\etant; t  \geq 0)$ to be described below.
The elements $x \in \tdn$ are called \emph{sites} and the elements $\eta =
(\eta_x; x \in \tdn) \in \omn$ are called \emph{particle configurations}. We say
that the configuration $\eta \in \omn$ has a particle at site $x \in \tdn$ if
$\eta_x =1$. If $\eta_x =0$, we say that the site $x$ is \emph{empty}. 

For $x,y \in \tdn$ and $\eta \in \omn$, let $\eta^{x,y} \in \omn$ be given by
\[
 \eta^{x,y}_z :=
\left\{
\begin{array}{r @{\;;\;}l}
 \eta_y & z =x,\\
\eta_x & z =y,\\
\eta_z & z \neq x,y.
\end{array}
\right.
\]
In other words, $\eta^{x,y}$ is the particle configuration obtained from $\eta$
by exchanging the values of $\eta_x$
and $\eta_y$. For $f: \omn \to \bb R$ and $x,y \in \tdn$, let $\nabla_{\!x,y} f:
\omn \to \bb R$ be given by
\[
 \nabla_{\!x,y} f(\eta) := f(\eta^{x,y}) -f(\eta)
\]
for every $\eta \in \omn$. 

For $x \in \tdn$ and $\eta \in \omn$, let $\eta^x \in \omn$ be given by 
\[
 \eta^{x}_z :=
\left\{
\begin{array}{c @{\;;\;}l}
1-\eta_x & z =x,\\
\eta_z & z \neq x,
\end{array}
\right.
\]
that is, $\eta^x$ is the particle configuration obtained from $\eta$ by changing
the value of $\eta_x$. For $f: \omn \to \bb R$ and $x \in \tdn$, let $\nabla_x
f: \omn \to \bb R$ be given by
\[
\nabla_x f(\eta) := f(\eta^x) -f (\eta)
\]
for every $\eta \in \omn$. 

Let $x,y \in \tdn$. We say that $x \sim y$ if $|x_1-y_1|+\dots+|x_d-y_d|=1$.
Let $a,b > 0$ and $\lambda >-a$. For each $x \in \tdn$, let $c_x = c_x(a,b,\lambda,d): \omn \to
[0,\infty)$ be given by
\begin{equation}
\label{iquique}
c_x(\eta) := \Big(a+ \frac{\lambda}{2d} \sum_{\substack{y \in \tdn \\ y \sim x}}
\eta_y\Big) (1-\eta_x) +b \eta_x 
\end{equation}
for every $\eta \in \omn$. Observe that the conditions on $a,b$ and $\lambda$
imply that $c_x(\eta) \geq \epso >0$ for every $x \in \tdn$ and every $\eta \in
\omn$, where $\epso:= \min\{a, a+\lambda,b\}$. This condition makes the Markov chain defined
below \emph{irreducible}. Later on the parameter $\lambda$ will be chosen smaller than some constant $\lambda_c= \lambda_c(a,b,d)$.
For each $f: \omn \to \bb R$, let $L_n f : \omn \to \bb R$ be given by\footnote{Here and below, $\displaystyle{\sumxy}$ indicates a sum over \emph{unordered} pairs $x,y \in \tdn$.}
\begin{equation}
\label{tocopilla}
L_n f := n^2 \sumxy \nabla_{\!x,y} f + \sum_{x \in \tdn} c_x \nabla_x f.
\end{equation}
The linear operator $L_n$ defined in this way turns out to be the generator of a
Markov chain in $\omn$ that we denote by $(\etant; t \geq 0)$. The sequence
of chains $(\etant; t \geq 0)_{n \in \bb N}$ is an example of what is known in
the literature as a \emph{reaction-diffusion model}.

As pointed out above, under the condition $\epso >0$, the chain $(\etant ; t \geq 0)$ is irreducible,
and therefore it has a unique invariant measure that will be denoted by $\muss$. 
For $\lambda \neq 0$, the measure $\muss$ is not reversible with
respect to $(\etant ; t \geq 0)$. For this reason, we say that the measure $\muss$
is a \emph{non-equilibrium stationary state} (NESS). Our main goal is the
description of the scaling limits of the density of particles with respect to
the measures $(\muss; n \in \bb N)$. For the moment we will not be very specific
about what we understand by the density of particles; a proper definition will
be included where needed.

\subsection*{Notation} We will denote by $\bb P^n$ the law of $(\etant; t \geq
0)$ on the space $\mc D([0,\infty), \omn)$ of \emph{c\`adl\`ag} trajectories,
and we will denote by $\bb E^n$ the expectation with respect to $\bb P^n$.
Whenever we need to specify the initial law $\mu^n$ of the chain $(\etant; t \geq
0)$, we will use the notations $\bb P^n_{\!\mu^n}$, $\bb E^n_{\mu^n}$. 
We will denote by $C$ a finite and positive constant depending only on $a,b,d$, which may change from line to line.

\subsection{Hydrodynamic limit and hydrostatic limit} 

The generator $L_n$ combines an \emph{exclusion dynamics}, corresponding to the
operator $L_n^{\mathrm{ex}}$ given by
\[
L_n^{\mathrm{ex}} f:= n^2 \sumxy \nabla_{\!x,y} f,
\]
with a \emph{reaction dynamics}, corresponding to the operator $L_n^{\mathrm r}$
given by
\[
L_n^{\mathrm r} f := \sum_{x \in \tdn} c_x \nabla_x f.
\]
Observe that the reaction rates defined in \eqref{iquique} correspond to a
\emph{contact process} with an additional creation term avoiding absorption at
density zero. The reaction-diffusion model was introduced in \cite{D-MFerLeb}
with a reaction term corresponding to a stochastic Ising model. The key
observation of \cite{D-MFerLeb} is the following. In order that both parts of the
dynamics, the exclusion part and the reaction part, have a non-trivial effect
on the scaling limits of the density of particles, one must include a factor
$n^2$ in front of the exclusion part of the dynamics, as in the definition
of $L_n$ given in \eqref{tocopilla}. In \cite{D-MFerLeb} the authors derive the
so-called \emph{hydrodynamic limit} of the reaction-diffusion model, which we now describe. For $\rho \in [0,1]$, let $\nurn$ be the Bernoulli product measure
in $\omn$ of density $\rho$:
\[
\nurn(\eta) := \prod_{x \in \tdn} \big(  \eta_x \rho +(1-\eta_x)(1-\rho)\big)
\]
for every $\eta \in \omn$. Fix $x \in \tdn$ and let $F: [0,1] \to \bb R$ be
given by 
\begin{equation}
\label{chanaral}
F(\rho) := \int L_n \eta_x\, d \nurn
\end{equation}
for every $\rho \in [0,1]$. Since the operator $L_n$ and the measures $(\nurn;
\rho \in [0,1])$ are translation invariant, $F$ does not depend on $x$. Observe
that
\[
L_n \eta_x = n^2 \sumyx (\eta_y -\eta_x) + c_x(\eta) (1-2\eta_x)
      = n^2 \sumyx (\eta_y -\eta_x) + \Big(a+ \frac{\lambda}{2d} \sumyx \eta_y
\Big) (1-\eta_x) - b\eta_x.
\]
Therefore, 
\begin{equation}
\label{antofagasta}
F(\rho) = (a+\lambda \rho) (1-\rho) - b \rho 
\end{equation}
for every $\rho \in [0,1]$. We will also need to introduce the function $G: [0,1] \to \bb R$ given by
\begin{equation}
\label{atacama}
G(\rho) := (a+\lambda \rho) (1-\rho) + b \rho 
\end{equation}
for every $\rho \in [0,1]$. Observe that $G(\rho) = F(\rho+ 2b \rho$, although this relation is not relevant in what follows.

Let $\mc C(\bb T^d; \bb R)$ be the set of continuous functions $f: \bb T^d \to \bb R$ and 
let $(\pi_t^n; t \geq 0)$ be the family of measures defined by duality as
\[
\pi_t^n(f) := \frac{1}{n^d} \sumx \etanxt f\big(\tfrac{x}{n}\big)
\]
for every $f \in \mc C(\bb T^d; \bb R)$. We have that

\begin{proposition}[Hydrodynamic limit \cite{D-MFerLeb, KipOllVar}]
 \label{p1}
Let $(\mu^n; n \in \bb N)$ be a sequence of probability measures in $\omn$ and
let $u_0: \bb T^d \to [0,1]$ be a given measurable function. Assume that for every $f \in \mc C(\bb
T^d; \bb R)$,
\[
\lim_{n \to \infty} \frac{1}{n^d} \sumx \eta_x f \big( \tfrac{x}{n}\big) = \int_{\bb T^d}
u_0(x) f(x) dx
\]
in probability with respect to $(\mu^n; n \in \bb N)$. For every $f \in \mc
C(\bb
T^d; \bb R)$ and every $t \geq 0$,
\[
\lim_{n \to \infty} \pi_t^n(f) = \int_{\bb T^d}  u_t(x) f(x) dx
\]
in probability with respect to $\bb P^n_{\mu^n}$, where $(u_t; t \geq 0)$ is the solution of the
hydrodynamic equation
\begin{equation}
\label{ECHID}
\partial_t u = \Delta u + F(u)
\end{equation}
with initial condition $u_0$.
\end{proposition}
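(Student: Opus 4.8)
The plan is to follow the \emph{entropy method} of Guo--Papanicolaou--Varadhan, combined with a uniqueness argument for the limiting PDE, which is the route taken in \cite{D-MFerLeb, KipOllVar}. First I would set up the martingale decomposition. Since the action of $L_n$ on $\eta_x$ was computed in the lines preceding \eqref{antofagasta}, for each $f \in \mc C(\bb T^d; \bb R)$ the process
\[
M^n_t(f) := \pi_t^n(f) - \pi_0^n(f) - \int_0^t L_n \pi_s^n(f)\, ds
\]
is a mean-zero martingale, and summation by parts in the exclusion part gives
\[
L_n \pi_s^n(f) = \frac{1}{n^d}\sumx \big(n^2 \Delta_n f\big)\big(\tfrac{x}{n}\big)\, \eta_x^n(s) \;+\; \frac{1}{n^d}\sumx f\big(\tfrac{x}{n}\big)\, c_x(\eta^n(s))\big(1 - 2\eta_x^n(s)\big),
\]
where $\Delta_n f(\tfrac{x}{n}) := \sumyx \big(f(\tfrac{y}{n}) - f(\tfrac{x}{n})\big)$ is the discrete Laplacian and $h(\eta) := c_0(\eta)(1-2\eta_0)$ is a local function with $\int h\, d\nurn = F(\rho)$ by \eqref{chanaral}--\eqref{antofagasta}. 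Since $n^2 \Delta_n f \to \Delta f$ uniformly, the first term is $\pi_s^n(\Delta f)$ up to a negligible error.

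Next I would show the martingale is negligible. A direct computation of the carr\'e du champ shows that an exclusion swap across a bond changes $\pi^n(f)$ by $O(n^{-d-1})$ (at rate $n^2$, over $O(n^d)$ bonds) and a reaction flip changes it by $O(n^{-d})$ (at bounded rate, over $n^d$ sites), so that $\bb E^n\big[(M^n_t(f))^2\big] = O(n^{-d}) \to 0$ for every $d \geq 1$. Hence, asymptotically, $\pi_t^n(f) - \pi_0^n(f)$ equals the time integral of the drift above.

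The genuinely substantial step is tightness together with the \emph{replacement lemma}. Tightness of the laws of $(\pi_\cdot^n)$ on $\mc D([0,T], \mc M)$ follows from an Aldous-type criterion using the decomposition, the boundedness of the rates \eqref{iquique}, and $\eta \in \{0,1\}$. The heart of the matter is to close the reaction term: along the time integral one must replace
\[
\frac{1}{n^d}\sumx f\big(\tfrac{x}{n}\big)\, c_x(\eta)(1-2\eta_x) \;\approx\; \frac{1}{n^d}\sumx f\big(\tfrac{x}{n}\big)\, F\big(\bareta^{\eps n}_x\big),
\]
where $\bareta^{\eps n}_x$ is the empirical density in a box of side $\eps n$ about $x$. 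This is obtained from the one-block and two-block estimates, bounding the time-integrated error by the Dirichlet form and hence by the entropy production, letting $\eps \to 0$ after $n \to \infty$. I expect this to be the main obstacle, concentrated in the quadratic piece: the term $\eta_x \eta_y$ inside $c_x(\eta)(1-2\eta_x)$, coming from $\frac{\lambda}{2d}\sumyx \eta_y(1-\eta_x)$ in \eqref{iquique}, is the nonlinear contribution, and replacing $\eta_x\eta_y$ by $(\bareta^{\eps n}_x)^2$ is exactly where local equilibrium, i.e.\ the two-block estimate, is needed. By continuity of the polynomial $F$ and the assumed convergence of the initial empirical measure, $F(\bareta^{\eps n}_x) \to F(u_s(x))$.

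Finally I would identify the limit and conclude. Any limit point $Q$ concentrates on trajectories $\pi_t(dx) = u_t(x)\, dx$ with $u_t \in [0,1]$ (absolute continuity forced by $\eta \in \{0,1\}$) satisfying the weak formulation
\[
\int_{\bb T^d} u_t\, f\, dx = \int_{\bb T^d} u_0\, f\, dx + \int_0^t \!\! \int_{\bb T^d} \big(u_s\, \Delta f + F(u_s)\, f\big)\, dx\, ds
\]
for all smooth $f$, with the initial condition pinned down by the hypothesis on $(\mu^n)$. Because $F$ is smooth and $[0,1]$ is invariant under the dynamics, the semilinear equation \eqref{ECHID} has a unique bounded weak solution, so every limit point equals $u_\cdot$. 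Tightness together with a unique deterministic limit point then yields convergence in probability, which is the assertion of the proposition.
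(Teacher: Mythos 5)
The paper itself does not prove Proposition \ref{p1}: it is stated as a known result, with the proof delegated to the references \cite{D-MFerLeb, KipOllVar}, so there is no internal argument to compare yours against. Your proposal is a correct outline of one of the two classical proofs of this statement.

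What you describe is the Guo--Papanicolaou--Varadhan entropy method: the Dynkin decomposition, with the drift split after summation by parts into $\pi^n_s(n^2\Delta_n f)$ and the spatial average of the local function $h(\eta)=c_0(\eta)(1-2\eta_0)$, whose $\nurn$-mean is $F(\rho)$ by \eqref{chanaral}--\eqref{antofagasta}; the $O(n^{-d})$ bound on the martingale's second moment (correct: exclusion jumps move $\pi^n(f)$ by $O(n^{-d-1})$ at rate $n^2$ over $O(n^d)$ bonds, reaction flips move it by $O(n^{-d})$ at bounded rate over $n^d$ sites); the one-block/two-block replacement of $c_x(\eta)(1-2\eta_x)$ by $F(\bareta^{\eps n}_x)$, where you rightly locate the difficulty in the quadratic term $\tfrac{\lambda}{2d}\sum_{y\sim x}\eta_y(1-\eta_x)$; and uniqueness of bounded (mild) solutions of \eqref{ECHID} via Gronwall. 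Three routine points should be made explicit in a full write-up: (i) the martingale computation requires $f\in\mc C^2$, and mere continuity in the statement is recovered at the end by density together with the uniform bound $|\pi_t^n(f)|\le\|f\|_\infty$; (ii) $\nurn$ is not invariant for $L_n$ (the reaction part breaks invariance), so the entropy-production/Dirichlet-form bound feeding the one- and two-block estimates must absorb an extra term of order $n^d$ coming from $L_n^\ast\mathbf 1$ --- harmless, since only the exclusion Dirichlet form, which carries the factor $n^2$, is actually used; (iii) the initial entropy bound $H(\tfrac{d\mu^n}{d\nurn};\nurn)\le Cn^d$ is automatic on $\{0,1\}^{\tdn}$, so no extra hypothesis is needed. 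By contrast, the original proof in \cite{D-MFerLeb} has a different flavor: it exploits duality for the stirring (exclusion) part and controls a hierarchy of correlation functions ($v$-functions) to establish propagation of chaos, which yields quantitative correlation estimates but is tied rather rigidly to the specific structure of the rates; the entropy-method route you take, in the spirit of \cite{KipOllVar} and of the exposition in \cite{KipLan}, is more robust and essentially model-independent, which is why it is the standard reference argument for results of this type.
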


This result is known in the literature as the hydrodynamic limit of the
reaction-diffusion model $(\etant; t \geq 0)_{n \in \bb N}$. The hydrodynamic equation \eqref{ECHID} is an example of a \emph{reaction-diffusion equation}. The
intuition behind the definition of the function $F$ appearing in \eqref{ECHID}
is the following. If we look at the dynamics on a box of size $1 \ll \ell \ll
n$, the exclusion dynamics is much faster than the reaction dynamics; in fact,
the exclusion dynamics takes times of order $\mc O(\frac{\ell^2}{n^2})$ in order
to equilibrate the density of particles on a box of size $\ell$, and the  first
jump of the reaction dynamics happens after times of order $\ell^{-d}$.
Therefore, if $\ell \ll n^{\frac{2}{2+d}}$, the exclusion dynamics equilibrates
the density of particles between consecutive jumps of the reaction dynamics. Since the
Bernoulli product measures are invariant under the exclusion dynamics, it is reasonable to assume that 
at least on boxes of size $\ell \ll n^{\frac{2}{2+d}}$, the law of the process is close to product. Therefore it makes sense to define the function $F$ as the average reaction rate
with respect to $\nurn$. This intuition will be made rigorous for the invariant measure $\muss$ in Theorem \ref{loceq} below. Theorem \ref{loceq} also shows that the size $\ell$ of the boxes on which this product approximation is accurate is larger than $n^{\frac{2}{2+d}}$. 

Observe that $F(0) =a>0$, $F(1) =-b<0$ and $F$ is a polynomial of
degree $2$. Therefore, $F$ has a unique zero $\rast$ in $[0,1]$, which satisfies
$0< \rast < 1$ and $F'(\rast) <0$, from where we conclude that $\rast$ is also
stable. The density $\rast$ can be explicitly computed, but its exact formula plays no role in our proofs; we will
only use the property
\[
\lim_{\lambda \to 0} \rast = \frac{a}{a+b} \in (0,1).
\]
This limit can be directly verified from the explicit formula for $\rast$ or it can be deduced from
the fact that $F(\rho) \to a -\rho(a+b)$ as $\lambda \to 0$.  

In view of the stationarity properties of the hydrodynamic equation
\eqref{ECHID}, it is reasonable to postulate that, with respect to $\muss$, the
density of particles is close to $\rast$. However, a proof of this claim
requires a non-trivial exchange of limits. Such exchange of limits has been
justified in \cite{LanTsu}, elaborating over an idea introduced in
\cite{MouOrl, FarLanMou}:

\begin{proposition}[Hydrostatic limit]
 \label{p2}
For every $f \in \mc C(\bb T^d; \bb R)$, 
\[
\lim_{n \to \infty} \frac{1}{n^d} \sumx \eta_x f \big( \tfrac{x}{n} \big) = \int_{\bb T^d}
\rast f(x) dx
\]
in probability with respect to $\{\muss; n \in \bb N\}$.
\end{proposition}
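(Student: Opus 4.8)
The plan is to follow the \emph{dynamical} strategy of \cite{MouOrl, FarLanMou, LanTsu}, combining the hydrodynamic limit of Proposition \ref{p1}, the stationarity of $\muss$, and the long-time behaviour of the reaction-diffusion equation \eqref{ECHID}. Write $\pi_0^n := \frac1{n^d}\sumx \eta_x \delta_{x/n}$ for the empirical measure. Since $\pi_0^n$ takes values in the space of nonnegative measures on $\bb T^d$ with density bounded by $1$, which is compact for the weak topology, the family of laws $\{(\pi_0^n)_\ast\, \muss ; n \in \bb N\}$ is automatically tight. It therefore suffices to identify every subsequential limit $\mu$ of this family as the Dirac mass $\delta_{\rast\,dx}$ concentrated on the constant profile; convergence in distribution to a deterministic limit then upgrades to convergence in probability, which is the claim.

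Fix a subsequence along which $(\pi_0^n)_\ast\,\muss \to \mu$, and run the chain started from $\muss$. I would first establish tightness of the laws of the trajectories $(\pi_t^n; t \in [0,T])$ in path space, using the energy and martingale estimates underlying the proof of Proposition \ref{p1}. Any joint subsequential limit $Q$ of these trajectory laws is, by the usual martingale characterisation, concentrated on weak solutions of \eqref{ECHID}; since the initial condition enters only to select the solution and \eqref{ECHID} is well posed, $Q$ is the pushforward of its time-zero marginal $\mu$ under the solution semigroup $(S_t)_{t\geq0}$ of \eqref{ECHID}. On the other hand, stationarity of $\muss$ forces every time marginal of $Q$ to equal $\mu$. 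Comparing the two descriptions gives $\mu = (S_t)_\ast\,\mu$ for all $t \geq 0$, i.e.\ the limiting profile law is invariant under the PDE flow.

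I expect this exchange of limits to be the main obstacle. Proposition \ref{p1} is stated for \emph{deterministic} initial data and gives convergence in probability; upgrading it to the \emph{random} stationary initial condition cannot be done by citing Proposition \ref{p1} as a black box, and instead requires reproving tightness and the martingale characterisation at the level of trajectories started from $\muss$. This is precisely the non-trivial exchange of limits alluded to before the statement.

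It remains to show that the only flow-invariant law supported on $[0,1]$-valued profiles is $\delta_{\rast\,dx}$, and for this I would prove that $S_t u_0 \to \rast$ uniformly as $t \to \infty$ for every measurable $u_0 : \bb T^d \to [0,1]$. Since $F(0) = a > 0$, $F(1) = -b < 0$, and $\rast$ is the unique zero of $F$ in $[0,1]$ with $F'(\rast) < 0$, we have $\sign F(\rho) = \sign(\rast - \rho)$ throughout $[0,1]$. Comparing $u_t = S_t u_0$ with the spatially constant solutions $\underline u(t)$ and $\overline u(t)$ of the ODE $\dot v = F(v)$ started at $0$ and $1$, the comparison principle for \eqref{ECHID} yields $\underline u(t) \leq u_t \leq \overline u(t)$, and both bounds converge monotonically to $\rast$. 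Hence $S_t u_0 \to \rast\,dx$ for every admissible $u_0$, so the invariance $\mu = (S_t)_\ast\,\mu$ together with dominated convergence forces $\mu = \delta_{\rast\,dx}$. As this holds for every subsequential limit, the proof is complete.
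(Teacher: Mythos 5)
Your proposal is correct and is essentially the argument the paper itself relies on for this statement: Proposition \ref{p2} is not proved in the paper but is quoted from \cite{LanTsu} (elaborating on \cite{MouOrl, FarLanMou}), whose proof is exactly the dynamical scheme you outline --- automatic tightness of the empirical measure, trajectory-level hydrodynamics started from the stationary measure (the ``non-trivial exchange of limits''), flow-invariance of subsequential limits, and global attractivity of $\rast$ for \eqref{ECHID} via comparison with the ODE $\dot v = F(v)$. Note only that the paper's own new route to hydrostatics, Theorem \ref{t1} via the entropy bound of Theorem \ref{t2}, is quantitative but requires $\lambda \in [-\lambda_c,\lambda_c]$, so it does not supersede the dynamical argument in the full generality in which Proposition \ref{p2} is stated.
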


Our first result is a quantitative version of Proposition \ref{p2}. For $n,d \in
\bb N$, let $g_d(n)$ be defined as
\begin{equation}
\label{gd}
g_d(n):=
\left\{ 
\begin{array}{c@{\;;\;}r}
 n & d=1,\\
 \log n & d=2, \\
 1 & d \geq 3.
\end{array}
\right. 
\end{equation}
Observe that $g_d(n)$ corresponds to the order of magnitude of the Green's
function at $0$ of a simple symmetric random walk absorbed at the boundary of a
box of size $n$, centered at the origin.

\begin{theorem}
\label{t1}
 There exist $\lambda_c = \lambda_c(a,b,d)$ positive and $C=C(a,b,d)$ finite such
that for every $f \in \mc C(\bb T^d; \bb R)$, every $n \in \bb N$ and every
$\lambda \in [-\lambda_c,\lambda_c]$,
\[
\int \Big(\frac{1}{n^d} \sumx (\eta_x - \rast) f \big( \tfrac{x}{n} \big)
\Big)^2 \muss(d \eta) 
    \leq \frac{C g_d(n)}{n^2} \cdot \frac{1}{n^d} \sumx f
\big(\tfrac{x}{n}\big)^2.\]
\end{theorem}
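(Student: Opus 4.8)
My plan is to derive Theorem~\ref{t1} from the relative entropy estimate (Theorem~\ref{t2}) via the entropy inequality together with a sub-Gaussian concentration bound under the reference product measure. Since both sides of the claimed inequality are homogeneous of degree two in $f$, I may normalize $\frac{1}{n^d}\sumx f(\tfrac xn)^2 = 1$ and set
\[
W := \frac{1}{n^d} \sumx (\eta_x - \rast)\, f\big(\tfrac xn\big),
\]
so that the goal reduces to showing $\int W^2\, d\muss \leq C g_d(n)/n^2$. The decisive structural fact is that $W$ is \emph{centered} with respect to $\nurnast$, because $\int \eta_x\, d\nurnast = \rast$; this is exactly what makes $\nurnast$ the right measure against which to compare $\muss$.

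The main step is to transfer the second moment from $\muss$ to $\nurnast$ through the entropy inequality: for every $\gamma > 0$,
\[
\int W^2\, d\muss \;\leq\; \frac{1}{\gamma} \log \int e^{\gamma W^2}\, d\nurnast \;+\; \frac{1}{\gamma}\, H(\muss \mid \nurnast),
\]
where $H(\cdot \mid \cdot)$ denotes the relative entropy. This splits the problem into two independent tasks: an exponential-moment (concentration) estimate for $W^2$ under the \emph{product} measure, which is elementary, and the entropy bound furnished by Theorem~\ref{t2}, which carries all the analytic difficulty.

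For the concentration estimate I would use that under $\nurnast$ the coordinates $(\eta_x)_{x \in \tdn}$ are independent, so $W$ is a sum of independent, centered, bounded terms whose squared ranges add up to $n^{-2d}\sumx f(\tfrac xn)^2 = n^{-d}$. Hoeffding's inequality then shows that $W$ is sub-Gaussian under $\nurnast$ with variance proxy of order $n^{-d}$, whence there is a constant $c>0$ with $\int e^{\gamma W^2}\, d\nurnast \leq 2$ for all $\gamma \leq c\,n^{d}$, and in particular $\log \int e^{\gamma W^2}\, d\nurnast \leq C$ for such $\gamma$. Choosing $\gamma = c\,n^{d}$ and inserting the entropy estimate of Theorem~\ref{t2}, which in the relevant scaling I expect to read $H(\muss \mid \nurnast) \leq C\, n^{d-2} g_d(n)$ (uniform in $n$ precisely when $d=1$, and where the Green's-function order $g_d(n)$ appears), gives
\[
\int W^2\, d\muss \;\leq\; \frac{C}{n^d} + \frac{C\, n^{d-2} g_d(n)}{n^d} \;=\; \frac{C}{n^d} + \frac{C\, g_d(n)}{n^2}.
\]
Since $n^{-d} \leq g_d(n)/n^2$ for every $d \geq 1$, the first term (which is nothing but the product-measure variance of $W$) is absorbed into the second, and the theorem follows.

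The genuine obstacle is not this deduction but the entropy estimate of Theorem~\ref{t2} itself; that is where Yau's relative entropy method \cite{Yau}, the log-Sobolev inequality, and the main lemma of \cite{JarMen} enter, and where the near-equilibrium threshold $\lambda_c$ is needed, since the constants in the main lemma must be tuned small by choosing $|\lambda| \leq \lambda_c$. In the argument above, $\lambda_c$ plays no further role beyond guaranteeing that $\muss$ and $\nurnast$ are close enough for the entropy to obey the subcritical scaling $n^{d-2}g_d(n)$; granting that input, the passage to the quantitative hydrostatic bound is a short concentration computation.
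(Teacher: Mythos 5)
Your proposal is correct and follows essentially the same route as the paper: the paper proves Theorem \ref{t1} as the special case $\psi(\eta)=\eta_0$ of the more general Theorem \ref{t4}, whose proof is exactly your combination of Hoeffding's lemma under $\nurnast$, the sub-Gaussian Lemma \ref{subG} to control $\int e^{\gamma W^2}\,d\nurnast$, the entropy inequality \eqref{loa}, and the entropy bound of Theorem \ref{t2}. The only difference is that the paper additionally splits $\tdn$ into independent blocks to handle local functions $\psi$ supported in a box $B_R$, a step that is vacuous in your specialization, where the summands are already independent.
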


For every probability measure $\mu$ in $\Omega_n$ and every density $f$ with respect to $\mu$, let $H(f; \mu)$ denote the relative entropy of $f$ with respect to $\mu$:
\[
H(f; \mu) := \int f \log f d \mu.
\]
Theorem \ref{t1} is a consequence of the following estimate:

\begin{theorem}
\label{t2}
 Let $\fuss$ be the density of $\muss$ with respect to $\nurnast$. There exist  $\lambda_c = \lambda_c(a,b,d)$ positive and $C = C(a,b,d)$ finite such that
\[
H( \fuss ; \nurnast) \leq C  n^{d-2} g_d(n)
\]
for every $n \in \bb N$ and every $\lambda \in [-\lambda_c,\lambda_c]$.
\end{theorem}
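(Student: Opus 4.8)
The plan is to turn the stationarity of $\muss$ into a single inequality for its density $\fuss$, and then bound the relative entropy by the exclusion Dirichlet form via the logarithmic Sobolev inequality. Write $\nu := \nurnast$ and $D_n(\sqrt{\fuss}) := n^2 \sumxy \int (\nabla_{\!x,y}\sqrt{\fuss})^2\, d\nu$. Since $\fuss$ is the invariant density and $L_n \mathbf 1 = 0$, stationarity gives $L_n^* \fuss = 0$, whence $\int \fuss\, L_n(\log \fuss)\, d\nu = \int (L_n^* \fuss) \log \fuss\, d\nu = 0$ by adjointness. First I would split $L_n = L_n^{\mathrm{ex}} + L_n^{\mathrm r}$ and apply to each piece the elementary bound $r \log (s/r) \le -(\sqrt r - \sqrt s)^2 + (s-r)$ that underlies Yau's inequality (Section~\ref{s3.1}). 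Because $\nu$ is reversible for $L_n^{\mathrm{ex}}$ the linear remainders cancel and the exclusion part contributes exactly $-D_n(\sqrt{\fuss})$; for the reaction part a change of variables $\eta \mapsto \eta^x$ under the product measure turns the remainder into a potential, giving
\[
0 \;\le\; -D_n(\sqrt{\fuss}) \;+\; \int \sum_{x \in \tdn} W_x\, \fuss\, d\nu, \qquad W_x := c_x(\eta^x)\,\frac{\nu(\eta^x)}{\nu(\eta)} - c_x(\eta).
\]
Each $W_x$ is a local function supported on $x$ and its neighbours, and a one-line change of variables shows $\int W_x\, d\nu = 0$, so every term of the potential is centered.

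The crux is to absorb $\int \sum_x W_x \fuss\, d\nu$ back into $D_n(\sqrt{\fuss})$ with a remainder of the claimed order, which is exactly the role of Lemma~\ref{main}. I would decompose each centered $W_x$ into a piece lying in the range of the exclusion generator and a piece depending only on the empirical density near $x$. The first piece is estimated by an $H_{-1}$/variational bound against $D_n(\sqrt{\fuss})$, whose cost is governed by the diagonal of the inverse of the speeded-up exclusion generator; this diagonal is of order $g_d(n)/n^2$ (the order of the Green's function at $0$ divided by the $n^2$ speed-up), which is precisely where the factor $g_d(n)$ originates. The density-dependent piece is handled using that $\rast$ is the zero of $F$ in \eqref{chanaral}: this makes its value at density $\rast$ vanish, so that to leading order it is a genuine density fluctuation whose coefficient is of order $\lambda$. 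Summing the per-site estimates over the $n^d$ sites of $\tdn$ and invoking Lemma~\ref{main} yields, for any $\varepsilon > 0$,
\[
\Big| \int \sum_{x \in \tdn} W_x\, \fuss\, d\nu \Big| \;\le\; \big(\varepsilon + C|\lambda|\big)\, D_n(\sqrt{\fuss}) \;+\; C|\lambda|\, H(\fuss;\nu) \;+\; \frac{C}{\varepsilon}\, n^{d-2} g_d(n).
\]

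To close the argument I would first use the logarithmic Sobolev inequality of Section~\ref{s3.2}, whose constant for the symmetric exclusion process compensates the $n^2$ speed-up, in the form $H(\fuss;\nu) \le C_0\, D_n(\sqrt{\fuss})$. Feeding this into the previous display and combining with the stationarity inequality gives $D_n(\sqrt{\fuss}) \le (\varepsilon + C|\lambda|)\,D_n(\sqrt{\fuss}) + \tfrac{C}{\varepsilon} n^{d-2}g_d(n)$; choosing $\varepsilon$ small and then $\lambda_c$ small enough that $\varepsilon + C\lambda_c < 1$, the Dirichlet form on the right is absorbed and $D_n(\sqrt{\fuss}) \le C\, n^{d-2} g_d(n)$. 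A final application of the logarithmic Sobolev inequality then yields $H(\fuss;\nu) \le C_0 D_n(\sqrt{\fuss}) \le C\, n^{d-2} g_d(n)$, which is the assertion.

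The hard part, and the reason Lemma~\ref{main} is proved with all constants explicit, is the quantitative replacement estimate: obtaining the sharp Green's-function scaling $g_d(n)$ requires an $H_{-1}$ bound against the exclusion Dirichlet form with constants uniform in $n$, while simultaneously the density-dependent remainder must be shown to carry a coefficient that is genuinely $O(\lambda)$. Both the choice of $\rast$ as the zero of $F$ and the near-equilibrium threshold $\lambda_c$ enter precisely here, since the whole estimate closes only if the coefficient of $D_n(\sqrt{\fuss})$ generated by the potential can be driven strictly below $1$.
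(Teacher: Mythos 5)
There is a genuine gap at the closing step. The logarithmic Sobolev inequality you invoke, $H(\fuss;\nurnast)\le C_0\, D_n(\sqrt{\fuss})$ with $D_n$ the speeded-up \emph{exclusion} Dirichlet form and $\nurnast$ the grand-canonical product measure, is false, and no constant $C_0$ can make it true: the exclusion dynamics conserves the number of particles, so any density $f$ that is a function of $\sum_x\eta_x$ alone (for instance the normalized indicator of a fixed particle number) satisfies $D_n(\sqrt f)=0$ while $H(f;\nurnast)>0$. The $n^2$-compensated LSI for the symmetric exclusion process (Yau, Lu--Yau) holds only with respect to the \emph{canonical} measures on the hyperplanes $\{\sum_x\eta_x=k\}$; it can never control the entropy carried by the law of the conserved quantity. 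Note that the inequality of Section \ref{s3.2} (Lemma \ref{logso}) is stated for the \emph{reaction} carr\'e du champ $\Gamma_n^{\mathrm r}$, not for the exclusion one --- and that is precisely the point: the spin-flip part is ergodic on $\Omega_n$ and admits a product-measure LSI with $n$-independent constant, while the exclusion part does not. Since you discarded the term $-\int\Gamma_n^{\mathrm r}\sqrt{\fuss}\,d\nurnast$ at the very first step (keeping only $-D_n(\sqrt{\fuss})$), nothing remains in your inequality that can dominate $H(\fuss;\nurnast)$, and the absorption argument cannot be closed.

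The good news is that your stationary framework is salvageable and, once repaired, gives a legitimate alternative to the paper's dynamical argument (which runs Yau's inequality along the flow started from $\nurnast$ and closes with an integrating factor). Keep both negative terms in the stationary Yau inequality, $0\le -\int\Gamma_n^{\mathrm{ex}}\sqrt{\fuss}\,d\nurnast-\int\Gamma_n^{\mathrm r}\sqrt{\fuss}\,d\nurnast+\int L_n^\ast\mathbf 1\,\fuss\,d\nurnast$, and use Lemma \ref{main} in its full form, which splits the cost into $\tfrac14\int\Gamma_n^{\mathrm{ex}}\sqrt{\fuss}\,d\nurnast$, a term $C\kappa(\rast)\,\mc A\big(\tfrac{|\lambda|}{d\rast}\big)\int\Gamma_n^{\mathrm r}\sqrt{\fuss}\,d\nurnast$, and $C\,\mc A\big(\tfrac{|\lambda|}{d\rast}\big)\,n^{d-2}g_d(n)$. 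For $|\lambda|\le\lambda_c$ as in \eqref{laligua}, the net coefficient of $\int\Gamma_n^{\mathrm r}\sqrt{\fuss}\,d\nurnast$ is at most $-\tfrac12$, so stationarity yields $\int\Gamma_n^{\mathrm r}\sqrt{\fuss}\,d\nurnast\le C\,n^{d-2}g_d(n)$, and then Lemma \ref{logso} --- the LSI for the reaction part --- gives $H(\fuss;\nurnast)\le\kappa(\rast)\int\Gamma_n^{\mathrm r}\sqrt{\fuss}\,d\nurnast\le C\,n^{d-2}g_d(n)$. Everything else in your write-up (the stationarity identity, the centering of the $W_x$, and the claimed form of the replacement estimate, which is exactly \eqref{losvilos}) is sound; only the choice of which Dirichlet form does the absorbing, and which LSI is available for it, needs to be corrected.
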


\begin{remark}
 The value of $\lambda_c$ is the same in Theorems \ref{t1} and \ref{t2}, but the
value of $C$ can be different.
\end{remark}

Given two probability measures $\mu$, $\nu$ in $\Omega_n$, the relative entropy of $\mu$ with respect to $\nu$ is defined as $H( \frac{d \mu}{d \nu} ; \nu)$ if $\mu$ is absolutely continuous with respect to $\nu$ and $+\infty$ otherwise. Although relative entropy is not a distance (it is not even symmetric), it is widely used in the literature as a measure of closeness between probability measures. Another popular way to measure the closeness of two probability measures is through the \emph{total variation distance}.
The total variation distance between $\mu$ and $\nu$ is defined as
\[
d_{\TV}(\mu,\nu):= \frac{1}{2} \sum_{\eta \in \Omega_n} |\mu(\eta)-\nu(\eta)|.
\]
Total variation and relative entropy are related by \emph{Pinsker's inequality}:
\[
2 d_{\TV}(\mu ,\nu)^2 \leq  H \Big(\frac{d\mu}{d \nu} ; \nu \Big).
\]
Observe that the relative entropy bound of Theorem \ref{t2} is not strong enough to conclude that the measures $\muss$ and $\nurnast$ are close. This is actually expected, since we will see in Theorem \ref{t4} below that the scaling limits of the density of particles are different under $\muss$ and $\nurnast$. 
Nevertheless, we will prove that the bound of Theorem \ref{t2} is good enough to show a strong version of what is known in the literature as \emph{conservation of local equilibrium}, namely, that restricted to boxes of mesoscopic size, the measures $\muss$ and $\nurnast$ are close in total variation. In order to be precise, we need a few definitions.
Let $\mc P: \bb Z^d \to \tdn$ be the universal cover of $\tdn$.
For $R \in \bb N$, let $B_R := \{x \in \bb Z^ d; |x_i| \leq R, i=1,\dots,d\}$. For $n > 2R+1$, let $\Pi_R: \Omega_n \to \{0,1\}^{B_R}$ be the canonical projection: $(\Pi_R \eta)_x = \eta_{\mc P(x)}$ for every $x \in B_R$. Let $\mussr{R}$ be the push-forward of $\muss$ under $\Pi_R$ and let $\nurnastr{R}$ be the push-forward of $\nurnast$ under $\Pi_R$. We will prove the following result:

\begin{theorem}
\label{loceq}
If
\[
\frac{R_n g_d(n)^{1/d}}{n^{2/d}} \to 0 \text{ as }  n \to  \infty
\]
then
\[
d_{\TV} ( \mussr{R_n} , \nurnastr{R_n} ) \to 0 \text{ as } n \to \infty.
\]
\end{theorem}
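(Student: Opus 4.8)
The plan is to reduce the total variation statement about the mesoscopic marginals to the global relative entropy bound of Theorem \ref{t2}, via Pinsker's inequality together with a superadditivity argument for the entropy. First I would record two structural facts. The reference measure $\nurnast$ is a product measure over the sites of $\tdn$, so its restriction to any box is exactly the corresponding product marginal; and $\muss$ is translation invariant, since the generator $L_n$ commutes with the shifts of $\tdn$ (the exclusion part sums over all edges and the reaction rates $c_x$ are shift-covariant) and the invariant measure is unique. These two facts let me treat all translates of $B_R$ identically when comparing marginal entropies.

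The core estimate is the superadditivity of relative entropy with respect to a product reference measure: for disjoint sets $A_1, A_2 \subset \tdn$ and any measure $\mu$ on $\omn$ with marginals $\mu_{A_1}, \mu_{A_2}$, I have $H(\mu_{A_1 \cup A_2}; \nurnast) \geq H(\mu_{A_1}; \nurnast) + H(\mu_{A_2}; \nurnast)$, where each entropy is understood relative to the product marginal on the relevant set. This follows from the chain rule for relative entropy together with the joint convexity of $(\mu,\nu) \mapsto H$: writing $\mu = \mu_{A_1} \otimes \mu(\cdot \mid \eta_{A_1})$, the conditional-entropy term is bounded below by the marginal entropy $H(\mu_{A_2}; \nurnast)$ through Jensen's inequality. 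Iterating over a tiling of the torus, I partition $\tdn$ into $m^d$ disjoint translates of $B_R$ with $m = \lfloor n/(2R+1)\rfloor$, and translation invariance makes every block contribute the same marginal entropy $H(\mussr{R}; \nurnastr{R})$. Combined with Theorem \ref{t2} this yields $H(\mussr{R}; \nurnastr{R}) \leq m^{-d} H(\fuss; \nurnast) \leq C (2R+1)^d g_d(n)/n^2$, where I use that $R_n = o(n)$, forced by the hypothesis, to replace $m^{-d}$ by $C(2R+1)^d/n^d$ up to a multiplicative constant.

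Finally I would feed this into Pinsker's inequality to obtain $d_{\TV}(\mussr{R_n}, \nurnastr{R_n})^2 \leq \tfrac12 H(\mussr{R_n}; \nurnastr{R_n}) \leq C (2R_n+1)^d g_d(n)/n^2$, and observe that raising the hypothesis $R_n g_d(n)^{1/d}/n^{2/d} \to 0$ to the $d$-th power gives exactly $R_n^d g_d(n)/n^2 \to 0$, so the right-hand side vanishes and the theorem follows.

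The argument has essentially no serious obstacle once Theorem \ref{t2} is available: the only genuine input is the superadditivity lemma, which I would state and prove as a short self-contained claim. The one point requiring care is handling the remainder when $2R+1$ does not divide $n$; here the floor $m = \lfloor n/(2R+1)\rfloor$ costs only a harmless constant factor, since $R_n = o(n)$ guarantees $m \to \infty$ with $m \gtrsim n/(2R+1)$.
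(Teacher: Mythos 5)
Your argument is correct, and it takes a genuinely different route from the paper's. The paper never bounds the relative entropy of the marginals: it uses the variational characterization $d_{\TV}(\mu,\nu)=\tfrac12 \sup_{\|\psi\|_\infty\leq 1}\big|\int\psi\,d\mu-\int\psi\,d\nu\big|$, and for each $\psi$ supported in $B_{R_n}$ it exploits translation invariance of $\muss$ to rewrite the difference $\int\psi\,d\mussr{R_n}-\int\psi\,d\nurnastr{R_n}$ as $\int\Psi^n(\mathbf 1)\,d\muss$, the expectation of the spatially averaged field \eqref{lascruces}; the quantitative hydrostatics estimate \eqref{quintero} of Theorem \ref{t4} (Hoeffding bounds under $\nurnast$, the entropy inequality, and Theorem \ref{t2}) then gives $\big|\int\Psi^n(\mathbf 1)\,d\muss\big|^2\leq C(2R_n+1)^d g_d(n)\Osc(\psi)^2/n^2$, and taking the supremum over $\|\psi\|_\infty\leq 1$ concludes. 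You instead bound the marginal entropy directly: superadditivity of relative entropy over disjoint translates of $B_{R_n}$ — valid precisely because the reference measure is product, via the chain rule plus convexity of $H(\cdot\,;\nu)$, as you indicate — combined with translation invariance of $\muss$ and Theorem \ref{t2} gives $H(\mussr{R_n};\nurnastr{R_n})\leq C(2R_n+1)^d g_d(n)/n^2$, and Pinsker finishes. Both proofs consume exactly the same two inputs (Theorem \ref{t2} and translation invariance of $\muss$) and deliver the same rate $d_{\TV}^2\lesssim R_n^d g_d(n)/n^2$. What your route buys: it is more self-contained (no exponential moments, no appeal to Theorem \ref{t4}), and it proves the strictly stronger statement that the marginal relative entropy itself vanishes, from which total variation follows. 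What the paper's route buys: it recycles machinery (Theorem \ref{t4}) that is needed anyway for Theorem \ref{t1} and for Section \ref{s4}, avoiding a separate superadditivity lemma. Your treatment of the divisibility issue is also sound, since superadditivity only requires the blocks to be disjoint, not to tile the torus, and the hypothesis does force $R_n=o(n)$ in all dimensions.
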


This theorem states that on a window of mesoscopic size $R_n$, the NESS $\muss$ is asymptotically indistinguishable from a Bernoulli product measure of density $\rast$.

\subsection{Fluctuations and CLT for NESS} Observe that Theorem \ref{t1} can be
understood as a law of large numbers for the density of particles with respect
to $(\muss; n \in \bb N)$. Therefore, it is natural to study the fluctuations
 of the density of particles around its hydrodynamic limit. The \emph{density
fluctuation field} is the function $X^n : \omn \to \mc S'(\bb T^d)$ defined by
duality as
\[
X^n(\eta, f) : = \frac{1}{n^{d/2}} \sumx (\eta_x - \rast) f\big( \tfrac{x}{n} \big)
\]
for every $f \in \mc C^\infty(\bb T^d; \bb R)$ and every $\eta \in \omn$.
Although $X^n$ is well defined as a signed measure in $\bb T^d$, it is more
convenient to think about $X^n$ as a \emph{random distribution}. In a
probabilistic context, on which $\mc L^2$-norms are related to variances,
\emph{Sobolev spaces} are a specially convenient choice of topology for the
image of $X^n$. It will be convenient to define Sobolev spaces in terms of Fourier transforms. 
For $f \in \mc L^1(\bb T^d)$, let $\hat{f}: \bb Z^d \to \bb C$
be given by
\begin{equation}
\label{calama}
\hat{f} (k) := \int_{\bb T^d} e^{-2 \pi i k x} f(x) dx
\end{equation}
for every $k \in \bb Z^d$, that is, $\hat f$ is the \emph{Fourier transform} of
$f$. For $k=(k_1,\dots k_d) \in \bb Z^d$, let us write $\|k\| := (k_1^2+\dots k_d^2)^{1/2}$. For $f \in \mc C^\infty(\bb T^d; \bb R)$ and $m \in \bb N$, let
\[
\|f\|_{\mc H^m} := \Big( \sum_{k \in \bb Z^d} |\hat f (k)|^2 (1+
\|k\|^2)^m\Big)^{1/2}.
\]
Observe that $\|f\|_{\mc H^m}<+\infty$ for every $m \in \bb N$ and every $f \in
\mc C^\infty(\bb T^d; \bb R)$, and observe that the space $(C^\infty(\bb T^d; \bb
R),\|\cdot\|_{\mc H^m})$  is pre-Hilbert. The so-called \emph{Sobolev space} $\mc H^m=\mc
H^m(\bb T^d)$ of order $m$ is defined as the closure of $\mc C^\infty(\bb T^d;
\bb R)$ with respect to $\|\cdot\|_{\mc H^m}$. All the spaces $\mc H^m$ are
Hilbert spaces, and by Parseval's identity, $\mc H^0 = \mc L^2(\bb T^d)$.
Moreover, for every $m \neq 0$, the spaces $\mc H^m, \mc L^2(\bb T^d)$ and $\mc
H^{-m}$ form a \emph{Gelfand triple}. Observe as well that $\mc H^{m'}$ is
compactly contained in  $\mc H^m$ if $m < m'$. Since the Dirac $\delta$ distribution belongs to $\mc H^{-m}$ for every $m > d/2$, we see that
$X^n$ is a random variable in $\mc H^{-m}$ for every $m > d/2$.

Let $\chi(\rast) := \rast(1-\rast)$ be the \emph{mobility} of the reaction-diffusion model. This quantity is one of the thermodynamic variables appearing in MFT  \cite{BerD-SGabJ-LLan2}.
Now we can state our main result:

\begin{theorem}
 \label{t3}
Let $d \leq 3$ and let $\eta$ have law $\muss$. There exists $\lambda_c=\lambda_c(a,b,d) >0$ such that for every
$\lambda \in [-\lambda_c,\lambda_c]$,
\[
\lim_{n \to \infty} X^n = X_\infty
\]
in law with respect to the topology of $\mc H^{-m}$ for
\[
m >
\left\{
\begin{array}{c@{\;;\;}l}
1/2 & d=1,\\
3 & d=2,\\
9/2 & d =3,
\end{array}
\right.
\]
where $X_\infty$ is a centered Gaussian process of variance given by
\begin{equation}
\label{caldera}
\bb E[X_\infty(f)^2] =  \sum_{k \in \bb Z^d} \big|\hat{f}(k)\big|^2 
\Big(\chi(\rast) + \frac{G(\rast) + 2 F'(\rast) \chi(\rast)}{8 \pi^2 \|k\|^2 - 2 F' (\rast)}\Big).
\end{equation}
for every $f \in \mc C^\infty(\bb T^d; \bb R)$.
\end{theorem}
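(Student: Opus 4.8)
The plan is to use the \emph{dynamical approach}: rather than analysing the static measure $\muss$ directly, I exploit that it is the invariant measure of the dynamics generated by $L_n$. Let $(\etant; t \geq 0)$ start from $\muss$ and let $X_t^n$ be the associated density fluctuation field, so that by stationarity $X_t^n$ has the same law as $X^n = X_0^n$ for every $t \geq 0$. The first step is the Dynkin martingale decomposition
\[
X_t^n(f) = X_0^n(f) + \int_0^t L_n X_s^n(f)\, ds + M_t^n(f),
\]
and the expansion $L_n X_s^n(f) = n^{-d/2}\sumx (L_n \eta_x) f(x/n)$. Summing the exclusion part by parts produces the discrete Laplacian $\Delta_n f \to \Delta f$, while a Taylor expansion of the reaction term $c_x(\eta)(1-2\eta_x)$ around $\rast$, using $F(\rast)=0$ and that the neighbour average acts asymptotically as the identity at the level of the field, yields the linear contribution $F'(\rast) X_s^n(f)$ plus a genuinely quadratic fluctuation term $-\tfrac{\lambda}{2d}\sumyx (\eta_x-\rast)(\eta_y-\rast)$. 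Thus, up to this quadratic correction, $X^n$ is a discretisation of the stationary solution of the linear SPDE
\[
\partial_t X = \Delta X + F'(\rast) X + \sqrt{2\chi(\rast)}\,\nabla\!\cdot\xi + \sqrt{G(\rast)}\,\tilde\xi ,
\]
where $\xi,\tilde\xi$ are independent space-time white noises coming, respectively, from the exclusion and reaction clocks.

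Next I would compute the quadratic variation $\langle M^n(f)\rangle_t$, which splits as a sum over the exclusion bonds and the reaction sites. The exclusion part equals $n^{2-d}\sumxy (f(y/n)-f(x/n))^2 (\eta_x-\eta_y)^2$, and replacing $(\eta_x-\eta_y)^2$ by its average $2\chi(\rast)$ gives the limit $2\chi(\rast)\int_{\bb T^d}|\nabla f|^2$; the reaction part equals $n^{-d}\sumx c_x(\eta) f(x/n)^2$, and since $\int c_x\, d\nurnast = G(\rast)$ by \eqref{atacama}, this converges to $G(\rast)\int_{\bb T^d} f^2$. The replacement of these local functions by their averages is precisely where Theorem \ref{t2} and the local-equilibrium statement of Theorem \ref{loceq} enter. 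Solving the linear SPDE mode by mode in Fourier, each coefficient $\hat X_t(k)$ is an Ornstein--Uhlenbeck process with relaxation rate $4\pi^2\|k\|^2 - F'(\rast)$ and noise intensity $8\pi^2\chi(\rast)\|k\|^2 + G(\rast)$, whose stationary variance is
\[
\frac{8\pi^2\chi(\rast)\|k\|^2 + G(\rast)}{8\pi^2\|k\|^2 - 2F'(\rast)} = \chi(\rast) + \frac{G(\rast)+2F'(\rast)\chi(\rast)}{8\pi^2\|k\|^2 - 2F'(\rast)};
\]
summing against $|\hat f(k)|^2$ reproduces exactly \eqref{caldera}, which also explains the advertised decomposition into a white noise and a massive Gaussian free field.

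The hard part is controlling the quadratic reaction term $-\tfrac{\lambda}{2d}\sumyx(\eta_x-\rast)(\eta_y-\rast)$, i.e.\ proving a Boltzmann--Gibbs principle, in dimensions $d=2,3$, where by Theorem \ref{t2} the relative entropy of $\muss$ grows like $n^{d-2}g_d(n)$ and is \emph{not} uniform in $n$. In $d=1$ the entropy bound is uniform, tightness of $(X^n_t)$ in $\mc D([0,T];\mc H^{-m})$ follows, every limit point satisfies the hypotheses of Theorem 2.4 in \cite{JarMen}, and the argument closes as in \cite{GonJarMenNeu}. In $d=2,3$ I would instead follow the Duhamel route of \cite{BerD-SGabJ-LLan, FarLanMou}: write the mild formulation
\[
X_t^n = P_t^n X_0^n + \int_0^t P_{t-s}^n\, dM_s^n + \int_0^t P_{t-s}^n\, \mc E_s^n\, ds ,
\]
where $P_t^n$ is the semigroup generated by the discrete operator $\Delta_n + F'(\rast)$ and $\mc E_s^n$ collects the quadratic error. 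Using stationarity to represent $X_0^n$ as a stochastic integral over $(-\infty,0]$, the smoothing of $P_t^n$, combined with the Green's-function bounds behind Theorem \ref{t2} (whose right-hand side carries the order $g_d(n)$ of \eqref{gd}), is used to show that the contribution of $\mc E_s^n$ to the characteristic function of $X^n(f)$ vanishes as $n\to\infty$, even though the entropy is not uniform. This is the step I expect to be most delicate, and it is the source both of the restriction $d\leq 3$ and of the explicit negative Sobolev exponents $m$ in the statement.

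Finally, tightness of $(X^n)$ in $\mc H^{-m}$ for the stated $m$ would follow from second-moment bounds on $\|X^n\|_{\mc H^{-m}}$, obtained by combining the entropy estimate with a priori Gaussian-type bounds on the two-point correlations of $\muss$; and the uniqueness of the stationary Gaussian solution of the limiting Ornstein--Uhlenbeck dynamics identifies the limit $X_\infty$ as the centered Gaussian field with covariance \eqref{caldera}, completing the proof.
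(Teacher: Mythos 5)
Your overall route coincides with the paper's: in $d=1$ the argument you sketch (uniform entropy bound, tightness, Theorem 2.4 of \cite{JarMen}, uniqueness of the stationary solution of \eqref{SPDE}) is exactly the proof given there, and for $d=2,3$ your mild/Duhamel formulation is also the paper's device, implemented via Dynkin martingales with time-dependent test functions $g_{t,T}=P_{T-t}f$ and a cut-off time $T_n$ as in \eqref{tn} chosen so that the semigroup kills the (entropy-sized) initial datum. Your mode-by-mode Ornstein--Uhlenbeck computation of the stationary variance reproduces \eqref{sanfelipe}. However, the two steps you leave as expectations are precisely the core of the proof, and as stated they contain genuine gaps.

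First, the quadratic term. ``Smoothing of $P_t^n$ combined with the Green's-function bounds behind Theorem \ref{t2}'' cannot control the time-integrated quadratic error $I_T^n(V;f)$: under $\bb P^n_{\muss}$ this term admits no useful second-moment bound, and semigroup smoothing does not help because $V(\vec g_{t,T})$ is a sum of order $n^d$ degree-two monomials in $\bareta$ with coefficients of order one. The paper's actual mechanism is a three-step variational argument: transfer from $\bb P^n_{\muss}$ to $\bb P^n_{\nurnast}$ by the entropy inequality \eqref{loa}, paying $H(\muss|\nurnast)\le Cn^{d-2}g_d(n)$; bound the resulting exponential moment by the Feynman--Kac estimate of Proposition \ref{Feyn}, which is the point where non-reversibility enters through the correction $\tfrac12 L_n^\ast\mathbf 1$; and absorb $V$ into the carr\'e du champ via the main lemma, Theorem \ref{main}, using once more the smallness of $\lambda$. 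The output is only an $L^1$ bound, \eqref{est4}, of order $\|f\|_\infty\, g_d(n)\, n^{d/2-2}(1+T)$, and the restriction $d\le 3$ is exactly the condition $g_d(n)n^{d/2-2}\to 0$ with $T=T_n$ logarithmic in $n$. None of this is recoverable from your sketch.

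Second, tightness and identification in $d=2,3$. Your final paragraph is circular: Theorem \ref{t1} gives $\bb E_{\muss}[X^n(f)^2]\le C g_d(n) n^{d-2}\|f\|_{\ell^2_n}^2$, which diverges for $d=2,3$, and the ``a priori Gaussian-type bounds on the two-point correlations of $\muss$'' you invoke are not available --- obtaining them is essentially equivalent to the theorem being proved. The paper instead proves tightness term by term in the decomposition \eqref{quilpue}; since the quadratic term is controlled only in $L^1$, it is handled by a Markov inequality distributed over Fourier modes with weights $p(k)\asymp(1+\|k\|^2)^{-d/2+\delta}$, and this bookkeeping is exactly what produces the exponents $m>3$ for $d=2$ and $m>9/2$ for $d=3$ in the statement; your proposal has no mechanism generating them. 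Relatedly, in $d=2,3$ the limit cannot be identified through ``uniqueness of the stationary solution of the limiting dynamics'', because convergence of the dynamics (Proposition \ref{noneq}) is unavailable there; the paper reduces $X^n_{T_n}(f)$ to the martingale $\mc M^n_{T_n,T_n}(f)$, applies the martingale CLT (Proposition \ref{p3}) on windows $[T_n-T,T_n]$, and uses Theorem \ref{t4} --- not Theorem \ref{loceq} --- for the replacement of $c_x$ and $(\eta_x-\eta_y)^2$ by their means in the quadratic variation.
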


\begin{remark}
Observe that if $\lambda =0$, then $X_\infty$ is a white noise of variance $\chi(\rast)$.
\end{remark}

\section{The relative entropy method}
\label{s3}
In this section we will prove Theorem \ref{t2} and we will use it to prove
Theorem \ref{t1}. We will use Yau's \emph{relative entropy method}, introduced
in \cite{Yau}; see Chapter 6 of \cite{KipLan} for a review. We will use the
approach of \cite{JarMen}.

\subsection{Yau's inequality}
\label{s3.1}
The \emph{carr\'e du champ} associated to the operator $L_n$ is the bilinear
operator $\Gamma_n$ given by
\[
\Gamma_n (f,g) := L_n(fg) - f L_n g - g L_n f
\]
for every $f,g : \omn \to \bb R$. As usual in functional analysis, we will use
the notation $\Gamma_n f:= \Gamma_n(f,f)$.
We will also define the \emph{carr\'es du champ} associated to the operators
$L_n^{\mathrm{ex}}$, $L_n^{\mathrm{r}}$:
\[
\Gamma_n^{\mathrm{ex}} (f,g) := L_n^{\mathrm{ex}}(fg) - fL_n^{\mathrm{ex}} g - g
L_n^{\mathrm{ex}}f,
\]
\[
\Gamma_n^{\mathrm{r}}(f,g) := L_n^{\mathrm{r}} (fg) -f
L_n^{\mathrm{r}} g - g L_n^{\mathrm{r}} f
\]
for every $f,g: \omn \to \bb R$. 

Let $L_n^\ast$ be the adjoint of $L_n$ with respect to $\mc L^2(\nurnast)$.
Observe that $\nurnast$ is reversible under $L_n^{\mathrm{ex}}$, that is,
$(L_n^{\mathrm{ex}})^\ast = L_n^{\mathrm{ex}}$. Therefore, $L_n^\ast = L_n^{\mathrm{ex}} +
(L_n^{\mathrm{r}})^\ast$. The operator $(L_n^{\mathrm{r}})^\ast$ can be computed
explicitly in terms of $\nurnast$:
\begin{equation}
\label{vallenar}
(L_n^{\mathrm{r}})^\ast f(\eta) = \sumx \Big( c_x(\eta^x)
\frac{\nurnast(\eta^x)}{\nurnast(\eta)}f(\eta^x) - c_x(\eta) f(\eta)\Big).
\end{equation}
A more explicit form can be obtained observing that
\[
\frac{\nurnast(\eta^x)}{\nurnast(\eta)} = \frac{\eta_x(1-\rast)}{\rast} +
\frac{(1-\eta_x)\rast}{1-\rast}.
\]
We will use this identity to compute $L_n^\ast
\mathbf{1}$, where $\mathbf{1}$ is the constant function equal to $1$.

Since we already know that the density of particles under $\muss$ is
approximately equal to $\rast$, it is reasonable to start the chain $(\etant; t
\geq 0)$ from the initial measure $\nurnast$. Let $f_t^n$ be the density with respect to $\nurnast$ of the
law of $\etant$ under $\bb P_{\nurnast}^n$. Let
\[
H_n(t) := H (f_t^n ; \nurnast )
\]
be the \emph{relative entropy} of the law of $\etant$ under $\bb P_{\nurnast}^n$
with respect to $\nurnast$. The so-called \emph{Yau's inequality} \cite{Yau, JarMen}, states that
\begin{equation}
\label{Yau}
H_n'(t) \leq - \int \Gamma_n \sqrt{f_t^n} d \nurnast + \int L_n^\ast \mathbf{1}
f_t^n d \nurnast.
\end{equation}
Since the chain $(\etant; t \geq 0)$ is irreducible and the state space $\Omega_n$ is finite,  $f_t^n \to \fuss$ as $t \to
\infty$, and in particular, since the function $x\to x\log(x)$ is continuous, we have 
\[
H( \fuss ; \nurnast) = \lim_{t \to \infty} H_n(t).
\]
Therefore, a uniform bound on $H_n(t)$ implies a bound on the relative entropy
of $\muss$ with respect to $\nurnast$. For every $\eta \in \Omega_n$ and every $x \in \tdn$, let us define
\[
\bareta_x := \eta_x - \rast.
\]
From \eqref{vallenar}, 
\begin{equation}
\label{incahuasi}
\begin{split}
 L_n^\ast \mathbf{1} 
    &= \sumx \Big\{\eta_x \Big( \Big( a+ \frac{\lambda}{2d} \sumyx \eta_y \Big)
\frac{1-\rast}{\rast} -b\Big) +(1-\eta_x) \Big( \frac{b \rast}{1-\rast}
- \Big(a+\frac{\lambda}{2d} \sumyx \eta_y\Big)\Big) \Big\}\\
    &= \sumx \Big( \frac{\eta_x}{\rast} - \frac{1-\eta_x}{1-\rast} \Big)
\Big(\Big(a+\frac{\lambda}{2d}\sumyx \eta_y\Big)(1-\rast) -b \rast \Big) \\
    &= \sumx \frac{\bareta_x}{\chi(\rast)} \Big( F(\rast) + \frac{\lambda}{2d} \sumyx \bareta_y (1-\rast)
\Big)
\\
    &= \frac{\lambda}{2d \rast}  \sumxy \bareta_x \bareta_y,
\end{split}
\end{equation}
where in the last identity we used that  $F(\rast) =0$. Observe that $L_n^\ast \mathbf{1}$
is a sum of monomials of degree (at least) two in the centered variables
$(\bareta_x ; x \in \tdn)$. The absence of monomials of degree $1$ in the expression for  $L_n^\ast \mathbf{1}$ is
fundamental in what follows.

\subsection{The log-Sobolev inequality}
\label{s3.2}
In order to take full advantage of Yau's inequality \eqref{Yau}, it would be useful to have a lower bound for the integral $\int \Gamma_n \sqrt{f_t^n} d \nurnast$ in terms of the relative entropy $H_n(t)$. This is exactly the content of the so-called \emph{log-Sobolev inequality}, which we now explain. Let $L$ be a Markov generator on $\Omega_n$ and let $\Gamma$ be the \emph{carr\'e du champ} associated to $L$. The \emph{log-Sobolev constant} of $L$ (or $\Gamma$) with respect to a measure $\mu$ in $\Omega_n$, is defined as
\begin{equation}
\label{logsoconst}
\alpha = \alpha ( \Gamma; \mu) := \inf \frac{\int \Gamma \sqrt f d \mu}{ H ( f; \mu)},
\end{equation}
where the infimum runs over all densities $f$ with respect to $\mu$. It can be shown that $\alpha$ is always finite and that if $L$ is irreducible, then $\alpha >0$. If $L$ is irreducible and $\mu$ is equal to the invariant measure of $L$, then $\alpha$ can be used to estimate the speed of convergence to $\mu$ of the law of the chain generated by $L$, see \cite{DiaS-C} for a review. 

For the generator $L_n$ defined in \eqref{tocopilla} and the measure $\nurnast$, the log-Sobolev constant is bounded from below by a positive constant that does not depend on $n$, a result known in the literature as the \emph{log-Sobolev inequality}:

\begin{lemma}[log-Sobolev inequality]
\label{logso}
For every $n \in \bb N$, every $\rho \in (0,1)$, every $a,b >0$ and every $\lambda >-a$,
\[
\alpha( \Gamma_n^{\mathrm{r}}; \nurn) \geq \frac{ \eps_0 |1-2\rho|}{2\rho(1-\rho)\big|\log \frac{\rho}{1-\rho}\big|}= : \kappa(\rho)^{-1},
\]
 where $\eps_0:= \min\{a,a+\lambda, b\}$.
\end{lemma}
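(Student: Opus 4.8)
The plan is to reduce the claim to a single-site (two-point) estimate, exploiting that the reaction part of the dynamics is a sum of independent spin flips and that $\nurn$ is a product measure. First I would compute the carré du champ of $L_n^{\mathrm r}$. Since $L_n^{\mathrm r} f = \sumx c_x \nabla_x f$ and each $\nabla_x$ flips only the coordinate at $x$, a direct computation gives
\[
\Gamma_n^{\mathrm r}(f,f) = L_n^{\mathrm r}(f^2) - 2f L_n^{\mathrm r} f = \sumx c_x (\nabla_x f)^2 .
\]
Evaluating at $\sqrt f$ and using the pointwise bound $c_x(\eta) \geq \epso$ recorded below \eqref{iquique}, I obtain
\[
\int \Gamma_n^{\mathrm r} \sqrt f \, d\nurn \;\geq\; \epso \sumx \int (\nabla_x \sqrt f)^2 \, d\nurn .
\]
It therefore suffices to prove the model-independent bound $\sumx \int (\nabla_x \sqrt f)^2 \, d\nurn \geq C(\rho)^{-1} H(f;\nurn)$ with $C(\rho) := \frac{2\rho(1-\rho)\big|\log\frac{\rho}{1-\rho}\big|}{|1-2\rho|}$, since taking the infimum over densities $f$ then yields $\alpha(\Gamma_n^{\mathrm r};\nurn) \geq \epso/C(\rho) = \kappa(\rho)^{-1}$, which is exactly the claim. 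Note that this argument only needs a \emph{lower} bound on the Dirichlet integral, so it is irrelevant that $\nurn$ is neither invariant nor reversible for $L_n^{\mathrm r}$.

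Second, I would tensorize using the subadditivity of entropy for product measures. Writing $\nurn = \bigotimes_{x \in \tdn} \mu_\rho$, where $\mu_\rho$ is the Bernoulli($\rho$) law on $\{0,1\}$, and $\mathrm{Ent}_\mu(g) := \int g \log g \, d\mu - \big(\int g \, d\mu\big)\log\big(\int g \, d\mu\big)$ for $g \geq 0$ (so that $H(f;\nurn) = \mathrm{Ent}_{\nurn}(f)$ for a density $f$), subadditivity gives
\[
H(f;\nurn) \;\leq\; \sumx \int \mathrm{Ent}_{\mu_\rho}^{\,x}(f) \, d\nurn ,
\]
where $\mathrm{Ent}_{\mu_\rho}^{\,x}(f)$ denotes the entropy of the map $\eta_x \mapsto f(\eta)$ with respect to $\mu_\rho$, all other coordinates being frozen. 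This localizes the estimate to a single site, uniformly over the frozen environment.

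Third — and this is the only genuinely delicate point — I would invoke the logarithmic Sobolev inequality on the two-point space $\{0,1\}$ equipped with $\mu_\rho$: for every nonnegative $g$,
\[
\mathrm{Ent}_{\mu_\rho}(g) \;\leq\; C(\rho)\,\big(\sqrt{g(1)} - \sqrt{g(0)}\big)^2, \qquad C(\rho) = \frac{2\rho(1-\rho)\,\big|\log\frac{\rho}{1-\rho}\big|}{|1-2\rho|} .
\]
This is (a non-optimal form of) the classical two-point logarithmic Sobolev inequality; its verification reduces to a one-variable optimization after the substitution $t = \sqrt{g(1)/g(0)}$, which I expect to be the main computational obstacle, and the explicit constant can be taken from the computation of Diaconis and Saloff-Coste \cite{DiaS-C}. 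Since $\int_{\{0,1\}} (\nabla_x \sqrt f)^2 \, d\mu_\rho = \big(\sqrt{f_x^{1}} - \sqrt{f_x^{0}}\big)^2$, where $f_x^{0}, f_x^{1}$ are the values of $f$ with $\eta_x$ set to $0,1$, applying the two-point inequality at each site and integrating over the remaining coordinates gives $\int \mathrm{Ent}_{\mu_\rho}^{\,x}(f) \, d\nurn \leq C(\rho) \int (\nabla_x \sqrt f)^2 \, d\nurn$. Summing over $x$ and combining with the displays above yields
\[
H(f;\nurn) \;\leq\; C(\rho) \sumx \int (\nabla_x \sqrt f)^2 \, d\nurn \;\leq\; \frac{C(\rho)}{\epso} \int \Gamma_n^{\mathrm r} \sqrt f \, d\nurn ,
\]
that is, $\int \Gamma_n^{\mathrm r} \sqrt f \, d\nurn \geq \kappa(\rho)^{-1} H(f;\nurn)$, as desired. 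The bound is uniform in $n$ and $d$: the only input specific to the model is the lower bound $c_x \geq \epso$, which produces the factor $\epso$ in the numerator, while the $n$-independence comes entirely from the tensorization of the two-point inequality.
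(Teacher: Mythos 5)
Your proof is correct and follows essentially the same route as the paper: reduce to the flat Dirichlet form $\sum_x \int (\nabla_x \sqrt f)^2\, d\nurn$ via the pointwise bound $c_x \geq \eps_0$, then invoke the product Bernoulli log-Sobolev inequality with constant $\kappa(\rho)$. The only difference is that the paper cites Diaconis--Saloff-Coste directly for that product inequality, whereas you unpack it into entropy tensorization plus the two-point inequality --- which is exactly what the cited results of Diaconis--Saloff-Coste (and your own appeal to them for the two-point constant) amount to.
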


\begin{remark}
By continuity, this lemma holds with right-hand side equal to $\eps_0$ when $\rho = 1/2$.
\end{remark}

\begin{proof}
This is a classical result in the literature, so we will only give a sketch of its proof. According to \cite[Theorem A.2 and Lemma 3.2]{DiaS-C}, 
\[
H( f;  \nurn) \leq \frac{2 \rho (1-\rho) \big| \log  \frac{\rho}{1-\rho}\big|}{|1-2\rho|} \sumx \int \big( \nabla_x \sqrt{f} \big)^2 d \nurn
\]
for every $\rho \in (0,1)$ and every density $f$ with respect to $\nurn$. Since $c_x(\eta) \geq \eps_0$ for every $x \in \tdn$ and every $\eta \in \omn$, 
\[
\int \Gamma^{\mathrm{r}}_n \sqrt{f} d \nurn \geq \eps_0 \sumx \int \big( \nabla_x \sqrt{f} \big)^2 d \nurn
\]
for every density $f$ with respect to $\nurn$, which proves the lemma.
\end{proof}

\subsection{The main lemma} 
\label{s3.3}
In order to make an effective use of Yau's inequality as stated in \eqref{Yau}, we need to
estimate the integral $\int L_n^\ast \mathbf{1} f_t^n d \nurnast$ in terms of the relative entropy
$H_n(t)$ and the \emph{energy} $\int \Gamma_n \sqrt{f_t^n} d \nurnast$. It turns out that the
specific form of $f_t^n$ as the density of a Markov chain
does not play a role in this estimation procedure. Therefore, we will estimate $\int L_n^\ast
\mathbf{1} f d \nurnast$ for arbitrary densities $f$. Moreover, it will be
useful to consider $L_n^\ast \mathbf{1}$ as a particular instance of the sum
\begin{equation}
\label{cachiyuyo}
V(g) := \sum_{i=1}^d \sumx \bareta_x \bareta_{x+e_i} g_x^i,
\end{equation}
where $g=(g^1,\dots,g^d): \tdn \to \bb R^d$ is a given function and where $\{e_1,\dots,e_d\}$
denotes the canonical basis of $\tdn$. According to \eqref{incahuasi}, $L_n^\ast \mathbf{1}$
corresponds to the case $g_x^i= \frac{\lambda}{2d \rast}$.
The so-called \emph{main lemma}, introduced in Theorem 3.1 of \cite{JarMen},
allows us to
replace $\bareta_x \bareta_y$ by products of averages in boxes of mesoscopic
size, with a cost controlled by the energy $\int \Gamma_n \sqrt{f_t^n} d \nurnast$.
Since we need to be very precise about the dependence on $\lambda$ of the
constants appearing in this lemma, we will present its proof. This proof follows very closely the proof in \cite{JarMen} and it can be
omitted in a first reading. Recall the definition of $\kappa(\rho)$ given in Lemma \ref{logso} and let us define $\mc A(u) := u(1+u)$ for every $u \geq 0$.

\begin{theorem}[Main lemma]
 \label{main}
There exists a constant $C= C(d)$ such that
\[
\begin{split}
\int V(g) f d \nurnast 
    &\leq \frac{1}{4} \int \Gamma_n^{\mathrm{ex}} \sqrt{f} d
\nurnast + C \kappa(\rast) \mc A(\|g\|_\infty) \int \Gamma_n^{\mathrm{r}} \sqrt{f} d
\nurnast \\
    &\quad+ C \mc A(\|g\|_\infty) n^{d-2} g_d(n)
\end{split}
\]
for every $n \in \bb N$, every $g: \tdn \to \bb R^d$ and every density $f$ with respect to $\nurnast$.\end{theorem}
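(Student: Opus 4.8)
The plan is to estimate the mean-zero functional $V(g)$ by a multiscale replacement of the microscopic product $\bareta_x\bareta_{x+e_i}$ with products of averages over mesoscopic boxes, paying for each scale with the appropriate energy. First I would reduce to a single coordinate direction and a scalar field $g$, since the sum over $i=1,\dots,d$ only costs a factor absorbed into $C(d)$ and $\|g\|_\infty$ controls each $g^i$. It is worth noting at the outset that $\int V(g)\,d\nurnast = 0$, because $\bareta_x$ and $\bareta_{x+e_i}$ are independent and centered under the product measure; hence $V(g)$ is genuinely mean-zero and amenable to $H_{-1}$-type bounds.

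The core step is a telescoping decomposition over dyadic scales $1 = \ell_0 < \ell_1 < \dots < \ell_K \sim n$. Writing $\bareta_x^{(\ell)}$ for the average of $\bareta$ over the box $x+B_\ell$, I would express
\[
\bareta_x\bareta_{x+e_i} = \bareta_x^{(\ell_K)}\,\bareta_{x+e_i}^{(\ell_K)} + \sum_{j=0}^{K-1}\Big( \bareta_x^{(\ell_j)}\bareta_{x+e_i}^{(\ell_j)} - \bareta_x^{(\ell_{j+1})}\bareta_{x+e_i}^{(\ell_{j+1})}\Big),
\]
with $\ell_0=1$ so that the first summand recovers the microscopic product. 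Each increment is local and mean-zero, and I would estimate its integral against $f$ by the variational bound
\[
\int w f\,d\nurnast \leq \frac{\beta_j}{4}\int \Gamma_n^{\mathrm{ex}}\sqrt{f}\,d\nurnast + \frac{1}{\beta_j}\|w\|_{-1,\mathrm{ex}}^2 ,
\]
choosing the parameters $\beta_j$ so that the exclusion-energy coefficients sum to exactly $1/4$. The negative Sobolev norms $\|w\|_{-1,\mathrm{ex}}^2$ are computed by solving the Poisson equation for $L_n^{\mathrm{ex}}$, which reduces to estimates for the Green's function of the simple symmetric random walk on a box; summing these contributions over the scales and over the $\sim n^d$ edges, and accounting for the $n^2$ speed-up of the exclusion dynamics, produces the error $C\,\mc A(\|g\|_\infty)\,n^{d-2}g_d(n)$, with $g_d(n)$ emerging precisely as the order of magnitude of that Green's function at the origin.

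Since the exclusion dynamics conserves the number of particles, the coarsest block average $\bareta_x^{(\ell_K)}$ lives essentially on the conserved global mode, whose $H_{-1}^{\mathrm{ex}}$-norm is infinite, so the exclusion energy cannot control it. For the remaining term $\sum_x \bareta_x^{(\ell_K)}\bareta_{x+e_i}^{(\ell_K)}$ (which at this scale is close to $\sum_x (\bareta_x^{(\ell_K)})^2 g_x$) I would therefore switch to the reaction dynamics: use the entropy inequality to bound it by a relative entropy plus an exponential-moment term, and then apply the log-Sobolev inequality of Lemma \ref{logso} to convert the entropy into the reaction energy $\int \Gamma_n^{\mathrm{r}}\sqrt{f}\,d\nurnast$. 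This is where the prefactor $\kappa(\rast)$ enters; expanding the exponential moment of the squared block average under the product measure $\nurnast$ yields both a linear and a quadratic contribution in $\|g\|_\infty$, which combine into $\mc A(\|g\|_\infty)=\|g\|_\infty(1+\|g\|_\infty)$.

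The main obstacle is the quantitative bookkeeping. One must choose the scales $\ell_j$ and the parameters $\beta_j$ so that the exclusion-energy prefactor is exactly $1/4$ while the accumulated Green's-function errors sum to the sharp order $n^{d-2}g_d(n)$ in each of the regimes $d=1$, $d=2$, $d\geq 3$; and one must track the dependence of the reaction-energy term on $\kappa(\rast)$ and on $\|g\|_\infty$ explicitly, since the later tuning of $\lambda$ (through $g_x^i=\frac{\lambda}{2d\rast}$) relies on making this coefficient small. Obtaining these constants exactly, rather than merely up to unspecified factors, is the delicate part of the argument.
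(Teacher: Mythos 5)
Your overall architecture is close in spirit to the paper's (replacement of $\bareta_x\bareta_{x+e_i}$ by block averages with the cost measured by the exclusion energy, Green's-function bookkeeping producing $g_d(n)$, and entropy inequality plus log-Sobolev for the coarsest scale), but there is a genuine gap at the central step. The variational bound you invoke for each increment,
\[
\int w f\, d \nurnast \;\leq\; \frac{\beta_j}{4} \int \Gamma_n^{\mathrm{ex}} \sqrt{f}\, d \nurnast + \frac{1}{\beta_j} \|w\|_{-1,\mathrm{ex}}^2,
\]
with a \emph{deterministic} negative-Sobolev norm computed from the Poisson equation, is not available. The $H_{-1}$ duality pairs $w$ against $f$ linearly and pays with the Dirichlet form of $f$ itself, not of $\sqrt f$. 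What integration by parts actually yields (this is Lemma E.2 of \cite{JarMen}, used in the paper) is, for $w = h_{y,z}(\bareta_y - \bareta_z)$ with $h_{y,z}$ not depending on $\eta_y,\eta_z$,
\[
\int h_{y,z} (\bareta_y - \bareta_z) f\, d \nurnast \;\leq\; \beta_0 \int (\nabla_{y,z}\sqrt{f})^2 d \nurnast + \frac{1}{\beta_0} \int h_{y,z}^2\, f\, d \nurnast,
\]
where the error term \emph{retains the unknown density} $f$. This is not a cosmetic difference: if you try to remove the $f$ by the crude bound $\int h_{y,z}^2 f\, d\nurnast \leq \|h_{y,z}\|_\infty^2$, Cauchy--Schwarz on the flow gives $\|h_{y,z}\|_\infty^2 \leq C\|g\|_\infty^2 \ell^d g_d(\ell)$, and after summing over the $\sim n^d$ edges and dividing by $\beta n^2$ the error is of order $n^{d-2}\,\ell^d g_d(\ell)\,\|g\|_\infty^2$ --- too large by a factor $\ell^d$ (in $d=1$ with $\ell \sim n$ this gives order $n$ instead of the required order $1$).

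The missing ingredient is precisely the technical core of the paper's proof: the quantity $\int \sum_{y \sim z} h_{y,z}^2 f\, d\nurnast$ must be estimated by the entropy inequality \eqref{loa} combined with exponential-moment bounds (Hoeffding's lemma and the sub-Gaussian Lemma \ref{subG}) and a partition of the edge set into $C(d)\ell^d$ families of mutually independent summands; this concentration argument under the product measure $\nurnast$ produces the bound $C\|g\|_\infty^2 \ell^d g_d(\ell)\big( H(f;\nurnast) + n^d/\ell^d \big)$, which is what makes the sharp rate $n^{d-2}g_d(n)$ attainable. A consequence you also miss is structural: this step generates relative-entropy terms $H(f;\nurnast)$ at the replacement stage (not only at the coarsest scale), and all of them must be converted into reaction energy via Lemma \ref{logso}; so the coefficient $\kappa(\rast)\,\mc A(\|g\|_\infty)$ in front of $\int \Gamma_n^{\mathrm{r}}\sqrt f\, d\nurnast$ is fed by both parts of the decomposition, and tracking it is what allows the later tuning of $\lambda_c$. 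Incidentally, the paper performs the replacement in a single step, from scale $1$ directly to scale $\ell$, via an explicit flow connecting $\delta_0$ to a smoothed kernel $q^\ell$ (Lemma 3.2 of \cite{JarMen}); your dyadic telescoping could be made to work and would sum to the same $g_d(n)$, but only after each increment is treated with the concentration machinery above rather than with a deterministic $H_{-1}$ norm.
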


\begin{proof}
For $\ell < n$ and $x \in \tdn$, let $\bb C_x^\ell$ the cube of vertex $x$ and
side $\ell$ given by
\[
\bb C_x^\ell := \{y \in \tdn; y_i-x_i \in \{0,1,\dots,\ell-1\} \text{ for every
} i \in \{1,\dots,d\}\}.
\]
Let $p^\ell:\tdn \to [0,1]$ be the uniform measure in $\bb C_0^\ell$, that is,
$p^\ell(x) = \ell^{-d}\mathbf{1}(x \in \bb C_0^\ell)$ for every $x \in \tdn$.
For $\ell <n/2$, let $q^\ell: \tdn \to [0,1]$ be given by $q^\ell := p^\ell \ast
p^\ell$, that is,
\[
q^\ell(y) := \sumx p^\ell(y-x) p^\ell(x)
\]
for every $y \in \tdn$. Observe that $q^\ell$ is supported on $\bb
C^{2\ell-1}_0$. 

For $\eta \in \omn$, $x \in \tdn$ and $\ell <n/2$, let
$\bareta_x^\ell \in \bb R$ be given by
\[
\bareta_x^\ell := \sumy q^\ell(y) \bareta_{x+y}.
\]
Let us define as well
\[
\vetax{\ell}{x} := \sumy p^\ell(y) \bareta_{x+y}, \quad
\vvetax{\ell,i}{x}(g) := \sumy p^\ell(y)\bareta_{x-y} g_{x-y}^i,
\]
\[
V^\ell(g) := \sum_{i=1}^d \sumx \bareta_x \bareta_{x+e_i}^\ell g_x^i.
\]
Thanks to our choice of the probability measure $q^\ell$, the sum $V^\ell(g)$ can be rewritten as
\[
V^\ell(g) = \sum_{i=1}^d \sumx \vvetax{\ell,i}{x}(g) \vetax{\ell}{x+e_i}.
\]
This identity will make the proof of Lemma \ref{main} shorter. 
The idea is to estimate $\int (V(g) - V^\ell (g))f d \nurnast$ in terms of $\int
\Gamma_n^{\mathrm{ex}} \sqrt f d \nurnast$. Observe that
\[
V(g) - V^\ell(g) =  \sum_{i=1}^d \sumx \bareta_x (\bareta_{x+e_i} -
\bareta_{x+e_i}^\ell) g_x^i.
\]
Let us rewrite the difference $\bareta_x - \bareta_x^\ell$ as a linear
combination of terms of the form $\bareta_z-\bareta_y$, where $y \sim z$. In
order to do that, we will use \emph{flows}.

Let $\mc E^d_n := \{(x,y); x, y \in \tdn, x \sim y\}$ be the set of oriented edges
of the periodic lattice $\tdn$. We say that a function $\phi: \mc E^d_n \to \bb
R$ is a \emph{flow} if $\phi(x,y) = - \phi(y,x)$ for every $(x,y) \in \mc
E^d_n$. Let $p,q$ be two probability measures in $\tdn$. We say that a flow
$\phi$ \emph{connects} $p$ to $q$ if
\[
p(x)-q(x) = \sumyx \phi(x,y)
\]
for every $x \in \tdn$. In other words, $p-q = \divergence \phi$. In that case,
$p$, $q$ and $\phi$ satisfy the \emph{divergence formula}: for every $g: \tdn
\to \bb R$,
\begin{equation}
\label{div}
\sumx g(x)(p(x)-q(x)) = \sumxy \phi(x,y) (g(x)-g(y)).
\end{equation}
Recall the definition of $g_d(n)$ given in \eqref{gd}. Lemma 3.2 in \cite{JarMen} tells us that for every $\ell < n/2$ there exists a
finite $C=C(d)$ and a flow $\phi^\ell$ connecting the Dirac $\delta$ at $x=0$
to $q^\ell$ such that
\begin{itemize}
 \item [i)] $\displaystyle{\sumxy \phi^\ell(x,y)^2 \leq C g_d(\ell)}$,

 \item[ii)] $\phi^\ell(x,y)=0$ whenever $x \notin \bb C_0^{2\ell-1}$ or $y
\notin \bb C_0^{2\ell-1}$.
\end{itemize}

Using the flow $\phi^\ell$ given by Lemma 3.2 of \cite{JarMen}, the translation
invariance of the lattice $\tdn$  and the divergence formula \eqref{div}, we see
that
\[
\bareta_x - \bareta_x^\ell = \sumyz
\phi^\ell(y,z) (\bareta_{x+y}-\bareta_{x+z}), 
\]
from where
\[
\begin{split}
 \sum_{i=1}^d \sumx \bareta_x (\bareta_{x+e_i} -
\bareta_{x+e_i}^\ell) g_x^i 
      &= \sum_{i=1}^d \sumx \;\; \sumyz \phi^\ell(y,z)
(\bareta_{x+y+e_i}-\bareta_{x+z+e_i}) \bareta_x g_x^i\\
      &= \sumyz \biggl(\sum_{i=1}^d \sumx \phi^\ell(y-x,z-x) \bareta_{x-e_i}
g_{x-e_i}^i \biggr) (\bareta_y-\bareta_z)\\
      &= \sumyz h_{y,z}^\ell(g;\eta) (\bareta_y- \bareta_z),
\end{split}
\]
where $h_{y,z}^\ell(g): \omn \to \bb R$ is defined as
\[
h_{y,z}^\ell(g;\eta) := \sum_{i=1}^d \sumx \phi^\ell(y-x,z-x) \bareta_{x-e_i}
g_{x-e_i}^i
\]
for every $\eta \in \omn$. Observe that $h_{y,z}^\ell(g)$ does not depend on $\eta_y$, $\eta_z$. 
From Lemma E.2 of \cite{JarMen}, we have that
\[
\int h_{y,z}^\ell(g) (\bareta_y- \bareta_z) f d\nurnast \leq \beta_0 \int
(\nabla_{y,z} \sqrt{f})^2 d \nurnast + \frac{1}{\beta_0} \int h_{y,z}^\ell(g)^2 f 
d \nurnast
\]
for every $\beta_0 > 0$. Choosing $\beta_0 = \beta n^2$ and taking the sum over $y \sim z$, we see that
\begin{equation}
\label{coquimbo}
\int (V(g)-V^\ell(g)) f d \nurnast \leq \beta \int \Gamma_n^{\mathrm{ex}}
\sqrt{f} d \nurnast + \frac{1}{2\beta n^2} \int \sumyz h_{y,z}^\ell(g)^2 f d
\nurnast.
\end{equation}
This estimate is exactly Lemma 3.3 of \cite{JarMen}. In our case, the proof is
simpler due to the translation invariance of the reference measure $\nurnast$.
Our next task is to take advantage of the average over boxes of size $\ell$ in
$V^\ell(g)$ in order to get a good estimate in terms of $H(f; \nurnast)$. 
The so-called \emph{entropy inequality} says that for every function
$h: \Omega_n \to \bb R$, every density $f$ with respect to $\nurnast$ and every
$\gamma > 0$,
\begin{equation}
\label{loa}
\int h f d \nurnast \leq \frac{1}{\gamma}\Big( H(f; \nurnast) + \log \int e^{\gamma h} d
\nurnast\Big).
\end{equation}
In order to be able to use this estimate, we need to compute the exponential
moments of the variables $h_{x,y}^\ell(g)^2$ and $\vvetax{\ell,i}{x}(g) \vetax{\ell}{x+e_i}$
with respect to $\nurnast$. Using independence and Lemma \ref{Hoeff}, we see
that for every $\theta \in \bb R$,
\begin{equation}
\label{huasco}
\begin{split}
 \log \int e^{\theta h_{y,z}^\ell(g)} d \nurnast
    &\leq \sumx \frac{\theta^2}{8} \Big( \sum_{i=1}^d
\phi^\ell(y-x-e_i,z-x-e_i) g_x^i \Big)^2 \\
    &\leq \frac{d \theta^2}{8} \|g\|_\infty^2 \sum_{i=1}^d \sumx
\phi^\ell(y-x-e_i,z-x-e_i)^2\\
    &\leq C(d) \theta^2 \|g\|_\infty^2 g_d(\ell).
\end{split}
\end{equation}
Therefore, by Lemma \ref{subG}, 
\begin{equation}
\label{vicuna}
\int e^{\gamma h_{y,z}^\ell(g)^2} d \nurnast 
    \leq \big(1- C(d) \gamma \|g\|_\infty^2
g_d(\ell)\big)^{-1/2}
\end{equation}
whenever $\gamma^{-1} > C(d) \|g\|_\infty^2
g_d(\ell)$.

Observe that $h_{y,z}^\ell(g)$ and $h_{y',z'}^\ell(g)$ are independent whenever
$|y+z-y'-z'| > 4 \ell$. Therefore, the set $\mc E_n^d$ can be divided into
$C(d) \ell^d$ disjoint sets $\{A^i; i \in \mc I\}$ such that the terms in the
sums
\[
\sum_{(y,z) \in A^i} h_{y,z}^\ell(g)^2
\]
are mutually independent. Therefore, for $\gamma > 0$ small enough,
\[
\begin{split}
 \int \sumyz h_{y,z}^\ell(g) ^2 f d \nurnast 
    &= \sum_{i \in \mc I} \int \sum_{(y,z) \in A^i} h_{y,z}^\ell(g)^2 f  d
\nurnast \\
    &\leq \sum_{i \in \mc I} \frac{1}{\gamma} \Big( H(f; \nurnast) + \log \int
e^{\gamma\sum_{(y,z) \in A^i} h_{y,z}^\ell(g)^2 } d \nurnast\Big)\\
    &\leq \sum_{i \in \mc I} \frac{1}{\gamma} \Big(H(f; \nurnast) +\sum_{(y,z) \in A^i}\log
\int e^{\gamma h_{y,z}^\ell(g)^2 } d \nurnast\Big)\\
    &\leq \gamma^{-1}C(d) \ell^d H(f; \nurnast) + \gamma^{-1} n^d \log \big(1- \gamma C(d)
\|g\|_\infty^2 g_d(\ell) \big)^{-1/2}.
\end{split}
\]
In the last line we used \eqref{vicuna}. Taking $\gamma^{-1}=2 C(d) \|g\|_\infty^2 g_d(\ell)$ we conclude that
\begin{equation}
\label{laserena}
\int \sumyz h_{y,z}^\ell(g)^2 f d \nurnast \leq C(d) \|g\|_\infty^2 \ell^d
g_d(\ell) \Big( H(f; \nurnast) + \frac{n^d}{\ell^d}\Big).
\end{equation}
Repeating the arguments in \eqref{huasco} for the random variables
$\vvetax{\ell,i}{x}(g)$ and $\vetax{\ell}{x}$ for each $x \in \tdn$ and each $i \in \{1,\dots,d\}$, we see that for every $\theta \in
\bb R$,
\[
\log \int e^{\theta \vetax{\ell}{x}} d \nurnast \leq \sum_{y \in \tdn}
\tfrac{1}{8} p^\ell(y)^2 \theta^2 \leq \frac{\theta^2}{8 \ell^d}
\]
and
\[
\log  \int e^{\theta \vvetax{\ell,i}{x}(g)} d \nurnast \leq
\frac{\|g\|_\infty^2 \theta^2}{8 \ell^d}.
\]
Since $\vvetax{\ell,i}{x}(g)$ and  $\vetax{\ell}{x+e_i}$ are independent under
$\nurnast$, using Lemma \ref{subG} we obtain the bound
\[
\int e^{\gamma \vvetax{\ell,i}{x}(g) \vetax{\ell}{x+e_i}} d \nurnast
    \leq \int \exp\Big\{\frac{\|g\|_\infty^2 \gamma^2}{8 \ell^d}\big(
\vetax{\ell}{x+e_i}\big)^2\Big\} d \nurnast
    \leq \Big(1 - \frac{\gamma^2 \|g\|_\infty^2}{16 \ell^{2d}}\Big)^{-1/2} 
\]
for every $\gamma < 4 \ell^d \|g\|_\infty^{-1}$. 
Observe that for $\gamma =\frac{2 \ell^d}{\|g\|_\infty}$, we obtain the bound
\[
\int e^{\gamma \vvetax{\ell,i}{x}(g) \vetax{\ell}{x+e_i}} d \nurnast \leq \frac{2}{\sqrt 3}.
\]
Observe that the random variables $\vvetax{\ell,i}{x}(g) \vetax{\ell}{x+e_i}$ and 
$\vvetax{\ell,j}{y}(g) \vetax{\ell}{y+e_j}$ are independent under $\nurnast$ as
soon as $\|y-x\| > 2\ell$. Dividing the set $\mc E_n^d$ as above, we obtain the
estimate
\begin{equation}
\label{tongoy}
\int \sumx \sum_{i=1}^d \vvetax{\ell,i}{x}(g) \vetax{\ell}{x+e_i} f d \nurnast
\leq C(d) \|g\|_\infty \Big( H(f; \nurnast) + \frac{n^d}{\ell^d}\Big).
\end{equation}
Putting estimates \eqref{coquimbo}, \eqref{laserena} and \eqref{tongoy}
together, we conclude that for every $\beta >0$,
\begin{equation}
\label{ML}
\begin{split}
\int \sumx  \sum_{i=1}^d \bareta_x \bareta_{x+e_i} g_x^i f d \nurnast 
    &\leq \beta \int \Gamma_n^{\mathrm{ex}} \sqrt{f} d \nurnast\\
    &\quad + C(d) \Big(
\|g\|_\infty + \frac{\|g\|_\infty^2 \ell^d g_d(\ell)}{\beta n^2}\Big) \Big( H(f; \nurnast)
+ \frac{n^d}{\ell^d}\Big).
\end{split}
\end{equation}
Choosing in this estimate $\beta = \frac{1}{4}$ and 
\[
\ell = 
\left\{
\begin{array}{c@{\;;\;}l}
 \frac{n}{4} & d=1,\\
 \frac{n}{\sqrt{\log n}} & d=2,\\
 n^{d-2} & d \geq 3,
\end{array}
\right.
\]
we see that
\begin{equation}
\label{losvilos}
\int \sumx  \sum_{i=1}^d \bareta_x \bareta_{x+e_i} g_x^i f d \nurnast 
		\leq \frac{1}{4} \int \Gamma_n^{\mathrm{ex}} \sqrt{f} d \nurnast 
			+C(d) \mc A(\|g\|_\infty) \big( H(f; \nurnast) + n^{d-2} g_d(n)\big).
\end{equation}
By the definition in \eqref{logsoconst} and Lemma \ref{logso},
\[
H(f; \nurnast) \leq \kappa(\rast) \int \Gamma_n^ {\mathrm{r}} \sqrt{f} d \nurnast
\]
for every density $f$. Therefore,
\[
\begin{split}
\int \sumx  \sum_{i=1}^d \bareta_x \bareta_{x+e_i} g_x^i f d \nurnast 
		&\leq \frac{1}{4} \int \Gamma_n^{\mathrm{ex}} \sqrt{f} d \nurnast
			+ C \kappa(\rast) \mc A(\|g\|_\infty) \int \Gamma_n^{\mathrm{r}} \sqrt{f} d \nurnast\\
		&\quad + C \mc A(\|g\|_\infty) n^{d-2} g_d(n),
\end{split}
\]
which proves the lemma.
\end{proof}

\subsection{Proof of Theorem \ref{t2}}
\label{s3.4}
Using Theorem \ref{main}, it is not difficult to carry out the proof of Theorem \ref{t2}. If we take $g_x^ i = \frac{\lambda}{2 d \rast}$ for every $x \in \tdn$ and every $i \in \{1,\dots,d\}$, if we take $f = f_t^ n$ in Theorem \ref{main} and if we put the resulting estimate into \eqref{Yau}, then we see that
\[
H_n' (t) \leq - \frac{3}{4} \int \Gamma_n^ {\mathrm{ex}} \sqrt{f_t^n} d \nurnast + \big( C \kappa(\rast) \mc A \Big( \frac{|\lambda|}{d \rast}\Big) - 1 \big) \int \Gamma_n^ {\mathrm{r}} \sqrt{f_t^ n} d \nurnast + C \mc A\Big( \frac{|\lambda|}{d \rast} \Big) n^{d-2} g_d(n).
\]
Observe that
\[
\lim_{\lambda \to 0} \eps_0 = \min\{a,b\}, \; \lim_{\lambda \to 0} \rast = \frac{a}{a+b} \text{ and } \lim_{\lambda \to 0} \kappa(\rast) = \frac{|b^ 2 - a^ 2| \min\{a,b\}}{2 ab |\log \frac{b}{a}|}.
\]
Therefore,
\[
\lim_{\lambda \to 0} \big( C\kappa(\rast) \mc A \Big(\frac{|\lambda|}{d \rast} \Big) -1 \big) =-1
\]
and there exists $\lambda_c>0$ such that 
\begin{equation}
\label{laligua}
 C \kappa(\rast) \mc A\Big( \frac{\lambda}{d \rast}\Big) < \frac{1}{2} \text{ and } \mc A \Big( \frac{\lambda}{d \rast} \Big) \leq 1
\end{equation}
for every $\lambda \in [-\lambda_c,\lambda_c]$. Using Lemma \ref{logso}, we see that for $\lambda \in [-\lambda_c,\lambda_c]$,

\[
\begin{split}
H_n'(t) 
	& \leq - \frac{1}{2}  \int \Gamma_n^ {\mathrm{r}} \sqrt{f_t^ n} d \nurnast + C \mc A \Big( \frac{|\lambda|}{d \rast} \Big) n^{d-2} g_d(n)\\
	& \leq -\frac{1}{2 \kappa(\rast)} H_n(t) + C n^{d-2} g_d(n).
\end{split}
\]
Using $e^{\frac{t}{2 \kappa(\rast)}}$ as an integrating factor, since $H_n(0) =0$ we conclude that
\[
H( f_t^n; \nurnast) = H_n(t) \leq C n^{d-2} g_d(n)
\]
for every $t \geq 0$. Taking $t \to \infty$, Theorem \ref{t2} is proved.

\subsection{Quantitative hydrostatics}
\label{s3.5}
In this section we prove Theorem \ref{t1} as a corollary of Theorem \ref{t2}. It will be useful to prove a more general version of Theorem \ref{t1}. Recall the definitions introduced before Theorem \ref{loceq}.
We say that a function $\psi: \Omega_n \to \bb R$ has \emph{support in} $B_R$ if $\psi(\eta^x) = \psi(\eta)$ for every $x \notin \mc P(B_R)$ and every $\eta \in \Omega_n$. For $x \in \tdn$ and $\eta \in \Omega_n$, let $\tau_x \eta \in \Omega_n$ be the translation of $\eta$ by $x$, that is, $(\tau_x \eta)_z = \eta_{z+x}$ for every $z \in \tdn$. 
For $g: \Omega_n \to \bb R$ and $x \in \tdn$, let $\tau_x g: \Omega_n \to \bb R$ be the translation of $g$ by $x$, that is, $\tau_x g(\eta) := g(\tau_x \eta)$ for every $\eta \in \Omega_n$. 
For $\psi$ with support in $B_R$ and $x \in \tdn$, let us write $\psi_x: = \tau_x \psi$. Define $\<\psi\>_\rast := \int \psi d \nurnast$. For each $f \in \mc C(\bb T^d; \bb R)$, let $\Psi^n(f) : \Omega_n \to \bb R$ be given by
\begin{equation}
\label{lascruces}
\Psi^n(f; \eta) := \frac{1}{n^d} \sum_{x \in \tdn} \big( \psi_x(\eta) - \< \psi \>_\rast\big) f \big( \tfrac{x}{n} \big)
\end{equation}
for every $\eta \in \Omega_n$. Theorem \ref{t1} is a particular case of the following estimate:

\begin{theorem}
\label{t4}
There exists a finite constant $C = C(\psi; \rast)$ such that
\[
\int \Psi^n(f)^2 d \muss \leq \frac{C \|f\|_{\ell^2_n}^2 g_d(n)}{n^2}
\]
for every $f \in \mc C(\bb T^d; \bb R)$, where
\[
\|f\|_{\ell^2_n}^2 := \frac{1}{n^d} \sum_{x \in \tdn} f \big( \tfrac{x}{n} \big)^2.
\]
\end{theorem}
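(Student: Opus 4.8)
The plan is to transfer the $L^2(\muss)$ estimate onto the reference product measure $\nurnast$ by means of the entropy inequality \eqref{loa}, paying for the change of measure with the relative entropy bound of Theorem \ref{t2}. Writing $\muss = \fuss \nurnast$ and applying \eqref{loa} with $h = \Psi^n(f)^2$ and density $\fuss$, for every $\gamma > 0$ one has
\[
\int \Psi^n(f)^2 \, d\muss \leq \frac{1}{\gamma}\Big( H(\fuss; \nurnast) + \log \int e^{\gamma \Psi^n(f)^2} \, d\nurnast \Big).
\]
The first term is directly controlled by Theorem \ref{t2}, so the crux is to bound the exponential moment of $\Psi^n(f)^2$ under the product measure, for which I would exploit that $\Psi^n(f)$ is a sum of centered, bounded, and weakly dependent variables.

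The second step is to show that $\Psi^n(f)$ is sub-Gaussian under $\nurnast$ with variance parameter of order $n^{-d}\|f\|_{\ell^2_n}^2$. Since $\psi$ has support in $B_R$, the centered variables $\psi_x - \<\psi\>_\rast$ and $\psi_y - \<\psi\>_\rast$ are independent under $\nurnast$ whenever $\|x-y\| > 2R$; hence $\tdn$ splits into $C(R)$ classes on which the summands are mutually independent. On each class I would apply the Hoeffding-type bound of Lemma \ref{Hoeff} to the corresponding independent sum, using that by translation invariance of $\nurnast$ each centered summand is bounded by $\Osc \psi$, and then recombine the $C(R)$ classes by the generalized Hölder inequality. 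Using the identity $\tfrac{1}{n^{2d}}\sumx f(\tfrac{x}{n})^2 = \tfrac{1}{n^d}\|f\|_{\ell^2_n}^2$, this yields
\[
\log \int e^{\theta \Psi^n(f)} \, d\nurnast \leq \frac{C(\psi;\rast)\,\theta^2}{2 n^d}\|f\|_{\ell^2_n}^2
\]
for every $\theta \in \bb R$. Feeding this sub-Gaussian bound into Lemma \ref{subG} gives
\[
\int e^{\gamma \Psi^n(f)^2} \, d\nurnast \leq (1 - 2\gamma \sigma^2)^{-1/2}, \qquad \sigma^2 := \frac{C(\psi;\rast)}{n^d}\|f\|_{\ell^2_n}^2,
\]
valid whenever $2\gamma\sigma^2 < 1$.

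Finally, I would choose $\gamma = (4\sigma^2)^{-1}$, so that $2\gamma\sigma^2 = \tfrac12$ and the logarithmic term becomes an absolute constant. Inserting this choice together with $H(\fuss;\nurnast) \leq C n^{d-2} g_d(n)$ from Theorem \ref{t2} gives
\[
\int \Psi^n(f)^2 \, d\muss \leq 4\sigma^2\Big( C n^{d-2} g_d(n) + \tfrac{\log 2}{2}\Big) \leq \frac{C(\psi;\rast)\, g_d(n)}{n^2}\|f\|_{\ell^2_n}^2,
\]
where the leading contribution comes from multiplying $\sigma^2 \sim n^{-d}$ by $n^{d-2}g_d(n)$, and the residual term $\sigma^2 \sim n^{-d}\|f\|_{\ell^2_n}^2$ is absorbed since $n^{-d} \leq n^{-2}g_d(n)$ in every dimension by \eqref{gd}. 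The main obstacle is precisely this matching of scales: the relative entropy of Theorem \ref{t2} is itself large, of order $n^{d-2}g_d(n)$ and thus divergent for $d \geq 2$, and it is only the factor $n^{-d}$ coming from the sub-Gaussian variance of $\Psi^n(f)$ that compensates this growth to produce the sharp $n^{-2}g_d(n)$ decay. Securing the variance parameter $\sigma^2$ at exactly the order $n^{-d}$—and no worse—requires carefully combining the independence furnished by the finite range $R$ of $\psi$ with the $\ell^2_n$ normalization of the test function, after which Theorem \ref{t1} follows by taking $\psi = \eta_0$, so that $\psi_x = \eta_x$ and $\<\psi\>_\rast = \rast$.
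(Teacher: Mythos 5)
Your proposal is correct and follows essentially the same route as the paper's proof: the entropy inequality \eqref{loa} with $h=\Psi^n(f)^2$, the sub-Gaussian bound for $\Psi^n(f)$ under $\nurnast$ obtained from Hoeffding's lemma combined with the splitting of $\tdn$ into finitely many classes of independent summands, Lemma \ref{subG}, and the choice $\gamma \sim n^d/(\|f\|_{\ell^2_n}^2)$ against the entropy bound of Theorem \ref{t2}. The scale matching you highlight (variance parameter $\sigma^2 \sim n^{-d}\|f\|_{\ell^2_n}^2$ compensating the entropy $n^{d-2}g_d(n)$) is exactly how the paper concludes.
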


\begin{proof}
Let $\Osc(\psi):= \sup_\eta \psi(\eta) - \inf_\eta \psi(\eta)$. By Hoeffding's Lemma, for every $x \in \tdn$ we have that
\[
\log \int e^{ \theta ( \psi_x -\<\psi\>_\rast)} d \nurnast \leq \frac{\theta^2 \Osc(\psi)^2}{8}.
\]
Since $B_R$ is a cube of side $2R+1$, $\psi_x$ and $\psi_y$ are independent as soon as $\|y-x \|_\infty > 2R+1$. Therefore, $\tdn$ can be split into disjoint sets $(A^i; i \in \mc I_R)$ such that the cardinality of $\mc I_R$ is at most $C(d)(2R+1)^d$  and such that the terms in the sums
\[
\frac{1}{n^d} \sum_{x \in A^i} \big(\psi_x - \<\psi\>_{\rast}\big) f \big(\tfrac{x}{n} \big)
\]
are all independent.
Therefore,
\[
\begin{split}
\log \int e^{\theta \Psi^n(f) } d \nurnast 
	&\leq \tfrac{C(d)}{(2R+1)^d} \sum_{i \in \mc I_R} \log \int \exp \big\{ \tfrac{\theta C(d)(2R+1)^d}{ n^{d}} \sum_{x \in A^i} (\psi_x- \<\psi\>_\rast) f \big( \tfrac{x}{n} \big)\big\} d \nurnast\\
	&\leq \tfrac{C(d)}{(2R+1)^d} \sum_{i \in \mc I_R} \sum_{x \in A^i} \tfrac{\theta^2 C(d) (2R+1)^{2d} \Osc(\psi)^2}{8 n^{2d}} f\big( \tfrac{x}{n} \big)^2 \\
	&\leq \frac{\theta^2 C(d) (2R+1)^d \Osc(\psi)^2\|f\|_{\ell^2_n}^2}{n^d}.
\end{split}
\]
By Lemma \ref{subG} we see that
\[
\int e^{\theta \Psi^ n(f)^2} d \nurnast \leq \sqrt 2
\]
for 
\[
\theta = \frac{8 n^d}{C(d)(2R+1)^d \|f\|_{\ell^2_n}^2\Osc(\psi)^2}.
\]
Using the entropy inequality \eqref{loa}, we see that
\begin{equation}
\label{quintero}
\begin{split}
\int \Psi^ n(f)^ 2 d \muss 
	&\leq \theta^{-1} \Big( H(\muss| \nurnast) + \log \int e^{\theta \Psi^ n(f)^2} d \nurnast \Big)\\
	&\leq \frac{C(d)(2R+1)^d \|f\|_{\ell^2_n}^2 \Osc(\psi)^2}{n^d} \big( C n^{d-2} g_d(n) + \log \sqrt 2 \big)\\
	&\leq \frac{C(\psi, \rast,d) \|f\|_{\ell^2_n}^2 g_d(n)}{n^2},
\end{split}
\end{equation}
as we wanted to show.
\end{proof}

Observe that Theorem \ref{t1} corresponds to Theorem \ref{t4} in the particular case $\psi(\eta) = \eta_0$. It turns out that Theorem \ref{t4} can also be used to prove Theorem \ref{loceq}.

\begin{proof}[Proof of Theorem \ref{loceq}]
In general, the  bound on the relative entropy given by Theorem \ref{t2} is not enough to derive a result as strong as Theorem \ref{loceq}. However, in our particular setting, we can take advantage of the translation invariance of the dynamics in order to improve the bounds. Observe that the generator of $(\eta(t); t \geq 0)$ satisfies
\[
\tau_x (L_nf)  = L_n (\tau_x f) 
\]
for every $f: \Omega_n \to \bb R$ and every $x \in \tdn$. In particular, if the law of $\eta(0)$ is translation invariant, then the law of $\eta(t)$ is translation invariant for every $t \geq 0$. Taking $t \to \infty$, we deduce that $\muss$ is translation invariant.
 
 Recall the definition of $\Pi_R$ given before Theorem \ref{loceq}.
For $R <\frac{n-1}{2}$ and $x \in \tdn$, let $\Pi_R^x$ be the canonical projection onto the box of size $R$ and center $x$, that is, $\Pi_R^x \eta := \Pi_R( \tau_x \eta)$ for every $\eta \in \Omega_n$. Observe that for every $\psi: \{0,1\}^{B_R} \to \bb R$ and every $x,y \in \tdn$, 
\[
\int \psi(\Pi_R^x \eta)  d \muss  = \int \psi (\Pi_R^y \eta) d \muss.
\]
Therefore, 
\[
\int \psi \,d \mussr{R} = \int \psi(\Pi_R^x \eta)  d \muss
\]
for every $x \in \tdn$. Recall the definition of $\Psi^n$ given in \eqref{lascruces}. Applying estimate \eqref{quintero} to the constant function $\mathbf{1}$, we see that
\[
\begin{split}
\Big| \int \psi \, d \mussr{R} - \int \psi \, d \nurnastr{R} \Big|^2
		&= \Big| \int \frac{1}{n^ d} \sum_{x \in \tdn} \big( \psi_x - \<\psi\>_\rast) d \muss \Big|^2\\
		&= \Big| \int \Psi^n(\mathbf 1) d \muss\Big|^2 \leq \frac{C (2R+1)^d g_d(n)\Osc(\psi)^ 2}{n^2}.
\end{split}
\]
Observe that $\|\psi\|_\infty \leq 1$ implies $\Osc(\psi) \leq 2$. Taking the supremum over $\|\psi\|_\infty \leq 1$ in this estimate and recalling the definition of $g_d(n)$, Theorem \ref{loceq} is proved.
\end{proof}

\section{Density fluctuations}
\label{s4}
\subsection{Case $d=1$}
In this section we prove Theorem \ref{t3} in the case $d=1$. In that case the proof is simpler, due to the fact that the estimate in Theorem \ref{t2} is uniform in $n$. This will allow us to explain better the ideas behind the proof, which will be used for $d=2,3$ afterwards. Some of the computations are not dimension dependent; in those cases we will keep the dependence on $d$ in the notation.

By Theorem \ref{t1}, for every $\lambda \in [-\lambda_c,\lambda_c]$, 
\[
\int X^n(f)^2 d \muss \leq C \|f\|_{\ell^2_n}^2.
\]
Recall definition \eqref{calama}. For $f(x) = \cos(2 \pi k x)$ or $f = \sin(2 \pi k x)$, $\|f\|_{\ell^2_n} \leq 1$. Therefore, for $m < -1/2$,
\[
\int \|X^n\|_{\mc H^m}^2 d \muss = \sum_{k \in \bb Z} (1+k^2)^m \int |\hat{X}^n(k)|^2 d \muss \leq C \sum_{k \in \bb Z} (1+k^2)^m \leq C(m) <+\infty.
\]
Recall that the inclusion $\mc H^{m'} \subseteq \mc H^{m}$ is compact if $m < m'$, and therefore balls in $\mc H^{-m'}$ are compact in $\mc H^m$. Therefore, for $m' \in (m,-1/2)$ the set $\{ \|X\|_{\mc H^{m'}} \leq M\}$ is compact in $\mc H^m$. From the previous estimate we see that
\[
\muss\big( \|X^n\|_{\mc H^{m'}} > M\big) \leq \frac{C(m')}{M^2}.
\]
Taking $M \to \infty$, we conclude that the sequence $\{X^n; n \in \bb N\}$ is tight in $\mc H^{m}$. 

The idea is to prove a limit theorem for the \emph{dynamical} fluctuation field, and to derive Theorem \ref{t3} from such result. Let $(X^n_t; t \geq 0)$ be the process given by
\begin{equation}
\label{fluctuations}
X_t^n(f) := X^n(\eta^n(t),f) = \frac{1}{n^ {d/2}} \sum_{x \in \tdn} (\eta_x^n(t) - \rast) f \big( \tfrac{x}{n} \big) 
\end{equation}
for every $f \in \mc C^\infty(\bb T^d; \bb R)$. The following result is a version of \cite[Theorem 2.4]{JarMen} for the model considered here, and it holds for dimensions $d \leq 3$:

\begin{proposition}
\label{noneq}
Let $(\mu_0^n; n \in \bb N)$ be a sequence of probability measures on $\Omega_n$ such that:
\begin{itemize}
\item
[i)] $\displaystyle{\sup_{n \in \bb N} H(\tfrac{d \mu_0^n}{d \nurnast} ; \nurnast) <+\infty}$, 

\item
[ii)] $X^n \to X_0$ in $\mc H^{-m}$ in law with respect to $\mu_0^n$ for some $m > d/2$.
\end{itemize}
For $d \leq 3$, the sequence $(X_t^n; t \geq 0)_{n \in \bb N}$ converges in the sense of finite-dimensional distributions to the process $(X_t; t \geq 0)$, solution of the equation
\begin{equation}
\label{SPDE}
\partial_t X_t = \Delta X_t + F' (\rast) X_t + \sqrt{2\chi(\rast)} \nabla \cdot \dot{\mc W}^1 + \sqrt{G(\rast)} \dot{\mc W}^2,
\end{equation}
with initial condition $X_0$, where $\dot{\mc W}^1$ is an $\bb R^d $-valued white noise, $\dot{\mc W}^2$ is a real-valued white noise and $\dot{\mc W}^i$, $i=1,2$ are independent.
\end{proposition}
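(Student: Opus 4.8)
The plan is to derive \eqref{SPDE} from the microscopic dynamics by the martingale (Holley--Stroock) method, using Theorem \ref{main} to dispose of the nonlinear term. Fix $f \in \mc C^\infty(\bb T^d; \bb R)$. By Dynkin's formula,
\[
X_t^n(f) = X_0^n(f) + \int_0^t L_n X_s^n(f)\, ds + M_t^n(f),
\]
where $M_t^n(f)$ is a martingale. Applying $L_n = L_n^{\mathrm{ex}} + L_n^{\mathrm{r}}$ to the function $\eta \mapsto X^n(\eta, f)$ and centering around $\rast$ exactly as in the computation \eqref{incahuasi}, the exclusion part produces $X_s^n(\Delta_n f)$, where $\Delta_n f(\tfrac{x}{n}) := n^2 \sumyx (f(\tfrac{y}{n}) - f(\tfrac{x}{n}))$ is the discrete Laplacian, while the reaction part splits into a linear and a quadratic piece in the variables $\bareta_x$. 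Since $F(\rast) = 0$ the constant term vanishes, the linear piece equals $F'(\rast)\, X_s^n(f)$ up to a neighbour-averaging correction, and the quadratic piece is
\[
W_s^n(f) := -\frac{\lambda}{2d}\, \frac{1}{n^{d/2}} \sumx \sumyx \bareta_x \bareta_y\, f\big(\tfrac{x}{n}\big).
\]
I would first check that $X_s^n(\Delta_n f - \Delta f)$ and the neighbour-averaging correction vanish in $\mc L^2$: the entropy inequality \eqref{loa} combined with the sub-Gaussian bound used in the proof of Theorem \ref{t4} gives $\bb E_{\mu_s^n}[X_s^n(g)^2] \leq C \|g\|_{\ell^2_n}^2 \big(1 + H(f_s^n; \nurnast)\big)$, and since those correction test functions have $\ell^2_n$-norm of order $n^{-2}$, the bound tends to $0$ for $d \leq 3$ once one inserts the entropy estimate $H(f_s^n; \nurnast) \leq C n^{d-2} g_d(n)$, uniform in $t$, obtained exactly as in the proof of Theorem \ref{t2} by combining Yau's inequality \eqref{Yau} with Theorem \ref{main}.

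The heart of the argument, and the main obstacle, is the Boltzmann--Gibbs principle $\int_0^t W_s^n(f)\, ds \to 0$, and this is exactly where the restriction $d \leq 3$ enters. I would follow the relative entropy approach of \cite{JarMen} and apply Theorem \ref{main} with $g_x^i = -\tfrac{\lambda}{2d}\, n^{-d/2} f(\tfrac{x}{n})$, so that both $\|g\|_\infty$ and $\mc A(\|g\|_\infty)$ are of order $n^{-d/2}$. The two energy terms $\tfrac14 \int \Gamma_n^{\mathrm{ex}} \sqrt{f_s^n}\, d\nurnast$ and $C\kappa(\rast) \mc A(\|g\|_\infty) \int \Gamma_n^{\mathrm{r}} \sqrt{f_s^n}\, d\nurnast$ that the main lemma produces are designed to be absorbed against the Dirichlet form appearing in the variational estimate for the time integral of $W_s^n(f)$, so the only surviving contribution is the error $C\, \mc A(\|g\|_\infty)\, n^{d-2} g_d(n)$, which is of order $n^{d/2 - 2} g_d(n)$ and therefore tends to $0$ if and only if $d \leq 3$. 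It is here that hypothesis i) is used in an essential way, through the entropy control of $H(f_s^n; \nurnast)$.

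Next I would identify the limiting noise through the predictable quadratic variation $\langle M^n(f)\rangle_t = \int_0^t \big(\Gamma_n^{\mathrm{ex}} + \Gamma_n^{\mathrm{r}}\big)\big(X^n(\cdot, f)\big)(\eta_s^n)\, ds$. A direct computation gives
\[
\Gamma_n^{\mathrm{ex}}\big(X^n(\cdot,f)\big) = \frac{1}{n^d} \sumxy (\eta_y - \eta_x)^2 \big(n\,(f(\tfrac{y}{n}) - f(\tfrac{x}{n}))\big)^2, \qquad \Gamma_n^{\mathrm{r}}\big(X^n(\cdot,f)\big) = \frac{1}{n^d} \sumx c_x(\eta)\, f\big(\tfrac{x}{n}\big)^2.
\]
These are empirical averages of local functions, so the local-equilibrium replacement (again via the entropy bound and a law of large numbers) lets me replace $(\eta_y - \eta_x)^2$ by its mean $2\chi(\rast)$ and $c_x(\eta)$ by its mean $G(\rast)$ under $\nurnast$. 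Hence $\langle M^n(f)\rangle_t \to t\,\big(2\chi(\rast) \int_{\bb T^d} |\nabla f|^2 + G(\rast) \int_{\bb T^d} f^2\big)$, which matches the sum of the variances of the two noises in \eqref{SPDE}. Because the exclusion and reaction clocks never ring simultaneously, the two martingale contributions have vanishing cross-variation, yielding the independent noises $\dot{\mc W}^1$ and $\dot{\mc W}^2$; the jumps of $M_t^n(f)$ are of order $n^{-d/2}$, so the Lindeberg condition holds and the martingale central limit theorem gives convergence of $M^n(f)$ to a continuous Gaussian martingale with the stated variance.

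Finally, assembling these pieces shows that every limit point of $(X_t^n; t \geq 0)$ solves the martingale problem associated with the linear equation \eqref{SPDE}, with initial condition $X_0$ furnished by hypothesis ii). Since \eqref{SPDE} is linear with deterministic coefficients and additive Gaussian noise, this martingale problem is well posed and its unique solution is the Gaussian (Ornstein--Uhlenbeck) process whose finite-dimensional distributions are thereby determined; convergence in the sense of finite-dimensional distributions follows, for instance by passing to the limit in the Holley--Stroock exponential martingale built from $e^{i X_t^n(f)}$. This is precisely the content of \cite[Theorem 2.4]{JarMen} specialized to the present model, the only model-dependent inputs being the generator computation above, the log-Sobolev inequality of Lemma \ref{logso}, and the main lemma, Theorem \ref{main}.
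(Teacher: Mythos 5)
First, a point of reference: the paper itself contains no proof of Proposition \ref{noneq} --- it is imported verbatim as ``a version of \cite[Theorem 2.4]{JarMen} for the model considered here'' --- so your attempt can only be measured against the machinery the paper actually builds around it (Sections \ref{s3} and \ref{s4}). Measured that way, your sketch gets the analytic core right: the Dynkin decomposition with the quadratic term $V$, the Boltzmann--Gibbs step performed by feeding Theorem \ref{main} into the variational formula of Proposition \ref{Feyn}, with surviving error of order $n^{d/2-2}g_d(n)$ --- this is exactly the paper's estimate \eqref{est4}, and it is indeed the place where $d \leq 3$ and hypothesis i) enter --- and the quadratic variation computation leading to the limit $2\chi(\rast)\|\nabla f\|_{L^2}^2 + G(\rast)\|f\|_{L^2}^2$, which matches the two noises in \eqref{SPDE}.

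The genuine gap is the endgame. You identify the limit through the martingale problem with \emph{fixed} test functions, which presupposes tightness of the path process $(X_t^n; t \geq 0)$ (or at least of its finite-dimensional laws); this is never established, and the tools you invoke cannot establish it for $d=2,3$. Hypothesis i) controls the entropy only at time zero: the Yau/Gronwall argument gives at positive times only $H(f_s^n;\nurnast) \leq C\big(1 + n^{d-2}g_d(n)\big)$, so your moment bound $\bb E_{\mu_s^n}[X_s^n(g)^2] \leq C \|g\|_{\ell^2_n}^2\big(1+H(f_s^n;\nurnast)\big)$, while sufficient for your $O(n^{-2})$-sized correction functions, diverges like $n^{d-2}g_d(n)$ when $g$ is an order-one test function. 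Hence there is no uniform control of $X_s^n(f)$ or $X_s^n(\Delta f)$ at positive times, no limit points to feed into the martingale problem, and no way to close a Gronwall bound on second moments either, since each application of the generator changes the test function. This is precisely the obstruction the paper emphasizes and circumvents with Duhamel's formula: taking the time-dependent test functions $g_{s,t} = P_{t-s}f$ yields the decomposition \eqref{quilpue}, in which the field at positive times is only ever tested against functions of size $O(n^{-2})$ (where the degraded entropy bound does suffice), while the remaining terms are $X_0^n(P_t f)$ (handled by hypothesis ii)), the Boltzmann--Gibbs error (vanishing for $d \leq 3$), and the martingale $\mc M_{t,t}^n(f)$ (handled by the martingale CLT, Proposition \ref{p3}, as you describe); finite-dimensional convergence then follows with no path-space tightness at all. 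In $d=1$ your route can be salvaged because $n^{d-2}g_d(n) = O(1)$ keeps the entropy, and hence the moments, bounded at all times; for $d=2,3$ the fixed-test-function martingale problem must be replaced by this mild/Duhamel argument, which is also how \cite{JarMen} and Section \ref{s4} of the paper proceed.
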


We start proving a lemma:

\begin{lemma}
\label{l3.1}
For every dimension $d \geq 1$ and every $m > d/2$, the equation \eqref{SPDE} has a unique stationary solution supported in $\mc H^{-m}$.
\end{lemma}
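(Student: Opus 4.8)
The plan is to solve the linear stochastic heat equation \eqref{SPDE} explicitly in Fourier space and to identify the unique stationary law as a Gaussian measure on $\mc H^{-m}$. Writing $X_t = \sum_{k \in \bb Z^d} \hat X_t(k) e_k$ with $e_k(x) := e^{2\pi i k x}$, the operator $\Delta + F'(\rast)$ is diagonal: in the $k$-th mode it acts as multiplication by $-\gamma_k$, where $\gamma_k := 4\pi^2 \|k\|^2 - F'(\rast)$. Since $F'(\rast) < 0$, we have $\gamma_k \geq -F'(\rast) > 0$ uniformly in $k$, so every mode is strictly contractive. Each Fourier mode $\hat X_t(k)$ therefore satisfies a scalar (complex) Ornstein--Uhlenbeck equation
\[
d \hat X_t(k) = -\gamma_k \hat X_t(k)\, dt + dN_t(k),
\]
driven by the $k$-th Fourier coefficient of the noise $\sqrt{2\chi(\rast)}\, \nabla \cdot \dot{\mc W}^1 + \sqrt{G(\rast)}\, \dot{\mc W}^2$. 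The gradient structure of the first noise contributes a factor $\|2\pi i k\|^2 = 4\pi^2 \|k\|^2$ to its spectral intensity, so the total noise intensity in mode $k$ is $\sigma_k^2 := 8\pi^2 \chi(\rast) \|k\|^2 + G(\rast)$.

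First I would make the notion of solution precise, working mode by mode in the Gelfand triple $\mc H^m \subset \mc L^2 \subset \mc H^{-m}$ and testing \eqref{SPDE} against $e_k$, so that solutions are understood in the sense of the standard variational (or mild/Duhamel) formulation for SPDEs driven by space-time white noise. The stationary OU law in each mode is centered Gaussian with variance $\sigma_k^2 / (2\gamma_k)$, and distinct modes are independent because the driving noises are independent across Fourier frequencies. Hence a stationary solution must have $\hat X(k)$ independent centered Gaussians with
\[
\bb E\big[ |\hat X(k)|^2 \big] = \frac{\sigma_k^2}{2 \gamma_k} = \frac{8\pi^2 \chi(\rast) \|k\|^2 + G(\rast)}{8 \pi^2 \|k\|^2 - 2 F'(\rast)},
\]
which, after splitting off the constant $\chi(\rast)$, matches the covariance \eqref{caldera} in Theorem \ref{t3}. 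Uniqueness then follows because the stationary law of a strictly contractive OU process is unique in each mode, and the joint law is determined by the independent marginals.

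Next I would verify that this candidate stationary law is genuinely supported in $\mc H^{-m}$ for every $m > d/2$, which reduces to checking summability of the variances against the Sobolev weights: one needs
\[
\sum_{k \in \bb Z^d} (1 + \|k\|^2)^{-m}\, \bb E\big[ |\hat X(k)|^2 \big] < \infty.
\]
Since $\bb E[|\hat X(k)|^2]$ is bounded uniformly in $k$ (it tends to $\chi(\rast)$ as $\|k\| \to \infty$), this is comparable to $\sum_k (1+\|k\|^2)^{-m}$, which converges precisely when $m > d/2$; this both confirms the support claim and shows existence of the stationary solution as an $\mc H^{-m}$-valued process. Finally I would record that existence and uniqueness of the full $\mc H^{-m}$-valued stationary process (not merely of its finite-dimensional marginals) follows from assembling the independent stationary modes, using the summability above to guarantee the resulting random series converges in $\mc H^{-m}$ almost surely.

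The main obstacle is not any single estimate but the bookkeeping of making the mode-by-mode construction rigorous as a statement about an $\mc H^{-m}$-valued process: one must confirm that the formal Fourier solution defines a bona fide solution in the chosen functional-analytic framework (in particular that the series converges in the right space and solves \eqref{SPDE} in the variational sense), and that uniqueness at the level of the joint stationary law—rather than merely mode by mode—is justified. The strict positivity $\gamma_k \geq -F'(\rast) > 0$, which is exactly the stability $F'(\rast) < 0$ noted after \eqref{antofagasta}, is what makes both the contraction argument and the summability work; I would flag that this positivity is the structural input the whole argument rests on.
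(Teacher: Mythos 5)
Your construction is correct and rests on the same mechanism as the paper's proof: the spectral decomposition of $\Delta + F'(\rast)$, the uniform contraction $\gamma_k \geq -F'(\rast) > 0$, and the boundedness in $k$ of the stationary variances, which gives summability against $(1+\|k\|^2)^{-m}$ exactly when $m > d/2$; your per-mode variance $\sigma_k^2/(2\gamma_k)$ agrees with \eqref{caldera} after the rearrangement you indicate. The organization differs, though: the paper never diagonalizes the dynamics, but works with the mild (Duhamel) formula \eqref{santiago} at the level of test functions, lets $t \to \infty$ to obtain the explicit stochastic-integral representation \eqref{lacalera} of $X_\infty$ (existence), and only then passes to Fourier via Parseval to compute the covariance \eqref{sanfelipe}. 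Where the two routes genuinely diverge is uniqueness, and it is precisely the step you flagged at the end. Writing that ``the joint law is determined by the independent marginals'' is not yet a proof: a priori, an arbitrary stationary solution could have its Fourier modes coupled through its (stationary) initial law, even though the driving noises are independent across modes, so independence of the modes under the stationary law must be \emph{derived}, not assumed. The paper closes this in one line: if $Y$ is any stationary law, the solution started from $Y$ equals $Y$ in law for every $t$, yet converges in law to $X_\infty$ because the initial-condition term $X_0(P_t f)$ decays exponentially; hence $Y = X_\infty$ as laws on $\mc H^{-m}$, joint structure included. You can run the same argument mode by mode --- the terms $e^{-\gamma_k t} \hat X_0(k)$ vanish simultaneously for all $k$, so the joint law of all modes converges to the product Gaussian --- but the point is that independence across modes is a \emph{conclusion} of this forgetting-the-initial-condition argument, not an input to it. With that single substitution your proof is complete, and in fact your mode-by-mode OU picture buys a slightly more concrete description of the limit, while the paper's semigroup formulation generalizes more readily to drift operators that are not diagonalized by the Fourier basis.
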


\begin{proof}
Let $(P_t; t \geq 0)$ be the semigroup generated by the operator $\Delta +F'(\rast)$. For every $f \in \mc C^\infty(\bb T^d; \bb R)$ and every $t \geq 0$,
\begin{equation}
\label{santiago}
X_t(f) = X_0(P_t f) + \int_0^t \sqrt{2 \chi(\rast)} d \mc W_s^1(P_{t-s} \nabla f) + \int_0^t \sqrt{G(\rast)} d \mc W_s^2(P_{t-s} f).
\end{equation}
Since $F'(\rast)<0$, we see that $P_t f \to 0$ as $t \to \infty$ exponentially fast in $\mc H^m$ for every $m \in \bb R$. In consequence, 
the random family $(X_\infty(f); f \in \mc C^\infty(\bb T^d; \bb R))$ given by
\[
X_\infty(f) := \lim_{t \to \infty} X_t(f)
\]
for every $f \in \mc C^\infty(\bb T^d; \bb R)$ is well defined in law. Moreover, $X_\infty$ is a well-defined, $\mc H^{-m}$-valued random variable for $m$ large enough, and it satisfies the identity
\begin{equation}
\label{lacalera}
X_\infty(f)  = \int_0^\infty \Big( \sqrt{2 \chi(\rast)} d \mc W_t^1(P_{t} \nabla f) +\sqrt{G(\rast)} d \mc W_t^2(P_{t} f)\Big), 
\end{equation}
where this identity is understood as an identity in law for $\mc H^{-m}$-valued random variables. We claim that $X_\infty$ is the unique stationary state of \eqref{SPDE}. In fact, if there is another stationary state $Y_\infty$, the solution of \eqref{SPDE} with initial condition $Y$ on one hand satisfies $X_t = Y$ in law for every $t \geq 0$ and on the other hand it converges to $X_\infty$. 

The right-hand side of \eqref{lacalera} is a centered Gaussian process. Therefore, it is characterized by its covariance operator. Observe that
\[
\bb E[X_\infty(f)^2] = 2 \chi(\rast) \int_0^\infty \|P_t \nabla f\|^2_{ L^2} dt + G(\rast) \int_0^\infty \|P_t f\|^2_{ L^2} dt.
\]
In Fourier space, 
\[
\widehat{P_t f}(k)= e^{ - 4\pi^2 \|k\|^2 t + F'(\rast) t} \hat{f}(k).
\]
Therefore, by Parseval's identity,
\begin{equation}
\label{sanfelipe}
\bb E[X_\infty(f)^2] = \sum_{k \in \bb Z^d} \big|\hat{f}(k)\big|^2 \Big( \frac{4 \pi^2 \|k\|^2 \chi(\rast)}{4 \pi^2 \|k\|^2-F'(\rast)} + \frac{G(\rast)}{8 \pi^2 \|k\|^2 -2 F'(\rast)}\Big),
\end{equation}
which is equal to the expression given in \eqref{caldera} for the limit process $X$. Observe that
\begin{equation}
\label{llullaillaco}
\lim_{\|k\| \to \infty} \Big( \frac{4 \pi^2 \|k\|^2 \chi(\rast)}{4 \pi^2 \|k\|^2-F'(\rast)} + \frac{G(\rast)}{8 \pi^2 \|k\|^2 -2 F'(\rast)}\Big) = \chi(\rast),
\end{equation}
and in particular the coefficients in \eqref{sanfelipe} are bounded in $k$. Therefore, $\bb E[ \|X_\infty\|_{\mc H^{-m}}^2] <+\infty$ for every $m >d/2$ and in particular $X_\infty$ is a well-defined random variable in $\mc H^{-m}$ for every $m > d/2$.
\end{proof}
Let us go back to the proof of Theorem \ref{t3}. We have already proved that the sequence $(X^n; n \in \bb N)$ is tight in $\mc H^{-m}$ with respect to $\muss$, for $m > 1/2$. Let $n'$ be a subsequence for which $X^{n'}$ converges in law with respect to the topology of $\mc H^{-m}$ to a random variable $\widetilde{X}$. By Theorem \ref{t2}, the measures $\musspr$ satisfy the conditions of Proposition \ref{noneq}. Since $\musspr$ is stationary, $\widetilde{X}$ is a stationary solution of \eqref{SPDE}. By Lemma \ref{l3.1}, $\widetilde{X} = X_\infty$. We have just showed that $(X^n; n \in \bb N)$ is relatively compact and it has a unique accumulation point $X_\infty$. We conclude that $X^n$ converges to $X_\infty$, as we wanted to show.

\label{d1}

\subsection{Case $d=2,3$}

In this section we prove Theorem \ref{t3} in the case $d=2,3$.
In this case, the proof of Theorem \ref{t3} presented in the previous section does not work, because the entropy bound of Theorem \ref{t2} is not strong enough to derive tightness of the sequence $(X^n; n \in \bb N)$ with respect to $(\muss ; n \in \bb N)$.  The idea is to adapt the proof of Lemma \ref{l3.1}, \emph{directly} to the process $(X_t^n; t \geq 0)$ defined in \eqref{fluctuations}. In order to do that, let us introduce the Dynkin's martingales associated to $(X_t^n; t \geq 0)$. For every $T >0$ and every $g \in \mc C^{1,\infty}( [0,T] \times \bb T^d ; \bb R)$, the process $(\mc M_t^n(g); t \in [0,T])$ given by
\begin{equation}
\label{machali}
\mc M_t^n(g) := X_t^n(g_t) - X_0^n(g_0) - \int_0^t (\partial_s+L_n) X_s^n(g_s) ds
\end{equation}
for every $t \in [0,T]$, is a martingale of quadratic variation
\begin{equation}
\label{limache}
\< \mc M^n(g)\>_t = \int_0^t \Gamma_n X_s^n(g) ds.
\end{equation}
Our aim is to choose $g_s$ in such a way that equations \eqref{machali}, \eqref{limache} are approximate versions of \eqref{santiago}.
After some explicit computations, we see that 
\begin{equation}
\label{olmue}
(\partial_s+L_n) X^n(g_s) = X^n((\partial_s + \bb L_n)g_s) + \frac{1}{n^{d/2}} V( \vec g_{s})
\end{equation}
where the linear operator $\bb L_n$ is defined as
\[
\bb L_n f \big( \tfrac{x}{n} \big) := \Big( n^2 + \frac{\lambda(1-\rast)}{2d} \Big) \sumxy \, \big( f \big( \tfrac{y}{n} \big) - f \big( \tfrac{x}{n} \big) \big) + F'(\rast) f \big( \tfrac{x}{n} \big)
\]
for every $f \in \mc C(\bb T^d; \bb R)$ and every $x \in \bb T^d_n$, where $V$ was defined in \eqref{cachiyuyo} and where $\vec g_{s}$ is defined as
\[
g^i_{x,s} := -\frac{\lambda}{2 d} \big( g_{s} \big( \tfrac{x}{n} \big) + g_{s} \big(\tfrac{x+e_i}{n}\big)\big)
\]
for every $i \in \{1,\dots,d\}$ and every $x \in \tdn$.
We also see that
\begin{equation}
\label{quillota}
\Gamma_n X^n(g_s) = \frac{1}{2 n^d} \sumxy  (\eta_y - \eta_x)^2 n^2 \big( g_s\big( \tfrac{y}{n} \big) - g_s\big( \tfrac{x}{n} \big) \big)^2 
		+ \frac{1}{n^d} \sumx c_x(\eta) g_s\big( \tfrac{x}{n} \big)^2.
\end{equation}
Observe that $\bb L_n$ is a discrete approximation of the operator $\bb L:= \Delta + F'(\rast)$ used in the proof of Lemma \ref{l3.1}. For $f \in \mc C^\infty(\bb T^d; \bb R)$ and $T >0$, let $(g_{t,T} ; T >0, t \in [0,T])$ be given by $g_{t,T} := P_{T-t} f$ for every $T >0$ and every $t \in [0,T]$, where $(P_t; t \geq 0)$ is the semigroup generated by $\Delta + F'(\rast)$. For $T>0$ and $t \in [0,T]$, let $\mc M_{t,T}^n(f):= \mc M_t^n(g_{\cdot,T})$. Observe that $(\mc M_{t,T}^n(f); t \in [0,T])$ is a martingale and observe that
\begin{equation}
\label{quilpue}
\begin{split}
X_T^n(f) 
	&= X_0^n(g_{0,T}) + \int_0^T (\partial_t + L_n) X_t^n(g_{t,T}) dt + \mc M_{T,T}^n(f)\\
	&= X_0^n(g_{0,T}) + \int_0^T X_t^n((\partial_t + \bb L_n) g_{t,T}) dt + I_T^n(V;f) + \mc M_{T,T}^n(f),
\end{split}
\end{equation}
where
\[
I_T^n(V; f) :=  \int_0^T \frac{1}{n^{d/2}} V(\vec{g}_{t,T}) dt.
\]
Assuming that $X^n_0$ has law $\muss$, this equation is the discrete version of \eqref{cachiyuyo} we are aiming for. The proof of Theorem \ref{t3} will follow from a careful analysis of equation \eqref{quilpue}. This analysis will be divided into three parts. In Section \ref{s4.2.1} we will derived various estimates that will then be used in Section \ref{s4.2.2} to prove tightness of the field $X^n$ and in Section \ref{s4.2.3} to prove convergence of finite-dimensional laws of $X^n$.

\subsection{Estimates}
\label{s4.2.1}
Our aim is to prove estimates in $L^2(\bb P_\muss^n)$ of each of the terms appearing in \eqref{quilpue}, except $I_T^n(V; f)$, which we can only estimate in $L^1(\bb P_\muss^n)$. Since $F' (\rast)<0$, the semigroup $(P_t; t \geq 0)$ converges to 0 with respect to the uniform norm with exponential rate $-F' (\rast)$. We start estimating the second moment of $\bb E_{\muss}[X_0^n(g_{0,T})]$.
From Theorem \ref{t1},
\begin{equation}
\label{est1}
\begin{split}
\bb E^n_{\muss}[ X_0^n(g_{0,T} )^2] 
		&\leq \frac{C g_d(n)}{n^2} \sumx P_T f \big(\tfrac{x}{n} \big)^2 \\
		&\leq C g_d(n) n^{d-2} \|f\|_\infty^2 e^{2F'(\rast) T}.
\end{split}
\end{equation}
In order to estimate the second moment of $\int_0^T X_t^n((\partial_t + \bb L_n) g_{t,T}) dt$, observe that $(\partial_t + \bb L)g_{t,T}=0$. Therefore, by Taylor's formula,
\[
\big|(\partial_t + \bb L_n) g_{t,T}\big( \tfrac{x}{n} \big) \big| = \big|(\bb L_n - \bb L) g_{t,T}\big( \tfrac{x}{n} \big) \big|  \leq \frac{1} {12 n^2} \big( 6 \lambda(1-\rast) \|D^2 g_{t,T}\|_\infty +d\| D^4 g_{t,T}\|_\infty\big).
\]
Since the differential operator $D$ commutes with $\bb L$, we obtain the bound
\[
\big|(\partial_t + \bb L_n) g_{t,T}\big( \tfrac{x}{n} \big) \big| 
		\leq \frac{C} {12 n^2} \big(  \|D^2 f\|_\infty +\| D^4f\|_\infty\big) e^{F'(\rast)(T-t)}.
\]
Combining this bound with Theorem \ref{t1} and the invariance of $\muss$, we see that for every $\delta \in (0,-F' (\rast))$ and every $\lambda \in [-\lambda_c,\lambda_c]$,
\begin{equation}
\label{est2}
\begin{split}
\bb E_{\muss}^n \Big[\Big| \int_0^T X_t^n((\partial_t + \bb L_n)g_{t,T}) dt \Big|^2\Big] 
		&\leq \int_0^T e^{-2\delta t} dt  \int_0^T e^{2 \delta t} \bb E_{\muss}^n \big[ \big| X_t^n((\partial_t + \bb L_n)g_{t,T}) \big|^2 \big] dt\\
		& \leq \frac{C}{2 \delta} \int_0^T  g_d(n) n^{d-6} (\|D^2 f\|_\infty^2 + \| D^4 f\|_\infty^2) e^{2(\delta+F'(\rast)) t} dt\\
		& \leq C g_d(n) n^{d-6}  (\|D^2 f\|_\infty^2 + \| D^4 f\|_\infty^2).
\end{split}
\end{equation}
The martingale term is easy to estimate. We have that
\begin{equation}
\label{est3}
\begin{split}
\bb E_{\muss}^n[\mc M_{T,T}^n(f)^2] 
		&= \bb E_{\muss}^n \Big[ \int_0^T \Gamma_n X_t^n(g_{t,T})dt \Big] \leq \int_0^T C\big( \| D g_{t,T}\|_\infty^2 + \|g_{t,T}\|_\infty^2\Big) dt\\
		& \leq C\big( \|D f\|_\infty^2 + \| f\|_\infty^2\Big).
\end{split}
\end{equation}
The expectation
$
\bb E_{\muss}^n [ | I_T^n(V; f) | ]
$
is the most difficult to estimate. In fact, we are only able to obtain a first-moment estimate. By the entropy inequality \eqref{loa} and
$
H( \bb P_{\muss}^n | \bb P_{\nurnast}^n )  = H( \muss| \nurnast)
$ (which is a consequence of   Markov's property) we have that
\begin{equation}
\label{sanantonio}
\bb E_{\muss}^n [ | I_T^n(V; f) | ]
		\leq  \frac{1}{\theta} \Big( H(\muss| \nurnast) \!+\! \log \bb E^n_{\nurnast} \big[ \exp \big\{ \theta | I_T^n(V; f) | \big\}\big] \Big).
\end{equation}
Using the elementary bounds $\exp|x|\leq \exp (x)+\exp (-x)$, $\log(a+b) \leq \log 2 + \max\{\log a, \log b\}$ and for the choice 
\[
\theta:= \frac{\lambda_c n^{d/2}}{2 d \rast \sup_{t,T} \|\vec g_{t,T}\|_\infty},
\]
we see that 
\begin{equation}
\begin{split}
\bb E_{\muss}^n [ | I_T^n(V; f) | ]
	\leq \frac{2 d \rast \|\vec g\|_\infty}{\lambda_c n^{d/2}} \Big( H(\muss| \nurnast) + \log 2 + \max_{\pm} \log \bb E^n_{\nurnast} \big[ \exp \big\{ \pm \theta  I_T^n(V; f)   \big\}\big] \Big).
\end{split}
\end{equation}
To estimate the rightmost term in last display we use the following result. 
\begin{proposition}
\label{Feyn}
For every $T >0$ and every $W: [0,T] \times \Omega_n \to \bb R$,
\[
\log \bb E^n_{\nurnast} \Big[ \exp\Big\{ \int_0^T W_t(\eta(t)) dt \Big\} \Big]
	\leq \int_0^T \sup_{f} \Big\{ \int \Big(W_t + \tfrac{1}{2} L_n^\ast \mathbf{1} \Big) f d \nurnast - \int \Gamma_n \sqrt{f} d \nurnast\Big\}dt,
\]
where the supremum is taken over all densities $f$ with respect to $\nurnast$.
\end{proposition}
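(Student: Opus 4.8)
The plan is to represent the exponential moment by the Feynman--Kac formula and then to dominate it by an \emph{instantaneous} variational quantity---the numerical range of the symmetrized generator---integrated in time. Since $\Omega_n$ is finite, every operator below is a matrix and the time-dependent Feynman--Kac semigroup is unambiguous, so no functional-analytic subtleties arise. Let $(u_t;\,t\in[0,T])$ solve the backward equation $\partial_t u_t + (L_n + W_t)u_t = 0$ with terminal datum $u_T = \mathbf 1$, where $W_t$ acts as a multiplication operator. The Feynman--Kac formula gives
\[
\bb E^n_{\nurnast}\Big[\exp\Big\{\int_0^T W_t(\eta(t))\,dt\Big\}\Big] = \int u_0\,d\nurnast = \langle \mathbf 1,\, u_0\rangle_{\mc L^2(\nurnast)} .
\]
By Cauchy--Schwarz and $\|\mathbf 1\|_{\mc L^2(\nurnast)} = 1$, it then suffices to prove $\|u_0\|_{\mc L^2(\nurnast)} \le \exp\{\int_0^T \mu_t\,dt\}$, with $\mu_t$ the variational expression that will appear on the right-hand side.

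To this end I reverse time, setting $w_s := u_{T-s}$, so that $w_0 = \mathbf 1$, $w_T = u_0$, and $\partial_s w_s = (L_n + W_{T-s})w_s$. Differentiating the squared norm and using that the antisymmetric part of $L_n$ contributes nothing to a diagonal quadratic form (that is, $\langle w,L_n w\rangle = \langle w, L_n^{\mathrm s}w\rangle$ with $L_n^{\mathrm s}:=\tfrac12(L_n+L_n^\ast)$),
\[
\frac{d}{ds}\|w_s\|_{\mc L^2(\nurnast)}^2 = 2\,\langle w_s,\,(L_n^{\mathrm s} + W_{T-s})\,w_s\rangle_{\mc L^2(\nurnast)} \le 2\,\mu_{T-s}\,\|w_s\|_{\mc L^2(\nurnast)}^2 ,
\]
where $\mu_t := \sup\{\langle g,(L_n^{\mathrm s}+W_t)g\rangle_{\mc L^2(\nurnast)} : \|g\|_{\mc L^2(\nurnast)}=1\}$ is the numerical range (logarithmic norm) of $L_n^{\mathrm s}+W_t$. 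Gronwall's inequality then yields $\|w_T\|^2 \le \exp\{2\int_0^T \mu_{T-s}\,ds\} = \exp\{2\int_0^T \mu_t\,dt\}$, which is the required bound on $\|u_0\|$.

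It remains to identify $\mu_t$ with the integrand in the statement. Since $L_n^{\mathrm s}+W_t$ is a real symmetric matrix and $\exp\{\tau(L_n^{\mathrm s}+W_t)\}$ is positivity-preserving, the maximizer may be taken nonnegative, so writing $f=g^2$ for a density $f$ with respect to $\nurnast$ is harmless. Then $\langle g,W_t g\rangle = \int W_t f\,d\nurnast$, while the carr\'e-du-champ identity
\[
\int \Gamma_n \sqrt f\, d\nurnast = \int f\, L_n^\ast \mathbf 1\, d\nurnast - 2\int \sqrt f\, L_n^{\mathrm s}\sqrt f\, d\nurnast
\]
(obtained from $\int \Gamma_n(g)\,d\nurnast = \int g^2 L_n^\ast\mathbf 1\,d\nurnast - 2\int g L_n g\,d\nurnast$ together with $\int g L_n g\,d\nurnast = \int g L_n^{\mathrm s} g\,d\nurnast$) expresses the symmetric Dirichlet form $-\int\sqrt f\, L_n^{\mathrm s}\sqrt f\,d\nurnast$ through $\int\Gamma_n\sqrt f\,d\nurnast$ and $\int L_n^\ast\mathbf 1\, f\,d\nurnast$. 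Substituting this into $\mu_t$ and taking the supremum over densities $f$ reproduces the integrand on the right-hand side of the statement, and integrating the Gronwall bound over $[0,T]$ gives the Proposition.

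The one genuine difficulty is that $\nurnast$ is \emph{not} invariant---let alone reversible---for $L_n$, so the classical route of bounding the Feynman--Kac semigroup by the top eigenvalue of a self-adjoint operator is unavailable: the relevant propagator is non-normal and time-dependent, and its operator norm is not controlled by its spectrum. The device that circumvents this is exactly the numerical-range estimate above, since $\frac{d}{ds}\|w_s\|^2$ only feels the symmetric part $L_n^{\mathrm s}+W_t$; this simultaneously handles the non-self-adjointness and, through Gronwall, the time dependence of $W_t$ without requiring any commutation of the generators at different times. The correction term $\tfrac12 L_n^\ast\mathbf 1$ in the final bound is precisely the footprint of this non-reversibility, and it disappears when $\lambda=0$, where $\nurnast$ becomes reversible and the estimate collapses to the standard self-adjoint one.
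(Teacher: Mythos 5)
Your strategy is the standard proof of this estimate and, as far as the route goes, it is the same one behind the references the paper cites for it (the paper itself only gives citations): Feynman--Kac representation, Cauchy--Schwarz against $\mathbf 1$, time reversal, a differential inequality for $\|w_s\|_{\mc L^2(\nurnast)}^2$ in terms of the numerical range of $L_n^{\mathrm s}+W_t$, and Gronwall. Every step up to and including the Gronwall bound is correct, as is your carr\'e-du-champ identity. The gap is in your very last step, where you assert that substituting that identity into $\mu_t$ ``reproduces the integrand on the right-hand side of the statement.'' It does not: your own displayed identity gives
\[
-\int\sqrt f\,L_n^{\mathrm s}\sqrt f\,d\nurnast
=\tfrac12\int\Gamma_n\sqrt f\,d\nurnast-\tfrac12\int L_n^\ast\mathbf 1\,f\,d\nurnast ,
\]
and therefore
\[
\mu_t=\sup_f\Big\{\int\big(W_t+\tfrac12 L_n^\ast\mathbf 1\big)f\,d\nurnast-\tfrac12\int\Gamma_n\sqrt f\,d\nurnast\Big\},
\]
with coefficient $\tfrac12$ on the carr\'e du champ, whereas the proposition has coefficient $1$. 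Since $\int\Gamma_n\sqrt f\,d\nurnast\ge 0$, what your argument establishes is strictly weaker than the statement you set out to prove.

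Moreover, this factor of $2$ cannot be recovered by sharpening your estimates, because with the paper's normalization $\Gamma_n f=L_n(f^2)-2fL_nf$ (which omits the customary factor $\tfrac12$) the proposition as literally written is false. Indeed, take $\lambda=0$, so that $L_n$ is reversible with respect to $\nurnast$ and $L_n^\ast\mathbf 1=0$, and take $W_t=W$ independent of $t$ and non-constant in $\eta$. Then $\tfrac1T\log\bb E^n_{\nurnast}\big[\exp\{\int_0^TW(\eta(t))dt\}\big]$ converges, as $T\to\infty$, to the top eigenvalue of the self-adjoint operator $L_n+W$ on $\mc L^2(\nurnast)$, which equals $\sup_f\{\int Wf\,d\nurnast-\tfrac12\int\Gamma_n\sqrt f\,d\nurnast\}$; and this strictly exceeds $\sup_f\{\int Wf\,d\nurnast-\int\Gamma_n\sqrt f\,d\nurnast\}$ for non-constant $W$, because equality of the two suprema would force the maximizer to satisfy $\int\Gamma_n\sqrt f\,d\nurnast=0$, i.e.\ $f=\mathbf 1$, while the first supremum is strictly larger than $\int W\,d\nurnast$. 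So your computation is in fact the correct one: the bound holds with $\tfrac12\int\Gamma_n\sqrt f\,d\nurnast$ (equivalently, with the conventionally normalized carr\'e du champ, which is how the cited sources state it, via the Dirichlet form $-\int\sqrt f\,L_n\sqrt f\,d\nurnast$), and the statement as printed carries a factor-of-two inconsistency with the paper's definition of $\Gamma_n$. The honest conclusion of your proof is this $\tfrac12$-version together with a remark flagging that mismatch; note that the $\tfrac12$-version is all that is needed downstream, since \eqref{laligua} provides exactly the room $C\kappa(\rast)\mc A\big(\tfrac{\lambda}{d\rast}\big)<\tfrac12$.
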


This proposition corresponds to Lemma A.1.7.2 of \cite{KipLan} in the case in which the measure $\nurnast$ is reversible with respect to $L_n$ (and therefore $L_n^\ast \mathbf{1} =0$). It was observed in \cite[p.78]{BenKipLan} that this estimate is also valid if the measure $\nurnast$ is invariant but not reversible.  A complete proof of Proposition \ref{Feyn} can be found in \cite[Lemma 3.3]{Sim} or  \cite[Lemma 9.1]{JarLan}.

The idea is to take $W_t = \theta n^{-d/2} V(\vec g_{t,T})$ in Proposition \ref{Feyn} and then to use Theorem  \ref{t4} to turn the variational formula into a quantitative estimate. Observe that for $W_t $, by \eqref{incahuasi} we get
\[
W_t + \frac{1}{2} L_n^\ast \mathbf{1} = V \big( \theta n^{-d/2} \vec g_{t,T} + \frac{\lambda}{2 d \rast} \vec{1}).
\]
Therefore, by Theorem \ref{main},
\[
\begin{split}
\int \!\!\Big( W_t + \frac{1}{2} L_n^\ast \mathbf{1} \Big) f d \nurnast\!\! -\!\! \int \Gamma_n \sqrt{f} d \nurnast
		 &\leq \Big( C \kappa(\rast) \mc A\Big( \frac{\theta }{n^{d/2}} \|\vec g_{t,T}\|_\infty + \frac{\lambda}{2d \rast}\Big) -1 \Big) \int \Gamma_n^r \sqrt{f}  d \nurnast\\
		 & \quad + C \mc A\Big( \frac{\theta }{n^{d/2}} \|\vec g_{t,T}\|_\infty + \frac{\lambda}{2 d \rast}\Big) g_d(n) n^{d-2},
\end{split} 
\]
for every $\lambda \in [-\lambda_c,\lambda_c]$, where the constant $\lambda_c$ was chosen in \eqref{laligua}. From our choice of $\theta$ 
we conclude that
\begin{equation*}
\int \Big( W_t + \frac{1}{2} L_n^\ast \mathbf{1} \Big) f d \nurnast - \int \Gamma_n \sqrt{f} d \nurnast \leq C \mc A \Big( \frac{\lambda_c}{d \rast}\Big) g_d(n) n^{d-2}.
\end{equation*} From the right-hand side of \eqref{laligua} we obtain that
\begin{equation}
\label{est4}
\begin{split}
\bb E_{\muss}^n [ | I_T^n(V; f) | ] \leq C \| f \|_\infty  g_d(n)n^{d/2 -2} (1+T).
\end{split}
\end{equation}

\subsection{Tightness}
\label{s4.2.2}

In this section we prove tightness of the sequence $(X^n; n \in \bb N)$ with respect to the strong topology of $\mc H^{-m}$ for $m$ large enough. By Prohorov's theorem, $(X^n; n \in \bb N)$ is tight in $\mc H^{-m}$ if for every $\epsilon >0$ there exists a compact set $K_\epsilon$ in $\mc H^{-m}$ such that
\[
\muss( X^n \notin K_\epsilon) \leq \epsilon
\]
for every $n \in \bb N$. It can be shown that whenever $m < m'$, the inclusion $\mc H^{-m} \subseteq \mc H^{-m'}$ is compact, which means that closed balls in $\mc \mc H^{-m}$ are compact in $\mc H^{-m'}$. Therefore, in order to prove tightness of $(X^n; n \in \bb N)$ in $\mc H^{-m'}$, it is enough to show that
\[
\lim_{M \to \infty} \sup_{n \in \bb N} \muss(\|X^n\|_{\mc H^{-m}} > M) =0
\]
for some $m < m'$. Observe that for every $T >0$, the law of $X^n$ under $\muss$ is equal to the law of $X_T^n$ under $\bb P_\muss^n$. Therefore, it is enough to estimate the probabilities $\bb P_\muss^n( \|\cdot\|_{\mc H^{-m}} > M)$ for each of the terms on the right-hand side of \eqref{quilpue}.

Let $Y$ be a random variable with values in $\mc H^{-m}$. We have that
\[
\bb P_\muss^n(\|Y\|_{\mc H^{-m}} > M) \leq M^{-2} \bb E_\muss^n[ \|Y\|^2_{\mc H^{-m}}].
\]
Observe that
\[
|\hat Y(k)| \leq |Y(\cos(2 \pi k x))| + | Y(\sin(2 \pi kx))|. 
\]
Therefore,
\[
\bb E_\muss^n[ \|Y\|_{\mc H^{-m}}^2] \leq \sum_{k \in \bb Z^d} (1+\|k\|^2)^{-m}  \big(Y(\cos(2 \pi k x))^2 +  Y(\sin(2 \pi kx))^2 \big).
\]
The relation
\[
f \mapsto X_0^n(P_T f)
\]
defines by duality a random variable in $\mc H^{-m}$, which for simplicity we just denote by $X_0^n(g_{0,T})$.
Using \eqref{est1}, we see that 
\[
\begin{split}
\bb E_\muss^n[\|X_0^n(g_{0,T})\|_{\mc H^{-m}}^2] 
	&\leq C g_d(n) n^{d-2} e^{2 F'(\rast) T} \sum_{k \in \bb Z^d} (1+\|k\|^2)^{-m}\\
	&\leq C g_d(n) n^{d-2} e^{2 F'(\rast) T}
\end{split}
\]
for every $m > d/2$. Recall that our aim is to provide a uniform bound for $\bb E_\muss^n[\|X_0^n(g_{0,T})\|_{\mc H^{-m}}^2]$. This is the case if we choose
\begin{equation}
\label{tn}
T = T_n := \frac{\log ( g_d(n) n^{d-2})}{-2 F'(\rast)}.
\end{equation}
Now let us estimate the second moment in $\mc H^{-m}$ of the process 
\begin{equation}
\label{llayllay}
f \mapsto \int_0^T X_t^n((\partial_t+\bb L_n) g_{t,T}) dt.
\end{equation}
Observe that for every $k, k' \in \bb Z^d$,
\[
\sin(2\pi (k+nk')x) = \sin(2 \pi kx), \quad \cos(2\pi (k+nk')x) = \cos(2 \pi kx).
\]
Therefore, $\hat X^n(k+nk') = \hat X^n(k)$.  The same identity holds for all the processes appearing in \eqref{quilpue} and in particular for the process defined in \eqref{llayllay}.
Take $k \in \bb Z^d$ such that $\|k\| \leq n$. For $f(x) = \sin(2 \pi k x)$ or $f = \cos(2 \pi k x)$, \eqref{est2} gives the bound
\[
\bb E_{\muss}^n \Big[ \Big|  \int_0^T X_t^n((\partial_t+\bb L_n) g_{t,T}) dt \Big|^2\Big] \leq C g_d(n) n^{d-6} \|k\|^8.
\]
For $k \in \bb N$ with $\|k\| >n$ there exists $k' \in \bb Z^d$ such that $\|k -n k' \|\leq n$. Therefore, \eqref{est2} gives us the bound
\[
\bb E_{\muss}^n \Big[ \Big|  \int_0^T X_t^n((\partial_t+\bb L_n) g_{t,T}) dt \Big|^2\Big] \leq C g_d(n) n^{d-6} n^8 \leq C g_d(n) n^{d+2}.
\]
We conclude that 
\begin{equation}
\label{rancagua}
\begin{split}
\bb E_{\muss}^n \Big[ \Big\|  \int_0^T X_t^n((\partial_t+\bb L_n) g_{t,T}) dt \Big\|^2_{\mc H^{-m}}\Big] 
		&\leq C g_d(n)n^{d-6} \Big( \sum_{\|k\| \leq n} \frac{\|k\|^8}{(1+\|k\|^2)^{m}}\\
		&\quad + \sum_{\|k\| > n} \frac{n^8}{(1+\|k\|^2)^{m}}\Big)\\
		&\leq C g_d(n) n^{2d+2 -2m},
\end{split}
\end{equation}
whenever $m > d/2$, so that the sum $\sum_{\|k|>n} (1+\|k\|^2)^{m}$ is finite. In particular, \eqref{rancagua} is bounded in $n$ for $m > 3$ if $d=2$ and for $m \geq 4$ if $d=3$. Observe that the bound is independent of $T$, and in particular it holds for $T=T_n$ as defined in \eqref{tn}.

Now we will prove tightness of $\mc M_{T_n,T_n}^n(f)$ for $T_n$ chosen as in \eqref{tn}. By \eqref{est3}, we have that
\[
\bb E_{\muss}^n\big[ \big\|\mc M_{T,T}^n \big\|_{\mc H^{-m}}^2\big] \leq C \sum_{k \in \bb Z^d} (1+\|k\|^2)^{-m}k^2 \leq C
\]
if $m > 1+ d/2$. Since the constant $C$ does not depend on $T$, tightness is proved. 
Observe that the cut-off introduced in \eqref{rancagua} does not improve the value of $m$.

We are only left to show tightness of the integral term $I_T^n(V)$.
Since we only have $L^1(\bb P_{\muss}^n)$ bounds for $I_T^n(V; f)$, we need to estimate $\bb P_\muss^n(\|I_T^n(V)\|_{\mc H^{-m}} > M)$ in a different way. Let $(p(k); k \in \bb Z^d)$ be a probability measure to be chosen in a few lines. We have that
\[
\begin{split}
\bb P_\muss^n(\|I_T^n(V)\|_{\mc H^{-m}} \geq M ) 
		&= \bb P_\muss^n(\|I_T^n(V)\|_{\mc H^{-m}}^2 \geq M^2 ) \\
		&= \bb P_\muss^n \Big( \sum_{k \in \bb Z^d} |\widehat{I_T^n(V)} (k)|^2 (1+ \|k\|^2)^{-m} \geq \sum_{k \in \bb Z^d} p(k) M^2\Big) \\
		&\leq \sum_{k \in \bb Z^d} \bb P_\muss^n \big( |\widehat{I_T^n(V)}(k)|^2 (1+\|k\|^2)^{-m} \geq p(k) M^2 \big)\\
		&\leq \sum_{k \in \bb Z^d} \frac{(1+\|k\|^2)^{-m/2} \bb E_{\muss}^n [|\widehat{I_T^n(V)}(k)|]}{p(k)^{1/2}M}.
\end{split}
\]
Using \eqref{est4} we obtain the bound
\[
\bb P_\muss^n \big(\|I_T^n(V)\|_{\mc H^{-m}} \geq M \big) \leq C g_d(n) n^{d/2-2}(1+T)\sum_{k \in \bb Z^d} \frac{(1+\|k\|^2)^{-m/2}}{M p(k)^{1/2}}.
\]
Choose $p(k) = c(1+\|k\|^2)^{-d/2+\delta}$ for $\delta >0$, where $c$ is the corresponding normalization constant. We obtain the bound
\[
\begin{split}
\bb P_\muss^n(\|I_T^n(V)\|_{\mc H^{-m}} \geq M ) 
	&\leq C g_d(n) n^{d/2-2}(1+T) \sum_{k \in \bb T^d} (1+\|k\|^2)^{-m/2+d/2+\delta} \\
	&\leq  C g_d(n) n^{d/2-2}(1+T)
\end{split}
\]
for every $m> 3d/2$. We conclude that
\[
\lim_{n \to \infty} \bb P_\muss^n(\|I_{T_n}(W)\|_{\mc H^{-m}} \geq M ) \leq \lim_{n \to \infty} C g_d(n) n^{d/2-2}\Big(1+ \frac{\log ( g_d(n) n^{d-2})}{-2 F'(\rast)}\Big)=0,
\]
from where $I_{T_n}(W)$ converges to $0$ in probability with respect to $L^1(\bb P_{\muss}^n)$. Since convergence in probability implies tightness, $I_{T_n}(W)$ is tight in $\mc H^{-m}$ for $m >3d/2$. We conclude that $(X^n; n \in \bb N)$ is tight in $\mc H^{-m}$ for $m > 3$ in $d=2$ and $m > 9/2$ in $d =3$.
\subsection{Convergence}
\label{s4.2.3}
In this section we will complete the proof of Theorem \ref{t3} by proving the convergence of the finite-dimensional distributions of $(X^n; n \in \bb N)$ to $X_\infty$.

Putting estimates \eqref{est1}, \eqref{est2} and \eqref{est4} into \eqref{quilpue}, we see that
\[
\begin{split}
\bb E^n_{\muss} \big[ \big| X_{T_n}^n(f) - \mc M_{T_n,T_n}^n(f) \big| \big] 
		&\leq Cg_d(n)^{1/2} n^{d/2-1} \|f\|_\infty e^{2 F'(\rast) T_n} \\
		&\quad +C g_d(n)^{1/2} n^{d/2-3} \big( \|D^2 f\|_\infty + \| D^4 f\|_\infty \big)\\
		&\quad \quad + C \|f\|_\infty g_d(n) n^{d/2-2}(1+T_n).
\end{split}
\]
For the choice of $T_n$ made in \eqref{tn}, we see that
\[
\lim_{n \to \infty} \big( X_{T_n}^n(f) - \mc M_{T_n,T_n}^n(f) \big) =0.
\]
Therefore, we have reduced the problem of convergence of $(X^n; n \in \bb N)$ to the problem of convergence of $( \mc M_{T_n,T_n}^n; n \in \bb N)$. We have already proved that the sequence $(\mc M_{T_n,T_n}^n; n \in \bb N)$ is tight. Therefore, we only need to obtain the limits of the sequences $(\mc M_{T_n,T_n}^n(f); n \in \bb N)$ for every test function $f$. We will use the following martingale convergence theorem:
\begin{proposition}
\label{p3} 
Let $(M_t^n; t \in [0,T])_{n \in \bb N}$ be a sequence of $\mc L^2$-martingales. Let
\[
\Delta_t^n := \sup_{0 \leq t \leq T} | M_{t}^n-M_{t-}^n| 
\]
be the size of the largest jump of $(M_t^n; t \in [0,T])$ on the interval $[0,T]$ and let $\Phi:[0,T] \to [0,\infty)$ be a continuous increasing function.
Assume that
\begin{itemize}
\item
[i)]
as $n \to \infty$, $\displaystyle{| \Delta_t^n| \to 0}$ in probability,
\item
[ii)] as $n \to \infty$, $\displaystyle{\<M^n\>_t \to \Phi(t)}$ in probability for every $t \in [0,T]$.
\end{itemize}
Under these conditions,
\[
M_t^n \to M_t
\]
in law with respect to the $J_1$-Skorohod topology of $\mc D([0,T], \bb R)$, where $(M_t; t \geq 0)$ is a continuous martingale of quadratic variation $\<M\>_t = \Phi(t)$ for every $t \in [0,T]$.
\end{proposition}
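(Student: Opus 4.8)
The plan is to recognize Proposition~\ref{p3} as a version of the functional martingale central limit theorem and to prove it by the classical route of tightness together with identification of the limit points. First I would pin down the candidate limit. Since $\Phi$ is continuous and increasing and the predictable quadratic variation satisfies $\<M^n\>_0 = 0$ (so that $\Phi(0) = 0$), the process $M_t := W_{\Phi(t)}$, where $W$ is a standard Brownian motion, is a continuous centered Gaussian martingale with $\<M\>_t = \Phi(t)$ and covariance $\Phi(s \wedge t)$. Here I take $M^n_0 = 0$, as holds in the application to Dynkin martingales.

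Next I would establish tightness of $(M^n)$ in $\mc D([0,T], \bb R)$ via the Aldous--Rebolledo criterion. Doob's $\mc L^2$ inequality gives $\bb E[\sup_{t \leq T}(M_t^n)^2] \leq 4\, \bb E[\<M^n\>_T]$, and since $\<M^n\>_T \to \Phi(T)$ in probability with $\<M^n\>$ increasing, these second moments are uniformly bounded, which controls the marginals. The oscillation condition reduces, by Rebolledo's criterion, to controlling increments of the form $\<M^n\>_{\tau + h} - \<M^n\>_\tau$ along stopping times $\tau$; condition (ii) together with the uniform continuity of $\Phi$ on $[0,T]$ makes these uniformly small as $h \to 0$. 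The vanishing-jump condition (i) then guarantees that every subsequential limit is supported on continuous trajectories.

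The heart of the argument is the identification of the limit points. Let $M$ be any subsequential limit. I would pass to the limit in the two martingale identities $\bb E[(M_t^n - M_s^n)\,H] = 0$ and $\bb E[\big((M_t^n)^2 - \<M^n\>_t\big) - \big((M_s^n)^2 - \<M^n\>_s\big)\, H] = 0$, valid for every bounded continuous functional $H$ of the trajectory up to time $s$. The uniform $\mc L^2$ bound from the previous step furnishes the uniform integrability needed to interchange limit and expectation in the first identity, showing that $M$ is a martingale in its natural filtration. For the second identity I would work with the optional quadratic variation $[M^n]$, which is directly tied to $(M^n_t)^2$ via the martingale $(M^n_t)^2 - [M^n]_t$, and use condition (i) to replace $[M^n]$ by $\<M^n\>$ up to a vanishing error (the jumps being uniformly small and square-summable against the bounded $\<M^n\>_T$). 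This identifies $M$ as a continuous martingale with $\<M\>_t = \Phi(t)$.

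Finally, uniqueness of the limit follows from the Dambis--Dubins--Schwarz theorem: a continuous martingale started at $0$ with \emph{deterministic} quadratic variation $\Phi$ is a time-changed Brownian motion $W_{\Phi(\cdot)}$, whose law is completely determined. Hence all subsequential limits coincide with $M_t = W_{\Phi(t)}$, and the full sequence converges in law in the $J_1$-topology. The main obstacle I anticipate is precisely the passage to the limit in the quadratic-variation identity: with no moment bound stronger than $\mc L^2$, the square $(M_t^n)^2$ need not be uniformly integrable, so the delicate point is to secure just enough integrability (via a truncation argument, or by upgrading to a uniform $\mc L^{2+\delta}$ bound on $\sup_t M_t^n$) and to exploit condition (i) to guarantee that $\lim_n \<M^n\>_t$ is genuinely the quadratic variation $\<M\>_t$ of the limit rather than merely the deterministic function $\Phi(t)$.
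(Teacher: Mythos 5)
You should first be aware that the paper does not actually prove Proposition \ref{p3}: its ``proof'' consists of the remark that the statement is a restatement of Theorem VIII.3.11 of \cite{JacShi}. Your proposal is therefore attempting more than the paper does, and the architecture you chose (Aldous--Rebolledo tightness, identification of limit points via the two martingale problems, uniqueness via Dambis--Dubins--Schwarz) is the standard direct proof of this martingale functional CLT and is viable in principle.

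As written, however, there is a genuine gap, and it occurs earlier than the step you flag as delicate. You deduce from hypothesis (ii) that the expectations $\bb E[\<M^n\>_T]$ are uniformly bounded, and you then use this twice: through Doob's inequality to control the marginals for tightness, and as the ``uniform $\mc L^2$ bound'' that furnishes the uniform integrability needed to pass to the limit in $\bb E[(M^n_t-M^n_s)H]=0$. But convergence in probability gives no control on expectations: a sequence with $\<M^n\>_T=\Phi(T)$ with probability $1-\frac{1}{n}$ and $\<M^n\>_T=n^2$ with probability $\frac{1}{n}$ satisfies (ii) while $\bb E[\<M^n\>_T]\to\infty$, and this is realizable by honest $\mc L^2$-martingales (run a Brownian motion at an enormous rate on an event of probability $\frac{1}{n}$). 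The correct repair is localization, not a moment upgrade: set $\tau_n:=\inf\{t\colon \<M^n\>_t\geq \Phi(T)+1\}$, note that (ii) forces $\bb P(\tau_n\leq T)\to 0$, so that $M^n$ and the stopped martingale $M^{n,\tau_n}$ have the same weak limit points, and then run the whole argument for $M^{n,\tau_n}$, whose bracket is bounded by $\Phi(T)+1$ plus the jump of $\<M^n\>$ at $\tau_n$; controlling that jump is exactly where condition (i) (after a preliminary truncation of the large jumps of $M^n$) must be used a second time. Your alternative suggestion of ``upgrading to a uniform $\mc L^{2+\delta}$ bound'' cannot succeed, since no such bound follows from (i) and (ii). Once the localization is in place, the remainder of your outline (uniform integrability of $M^n_t$, replacement of $[M^n]$ by $\<M^n\>$ via the small-jump hypothesis, and the DDS identification of the limit law as $W_{\Phi(\cdot)}$, which also requires $M^n_0=0$ as you note) is sound.
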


This  proposition is a restatement of Theorem VIII.3.11 of \cite{JacShi}. Observe that in this proposition, the time window $[0,T]$ is fixed, while in our situation $T_n$ grows with $n$. We will overcome this difficulty in the following way. Let $T >0$ be fixed and assume without loss of generality that $T_n \geq T$. Define $(M_t^n; t \in [0,T])$ as
\[
M_t^n := \mc M_{T_n-T+t,T_n}^n(f)
\]
for every $t \in [0,T]$. Observe that $(M_t^n; t \in [0,T])$ is a martingale. Its quadratic variation is equal to 
\[
\<M^n\>_t = \int_0^t \Gamma_n X^n_{T_n-T+s}(g_{T_n-T+s,T_n}) ds.
\]
Recall \eqref{quillota} and recall the definition of $G(\rho)$ given in \eqref{atacama}. For every $g: \bb T_n^d \to \bb R$, let us define
\[
\varphi_n(g) := \frac{2\chi(\rast)}{n^d} \sumxy n^2 \big( g(y)-g(x)\big)^2 + \frac{G(\rast)}{n^d} \sumx g(x)^2.
\]
By Theorem \ref{t4}, we have that
\[
\bb E^n_{\muss} \big[ \big( \Gamma_n X_{T_n\!-T+s}^n(g_{T_n\!-T+s,T_n\!}) - \varphi_n(g_{T_n\!-T+s,T_n\!}) \big)^2 \big] 
	\!\leq\! \frac{C g_d(n)( \|g_{T_n\!-T+s,T_n\!}\|_\infty^2\! +\! \|\nabla g_{T_n\!-T+s,T_n\!}\|_\infty^2)}{n^2}.
\]
For simplicity we have estimated the $\ell_n^2$ bounds of the functions appearing in \eqref{quillota} by their $\ell^\infty$ bounds; we are only missing a multiplicative constant that does not affect the computations. Observe that
\[
\lim_{n \to \infty} \varphi_n(g_{T_n-T+s,T_n}) =: \widetilde{\varphi}_{T-s}(f) = 2 \chi(\rast)\! \int \!\| \nabla P_{T-s} f(x)\|^2 dx + G(\rast) \!\int \! P_{T-s} f(x)^2 dx.
\]
By Taylor's formula and the trapezoidal rule,
\[
\begin{split}
\Big| \frac{1}{n^d} \sumxy n^2 \big(g_{T_n-T+s,T_n}\big(\tfrac{y}{n}\big) -
		& g_{T_n-T+s,T_n}\big(\tfrac{x}{n}\big)\big)^2 - \int \|\nabla P_{T-s}f(x)\|^2 dx\Big| \leq\\
		&\leq \frac{C}{n^2} \big( \|D P_{T-s}f\|_\infty \|D^3 P_{T-s}f\|_\infty + \|D^2 P_{T-s}f\|_\infty^2\big)\\
		&\leq \frac{C}{n^2} \big( \|D f\|_\infty \|D^3 f\|_\infty + \|D f\|_\infty^2 \big)
\end{split}
\]
and
\[
\begin{split}
\Big|\frac{1}{n^d} \sumxy g_{T_n-T+s,T_n}\big(\tfrac{x}{n}\big)^2 - \int P_{T-s}f(x)^2 dx \Big| 
		&\leq \frac{C}{n^2} \big( \|P_{T-s}f\|_\infty \|D^2P_{T-s}f\|_\infty + \| D P_{T-s}f \|_\infty^2 \big)\\
		&\leq \frac{C}{n^2} \big( \|f\|_\infty \|D^2 f\|_\infty + \| D f \|_\infty^2 \big).\\ 
\end{split}
\]
Since these estimates are uniform in $s$, we conclude that
\[
\lim_{n \to \infty} \< M^n\>_t = \int_0^t \widetilde{\varphi}_{T-s}(f) ds
\]
in $\mc L^2(\bb P_{\muss}^n)$, and condition ii) of Proposition \ref{p3} is satisfied for 
\[
\Phi(t) = \Phi_t(f) := \int_0^t \widetilde{\varphi}_{T-s}(f) ds.
\]
The jumps of $(M_t^n; t \in [0,T])_{n\in\mathbb N}$ at time $t \in [0,T]$ have size at most
\[
n^{-d/2} \|g_{T_n-T+t,T_n}\|_\infty \leq n^{-d/2} \|f\|_\infty,
\]
and in particular condition i) of Proposition \ref{p3} is satisfied. We conclude that $(M_t^n; t \geq 0)_{n\in\mathbb N}$ converges in law to a continuous martingale of quadratic variation $\{\Phi(t); t \in [0,T]\}$. In particular, the limit in law
\[
\lim_{n \to \infty} \big( \mc M_{T_n,T_n}^n(f) - \mc M_{T_n-T,T_n}^n (f)\big) = \lim_{n \to \infty} \big(M_T^n - M_0^n \big)
\]
exists and it is equal to $\mc N(0,\Phi_T(f))$.
Now let us estimate the variance of $\mc M_{T_n-T,T_n}^n(f)$. We have that
\[
\begin{split}
\bb E^n_{\muss} \big[ \mc M_{T_n-T,T_n}(f)^2\big] 
		&= \bb E^n_{\muss} \big[ \< \mc M_{T_n-T,T_n}^n(f)\> \big]
		= \int_0^{T_n-T} \bb E^n_\muss \big[ \Gamma_n X_s^n(g_{s,T_n})\big] ds\\
		&\leq \int_0^{T_n-T} \!\!\!\!\! C \big( \|g_{s,T_n}\|_\infty^2 + \|\nabla g_{s,T_n} \|_\infty^2 \big) ds\\
		&\leq \int_0^{T_n-T} C \big( \|f\|_\infty^2 + \|\nabla f\|_\infty^2\big) e^{2 F'(\rast) (T_n-s)} ds\\
		&\leq  C \big( \|f\|_\infty^2 + \|\nabla f\|_\infty^2\big) e^{2 F'(\rast) T} .
\end{split}
\]

Since $F'(\rast) <0$, the last line of this estimate converges to $0$ as $T \to \infty$, uniformly in $n$. Therefore, we conclude that
\[
\lim_{n \to \infty} \mc M_{T_n,T_n}^n(f) = \mc N(0, \Phi_\infty(f)),
\]
where
\[
\Phi_\infty(f) := 2 \chi(\rast) \int_0^\infty \int \| \nabla P_t f(x)\|^2 dx dt + G(\rast) \int_0^\infty \int P_tf(x)^2 dx dt.
\]
Observe that $\Phi_\infty(f)$ is also given by \eqref{sanfelipe}.
Now we are ready to complete the proof of Theorem \ref{t3}.
By Wald's device, the law of $X_\infty$ is characterized by the laws of $X_\infty(f)$ for $f \in \mc C^\infty(\bb T^d)$. Let $\widetilde{X}$ be a limit point of $(X^n; n \in \bb N)$, which exists by tightness, and let $f \in \mc C^\infty(\bb T^d)$. We have just proved that $\widetilde{X}(f)$ has the same law of $X_\infty(f)$. We conclude that $\widetilde{X} = X_\infty$, and in particular $(X^n; n \in \bb N)$ has a unique limit point, which finishes the proof of Theorem \ref{t3}.

\section{Discussion}

We have proved that the CLT fluctuations of the density of particles under the NESS of an example of a driven-diffusive model are given by a Gaussian process presenting non-local correlations. For the sake of clarity, we have presented the proof for one of the simplest driven-diffusive models for which a non-trivial limit appears, that is, for a reaction-diffusion model with quadratic interactions. The restriction to quadratic interactions is not essential; however the restrictions to small parameters $\lambda$ and dimensions $d \leq 3$ could not be removed without new arguments. 

\subsection*{General reaction rates}

For $\lambda \geq 0$, the reaction rates $c_x$ can be written as $c_x = h_x + \lambda r_x$, where 
\[
h_x(\eta) := a (1-\eta_x) + b \eta_x
\]
can be interpreted as a chemical reservoir of particles at density $\frac{a}{a+b}$, and
\[
r_x(\eta) = \frac{1}{2d} \sum_{y \sim x} \eta_y(1-\eta_x)
\]
can be interpreted as the interaction rate of some chemical reaction. The parameter $\lambda$ corresponds to the intensity of the reaction, and our smallness condition in $\lambda$ corresponds to assuming that the strength of the chemical reaction is uniformly bounded in $n$, with respect to the strength of the chemical bath, by some constant depending only on the density of the bath. For $\lambda \in (-a,0)$ some adjustments are needed, but a similar interpretation is possible. Our results hold for general local reaction rates $r_x$, but always under the boundedness assumption described above.

\subsection*{MFT and density fluctuations} The correlation operator of the limiting fluctuation density field can be described in terms of thermodynamic functions appearing in MFT. The MFT for reaction-diffusion models like the one presented here has been described in great generality in \cite{LanTsu}. One starts from the hydrodynamic equation
\[
\partial_t \rho = \nabla \cdot \big( D(\rho) \nabla \rho \big) + A(\rho) - B(\rho),
\]
where $D(\rho)$ is the diffusivity of the diffusion dynamics, equal to $1$ in our case,  $A(\rho) := \int c_x(\eta) (1-\eta_x) d \nu_\rho^n$ and $B(\rho) := \int c_x(\eta) \eta_x d \nu_\rho^n$. Then, under the invariant measure $\muss$, the density of particles is concentrated on stable solutions of the elliptic equation 
\begin{equation}
\label{elli}
\nabla \cdot \big( D(\rho) \nabla \rho \big) + A(\rho) - B(\rho) =0.
\end{equation}
Observe that in our notation, $F = A-B$ and $G =A+B$. If $F$ has a unique zero $\rast$ on the interval $[0,1]$, then the elliptic equation \eqref{elli} has a unique solution, which is the constant solution equal to $\rast$. In \cite{LanTsu}, large deviations principles were proved for the hydrodynamic and hydrostatic limit of the density of particles of the reaction-diffusion model under the assumption $\lambda \geq 0$. At a formal level, the correlation operator of the CLT for the density of particles corresponds to the Hessian of the rate function at its unique zero. In our infinite-dimensional context, such expansion is difficult to justify, but \emph{a posteriori} it provides the right answer. In \cite{JarMen} it has been shown that the fluctuations around the hydrodynamic limit follow the stochastic heat equation
\begin{equation}
\tag{SHE}
\label{SHE}
\begin{split}
\partial_t X 
		&= \nabla \cdot \big( D(\rast) \nabla X + \sqrt{\chi(\rast)}  \dot{\mc W}^1\big) + F'(\rast) X + \sqrt{G(\rast)} \dot{\mc W}^2,
\end{split}
\end{equation}
where $\dot{\mc W}^1$ is a space-time white noise with values in $\bb R^d$, $\dot{\mc W}^2$ is a space-time white noise independent of $\dot{\mc W}^1$ and $\chi(\rast)  = \rast(1-\rast)$ is the \emph{mobility} of the model. The density fluctuations around the hydrostatic limit are given by the stationary solution of this equation. The stationary solution of \eqref{SHE} can be obtained from \emph{Duhamel's formula}:
\[
X_\infty = \int_0^\infty \sqrt{2\chi(\rast)} \dot{\mc W}^1 \cdot \nabla P_t + \int_0^\infty \sqrt{G(\rast)} \dot{\mc W}^2 P_t,
\]
where $P_t := e^{(D(\rast) \Delta + F'(\rast))t}$. Therefore, $X_\infty$ is a centered, Gaussian process satisfying
\[
\bb E[X_\infty(f)^2] = \int_0^\infty \big( \chi(\rast) \|\nabla P_t f\|^2 + G(\rast) \| P_t f\|^2\big) dt.
\]
Observe that 
\[
\begin{split}
\int_0^\infty \| \nabla P_t f\|^2 dt 
		&= \int_0^\infty \< P_t f , - \Delta P_t f\> dt = \int_0^\infty \big( -\tfrac{1}{2} \partial_t \<P_t f ,P_t f\> + F'(\rast) \<P_t f, P_t f\>\big) dt \\
		&= \tfrac{1}{2} \|f\|^2 + F'(\rast) \int_0^\infty \|P_t f\|^2 dt.
\end{split}
\]
In particular, 
\[
\bb E[X_\infty(f)^2] = \chi(\rast)\|f\|^2 + (G(\rast) + 2 \chi(\rast) F'(\rast))\int_0^\infty \|P_t f\|^2 dt.
\]
One can check that $\lambda$ and $G(\rast) + 2 \chi(\rast) F'(\rast)$ have the same sign. Therefore, the sign of $\lambda$ indicates whether $X_\infty$ has positive or negative correlations. We observe that for $\lambda >0$, one can interpret $X_\infty$ as the sum of a white noise and an independent \emph{massive Gaussian free field}. For $\lambda <0$, it turns out that the sum of $X_\infty$ and an independent massive free field is equal to a white noise. 

\subsection*{Local equilibrium}

Since diffusion has a diffusive scaling and reaction does not have scaling, at small scales the effect of the diffusive dynamics is stronger than the effect of the reaction dynamics. Therefore, it is reasonable to expect the reaction-diffusion model to be in \emph{local equilibrium}, meaning that at small scales the law of the density of particles should look like the invariant law of the diffusive dynamics. In our case, this corresponds to a product Bernoulli measure with constant density. In Theorem \ref{loceq}, we have proved that this is indeed the case for the NESS, and we have established a mesoscopic scale up to which local equilibrium holds. In dimension $d=1$, we have shown that non-equilibrium behavior only appears at macroscopic scales, since at every mesoscopic scale, the NESS is statistically indistinguishable from a product Bernoulli measure with density $\rast$. We conjecture that Theorem \ref{loceq} is \emph{sharp}, that is, the scale $R_n$ is the largest scale at which one should expect the NESS to behave as a local equilibrium. Observe that for $d \geq 3$ the covariance operator of the massive free field has a singularity of order $\|y-x\|^{2-d}$ at the diagonal ($\log \|y-x\|$ for $d=2$), which is compatible with the mesoscopic scale $R_n$ stated in Theorem \ref{loceq}. 

Local equilibrium is one of the axioms from which MFT is derived \cite{BerD-SGabJ-LLan2}. Although the validity of MFT has been proved for a range of models, we are not aware of any proof of Theorem \ref{loceq}. Therefore, Theorem \ref{loceq} can be seen as the mathematical basis of the derivation of MFT from first principles.

\subsection*{Absolute continuity of fluctuations with respect to white noise}

In this section we prove that the limit fluctuations given by $X_\infty$ are absolutely continuous with respect to a white noise. Recall that our results are for $d\leq 3$. 
 Let $(\zeta_{k,i}; k \in \bb Z^d, i =1,2)$ be a sequence of i.i.d.~random variables with common law $\mc N(0,1)$ and define $(\xi_k; k \in \bb Z^d)$ as follows. For $k =0$,  $\xi_0=\zeta_{0,1}$, and for $k \in \bb Z^d$ with $k_1 >0$, 
\[
\xi_k := \frac{\zeta_{k,1} + i \zeta_{k,2}}{\sqrt 2} \text{ and }
\xi_{-k} := \frac{\zeta_{k,1} - i \zeta_{k,2}}{\sqrt 2}.
\]
From \eqref{sanfelipe}, $X_\infty$ has the representation
\[
X_\infty(f) = \sum_{k \in \bb Z^d}  \hat{f}(k) \xi_{k}  \sqrt{\lambda_k},
\]
where 
\[
\lambda_k := \frac{4 \pi^2 k^2\chi(\rast)}{4 \pi^2 k^2-F'(\rast)} + \frac{G(\rast)}{8 \pi^2 k^2 -2 F'(\rast)}.
\]
The white noise of variance $\chi(\rast)$ has the representation
\[
Y(f) = \sum_{k \in \bb Z^d}  \hat{f}(k) \xi_{k} \sqrt{\chi(\rast)}.
\]
In other words, in Fourier space, both processes $X_\infty$ and $Y$ are represented by tensor products of independent Gaussians. It will be convenient to rewrite the expressions for $X_\infty$ and $Y$ as
\[
X_\infty(f) := \hat{f}(0) \zeta_{0,1} \sqrt{\lambda_0} + \sum_{k_1 >0} \sqrt{2} \big( \Re ( \hat{f}(k)) \zeta_{k,1} + \Im (\hat{f}(k)) \zeta_{k,2} \big) \sqrt{\lambda_k},
\]
\[
Y(f) := \hat{f}(0) \zeta_{0,1} + \sum_{k_1 >0} \sqrt{2} \big( \Re ( \hat{f}(k)) \zeta_{k,1} + \Im (\hat{f}(k)) \zeta_{k,2} \big).
\]

The relative entropy between two centered Gaussians satisfies the formula
\[
H(\mc N(0,\sigma_2^2) | \mc N(0,\sigma_2^1)) = \Xi\Big( \frac{\sigma_2^2}{\sigma_1^2}\Big),
\]
where $\Xi(r) := \frac{1}{2} (r - \log r -1)$. In particular, the relative entropy is of order $\frac{1}{4} (\frac{\sigma_2^2}{\sigma_1^2}-1)^2$ as $\frac{\sigma_2^2}{\sigma_1^2} \to 1$, and therefore for every $M >0$ there exists a finite constant $C_M$ such that 
\[
H(\mc N(0,\sigma_2^2) | \mc N(0,\sigma_1^2)) \leq C_M \Big(\frac{\sigma_2^2}{\sigma_1^2}-1\Big)^2,
\]
whenever $ \frac{\sigma_2}{\sigma_1} \in [\frac{1}{M}, M]$. Recall \eqref{llullaillaco}. Therefore, there exists $\widetilde M$ finite such that $$\frac{\lambda_k}{\chi(\rast)} \in [\widetilde M^{-1}, \widetilde M]$$ for every $k \in \bb Z^d$. By the tensorization property of the relative entropy, the relative entropy of the law of $X_\infty$ with respect to the law of $Y$ is equal to 
\[
\sum_{k \in \bb Z^d} \Xi\Big(\frac{\lambda_k}{\chi(\rast)}\Big), 
\]
which turns to be bounded by
\[
\sum_{k \in \bb Z^d} C_{ \widetilde M}\Big( \frac{\lambda_k}{\chi(\rast)} -1 \Big)^2.
\]
Observe that there exists a constant $C$ such that
\[
\Big| \frac{\lambda_k}{\chi(\rast)} -1 \Big| \leq \frac{C}{1+ \|k\|^2}.
\]
Therefore, the sum 
\[
\sum_{k \in \bb Z^d} \Big( \frac{\lambda_k}{\chi(\rast)} -1 \Big)^2
\]
is finite for $d <4$, from where we conclude that $X_\infty$ has a density with respect to $Y$, as we wanted to show.

\appendix

\section{Lemmas}

The following estimate is known as \emph{Hoeffding's lemma}:

\begin{lemma}
 \label{Hoeff}
Let $X$ be a random variable such that $a\leq X \leq b$ a.s. For every $\theta
\in \bb R$,
\[
\log \bb E[e^{\theta (X - \bb E[X])}] \leq \tfrac{1}{8}(b-a)^2 \theta^2.
\]
\end{lemma}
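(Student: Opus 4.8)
The plan is to prove this classical inequality by analysing the cumulant generating function and bounding its second derivative by the variance of a bounded random variable. First I would reduce to the centered case: replacing $X$ by $X - \bb E[X]$ leaves the interval length $b-a$ unchanged while turning the exponent into $\theta$ times a mean-zero variable, so without loss of generality I assume $\bb E[X] = 0$ and, in particular, $a \le 0 \le b$.

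Next, I set $\psi(\theta) := \log \bb E[e^{\theta X}]$, which is finite and smooth in $\theta$ because $X$ is bounded. Differentiating under the expectation, I would record $\psi(0) = 0$, $\psi'(0) = \bb E[X] = 0$, and
\[
\psi''(\theta) = \frac{\bb E[X^2 e^{\theta X}]}{\bb E[e^{\theta X}]} - \Big( \frac{\bb E[X e^{\theta X}]}{\bb E[e^{\theta X}]} \Big)^2,
\]
which is precisely the variance of $X$ under the tilted probability measure $d \bb Q_\theta := \big( e^{\theta X} / \bb E[e^{\theta X}] \big) \, d \bb P$. Since $X$ still takes values in $[a,b]$ under $\bb Q_\theta$, the heart of the argument is a uniform bound on this variance.

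The key estimate is $\psi''(\theta) \le (b-a)^2 / 4$ for every $\theta$. This rests on the elementary observation that any random variable $Z$ with $a \le Z \le b$ satisfies $\Var(Z) \le \bb E\big[(Z - \tfrac{a+b}{2})^2\big] \le (b-a)^2/4$, where the first inequality uses that the variance is the minimal mean-square deviation and the second uses $|Z - \tfrac{a+b}{2}| \le (b-a)/2$ pointwise. Applying this with $Z = X$ under $\bb Q_\theta$ gives the bound.

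Finally, a second-order Taylor expansion of $\psi$ about the origin, using $\psi(0) = \psi'(0) = 0$, yields $\psi(\theta) = \tfrac12 \psi''(\xi) \theta^2 \le \tfrac18 (b-a)^2 \theta^2$ for some $\xi$ between $0$ and $\theta$, which is exactly the assertion. The only genuine (and mild) obstacle is the variance bound; the differentiation of $\psi$ and the passage to the tilted measure are routine, as is the concluding Taylor argument. An alternative route, avoiding tilting, would bound $\bb E[e^{\theta X}]$ directly by convexity of $x \mapsto e^{\theta x}$ on $[a,b]$ and then optimize the resulting one-variable expression, but the tilting argument is shorter to write.
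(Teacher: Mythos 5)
Your proof is correct and complete: the reduction to the centered case, the identification of $\psi''(\theta)$ as the variance of $X$ under the tilted measure $\bb Q_\theta$ (under which $X$ still lies in $[a,b]$ by absolute continuity), the bound $\Var(Z)\leq (b-a)^2/4$ for bounded $Z$, and the Taylor expansion with $\psi(0)=\psi'(0)=0$ together give exactly the stated inequality. Note that the paper itself offers no proof of this lemma, stating it as the classical Hoeffding lemma, so there is nothing to compare against; your argument is the standard cumulant-generating-function proof and would serve as a valid proof here.
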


The following lemma states that random variables satisfying the bound in
Hoeffding's lemma are subgaussian:

\begin{lemma}
 \label{subG}
Let $X$ be a real-valued random variable such that $\log \bb E[e^{\theta X}]
\leq \frac{1}{2}\sigma^2\theta^2$ for every $\theta \in \bb R$. We have that
\[
\bb E [e^{\gamma X^2}] \leq \frac{1}{\sqrt{1- 2\sigma^2 \gamma}}
\]
for every $\gamma < \frac{1}{2\sigma^2}$.%
\end{lemma}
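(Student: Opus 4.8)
The plan is to reduce the quadratic exponential moment $\bb E[e^{\gamma X^2}]$ to a linear one, which the hypothesis controls directly, by introducing an auxiliary Gaussian variable and integrating it out at the end. First I would let $Z \sim \mc N(0,1)$ be a standard Gaussian random variable independent of $X$, and recall the elementary identity $\bb E[e^{tZ}] = e^{t^2/2}$, valid for every $t \in \bb R$. Assuming $\gamma > 0$ (the case $\gamma \leq 0$ is trivial, since then $e^{\gamma X^2} \leq 1$) and applying this identity with $t = \sqrt{2\gamma}\, x$ yields the pointwise representation
\[
e^{\gamma x^2} = \bb E_Z\big[ e^{\sqrt{2\gamma}\, x Z}\big]
\]
for every $x \in \bb R$.

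Next I would integrate this identity with respect to the law of $X$ and interchange the order of integration, obtaining
\[
\bb E[e^{\gamma X^2}] = \bb E_Z \, \bb E_X\big[ e^{\sqrt{2\gamma}\, Z X}\big].
\]
Conditionally on $Z = z$, the inner expectation is precisely the moment generating function of $X$ evaluated at $\theta = \sqrt{2\gamma}\, z \in \bb R$. The subgaussian hypothesis then applies verbatim and gives $\bb E_X[e^{\sqrt{2\gamma}\, z X}] \leq e^{\frac{1}{2}\sigma^2 (2\gamma) z^2} = e^{\sigma^2 \gamma z^2}$, whence
\[
\bb E[e^{\gamma X^2}] \leq \bb E_Z\big[ e^{\sigma^2 \gamma Z^2}\big].
\]

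Finally I would evaluate the remaining Gaussian integral explicitly. Completing the square in the standard Gaussian density shows that $\bb E[e^{a Z^2}] = (1-2a)^{-1/2}$ whenever $a < 1/2$; applying this with $a = \sigma^2 \gamma$, and noting that the condition $a < 1/2$ is exactly $\gamma < \frac{1}{2\sigma^2}$, produces the claimed bound $(1-2\sigma^2\gamma)^{-1/2}$.

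The only step requiring any care is the interchange of the two expectations, but this is harmless in the present setting: the integrand $e^{\sqrt{2\gamma}\, z x}$ is nonnegative, so Tonelli's theorem licenses the exchange with no a priori integrability assumption, and the finiteness of the resulting bound confirms \emph{a posteriori} that every quantity involved is finite. Thus I do not expect a genuine obstacle here; the argument is a clean decoupling of the quadratic form through the Gaussian moment generating function.
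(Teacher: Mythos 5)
Your proof is correct and follows essentially the same route as the paper's: introducing an independent standard Gaussian $Z$ to linearize the quadratic exponent via $e^{\gamma X^2} = \bb E_Z[e^{\sqrt{2\gamma}XZ}]$, applying the subgaussian hypothesis conditionally on $Z$, and evaluating the resulting Gaussian integral. Your version is in fact slightly more careful than the paper's one-line proof, which justifies neither the interchange of expectations nor the trivial case $\gamma \leq 0$, and which contains a typo ($\sigma^2\theta Z^2$ in place of $\sigma^2\gamma Z^2$).
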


\begin{proof}
 Let $Z$ be a Gaussian random variable of mean zero and unit variance,
independent of $X$. We have that
\[
\bb E[e^{\gamma X^2}] = \bb E[ e^{\sqrt{2 \gamma} X Z}] \leq \bb E[e^{\sigma^2
\theta Z^2}] = \frac{1}{\sqrt{1-2\sigma^2 \gamma}}.
\]
\end{proof}

\thanks{ {\bf{Acknowledgements:}}
P.G. thanks  FCT/Portugal for financial support
through CAMGSD, IST-ID, projects UIDB/04459/2020 and UIDP/04459/2020.  M.J.~has been funded by CNPq grant 312146/2021-3, CNPq grant 201384/2020-5 and FAPERJ grant E-26/201.031/2022. M.J.~was supported in part by funding from the Simons Foundation and the Centre de Recherches Math\' ematiques, through the Simons-CRM scr-in-residence program and by the Mathematisches Forschungsinstitut Oberwolfach. This project has received funding from the European Research Council (ERC) under  the European Union's Horizon 2020 research and innovative programme (grant agreement  n.~715734). R.M.~thanks the National Council for Scientific and Technological Development (CNPq).
Data sharing not applicable to this article as no datasets were generated or analysed during the current study.}

\bibliographystyle{plain}

\end{document}